\newcommand{\eqq}[2]{\begin{equation}  #1  \label{#2} \end{equation}    }
\newcommand*{\norm}[1]{\left\Vert{#1}\right\Vert}
\newcommand*{\abs}[1]{\left\vert{#1}\right\vert}
\newcommand*{\izj}{\int_{0}^{1}}
\newcommand*{\p}{\partial}
\newcommand*{\poch}{\frac{\p}{\p x}}
\newcommand*{\ia}{I^{\alpha}}
\newcommand*{\al}{\alpha}
\newcommand*{\uz}{u_{0}}
\newcommand*{\tj}{t_{1}}
\newcommand*{\vf}{\varphi}
\newcommand*{\ve}{\varepsilon}
\newcommand{\podst}[2]{\left\{ \begin{array}{ll} #1 \\  #2 \\  \end{array} \right\}}
\newcommand{\hd}{\hspace{0.2cm}}
\newcommand{\no}{\noindent}
\newcommand{\m}[1]{\mbox{#1}}
\newtheorem{rem}{{\textbf {Remark}}}
\newtheorem{lem}{{\textbf {Lemma}}}
\newtheorem{prop}{{\textbf {Proposition}}}
\newtheorem{theorem}{\textbf {Theorem}}
\newtheorem{coro}{\textbf  {Corollary} }
\newtheorem{de}{\textbf  {Definition} }
\newcommand{\izx}{\int_{0}^{x}}
\newcommand{\da}{D^{\alpha}}
\newcommand{\izt}{\int_{0}^{t}}
\newcommand{\ld}{L^{2}(0,1)}
\newcommand{\jgja}{\frac{1}{\Gamma(1-\al)}}
\newcommand{\ca}{c_{\al}}
\newcommand{\dd}{\mathcal{D}_{\al}}
\newcommand{\xj}{x_{1}}
\newcommand{\tz}{t_{0}}
\newcommand{\sj}{s_{1}}
\newcommand{\sd}{s_{2}}
\newcommand{\sde}{s_{\delta}}
\newcommand{\uj}{u^{1}}
\newcommand{\ude}{u^{\delta}}
\newcommand{\ud}{u^{2}}
\newcommand{\uzj}{u_{0}^{1}}
\newcommand{\uzd}{u_{0}^{2}}
\newcommand{\uzde}{\uz^{\delta}}
\begin{document}
\title{\bf A space - fractional Stefan problem}

\author{Katarzyna Ryszewska\\
\small{Department of Mathematics and Information Sciences, Warsaw University of Technology,} \\
\small{Koszykowa 75, 00-662 Warsaw,
Poland,}\\
\small{E-mail address: K.Ryszewska@mini.pw.edu.pl}}

\date{}

\maketitle

\abstract{We study a space-fractional Stefan problem, where the non-local diffusion flux is modeled by the Caputo derivative. We obtain the unique existence of classical solution to this problem.}
\vspace{0.4cm}

\no Keywords: fractional Stefan problem; Caputo derivative; analytic semigroup theory.
\vspace{0.2cm}

\no AMS subject classifications (2010):  35R37, 35R11.
\section{Introduction}
In this paper we study the fractional in space, one-phase one-dimensional Stefan problem. The classical Stefan model describes the process of changing the phase in homogenous medium, for example, the ice melting.

 Recently, the diffusion in non-homogenous media is  frequently described by the models involving the fractional derivatives. In the case of phenomena exhibiting memory effects, the anomalous super-diffusion may be modeled with a use of time-fractional derivatives. There exists a number of papers which provide partial results concerning Stefan problem where the time derivative is replaced by the fractional Caputo derivative. In \cite{KRR} the existence of weak solutions in non cylindrical domain with fixed boundary was shown. This problem was also considered in papers \cite{Hopf},  \cite{Ros} and  \cite{exa1}. In the first one, under suitable regularity assumptions the Hopf lemma was proven, while in the second and third one the authors obtained some exact solutions.
 The issue of establishing the appropriate time-fractional Stefan model appears to be challenging itself. In the paper \cite{Vzl} the author presented the idea of representing the difussive flux in the form of the Riemann-Liouville time-fractional derivative of the spatial gradient of solution. Recently, several papers based on this proposal where published. In \cite{Cere}, a few Stefan models concerning sharp interphase as well as diffusive interphase were proposed. Further discussion was made in \cite{Rosa} - \cite{Rosd} where the authors consider the issue of proper formulation of the model and the existence of special solution.

This paper is devoted to the moving boundary problem with a non-locality in space. The idea of modelling anomalous sub-diffusion with a use of space-fractional operators was introduced in \cite{V2}. In this paper, the author considered the model of infiltration of water into heterogeneous soil. In the homogenous media, there is a well known constitutive relation that associates the diffusive flux with a gradient of solution. In \cite{V2} the author presented a model where the diffusive flux takes the form of the fractional Caputo derivative.

In this paper we will present the mathematical analysis of the following free boundary problem
\begin{equation}\label{Stefan}
 \left\{ \begin{array}{ll}
u_{t} - \poch \da u = 0 & \textrm{ in } \{(x,t):0<x<s(t), 0<t<T\}=: Q_{s,T}, \\
u_{x}(0,t) =0, \ \ u(t,s(t)) = 0 & \textrm{ for  } t \in (0,T), \\
u(x,0) = u_{0}(x) & \textrm{ for } 0<x<s(0)=b, \\
\dot{s}(t) = -(\da u)(s(t),t) & \textrm{ for  } t \in (0,T),
\end{array} \right. \end{equation}
where $\da$ denotes the fractional Caputo derivative with respect to the spatial variable, that is
\[
\da u(x,t) = \jgja \poch \izx (x-p)^{-\al} [u(p,t)-u(0,t)]dp.
\]
The above model was proposed in \cite{V1}. The physical motivation to study (\ref{Stefan}) originates from anomalous sub-diffusion processes such as mass transport or solidification of substances in a complex media.
We note that, here $u$ is a function of two variables and can be regarded as a temperature of medium or a density of transported substance, while $s:[0,T]\rightarrow \mathbb{R}$  is a moving part of the boundary of the domain. We assume that the constants $T$ and $b$ are positive and fixed and $\uz$ is given nonnegative initial condition. We note that here we impose the zero Neumann condition at the left boundary.

The main results of this paper (established in Theorem \ref{final} and Corollary 4) claim that if we assume that the initial value $\uz$ is regular enough, then the system (\ref{Stefan}) has exactly one classical solution.
The proof partly relies on the recent result obtained in~\cite{ja}, where the unique existence of the solution to the linear initial-value-boundary problem
\begin{equation}\label{aa}
 \left\{ \begin{array}{ll}
u_{t} - \poch \da u = f & \textrm{ in } (0,1) \times (0,T),\\
u_{x}(0,t) = 0, \ \ u(1,t) = 0 & \textrm{ for  } t \in (0,T), \\
u(x,0) = u_{0}(x) & \textrm{ in } (0,1) \\
\end{array} \right. \end{equation}
was proven. Precisely, there was shown that if we consider the operator $\poch \da$ on the appropriate domain, then it is a generator of an analytic semigroup.

Our approach follows the standard methods for solving the classical Stefan problem, presented in \cite{Andreucci}. First of all, we focus our attention on the problem considered in a non cylindrical domain with a given function $s$. We apply the transformation into the cylindrical domain and we find a regular solution by means of the abstract evolution operator theory. Then, we prove the weak extremum principle and the space-fractional version of Hopf lemma, i.e. $(\da u)(s(t),t) < 0 \hd \forall \hspace{1 mm} t \in (0,T]$. Finally, by the Schauder fixed point theorem, we are able to obtain the existence of a pair $(u,s)$ which is a classical solution to (\ref{Stefan}). At last, we prove the monotone dependence upon data in order to obtain the uniqueness of the solution.

The paper is organized as follows. For the readers convenience, in  Chapter~2, we will briefly recall selected theorems from fractional calculus and the results from~\cite{ja} which play a significant role in the further part of the paper. Chapter 3 is devoted to the non cylindrical linear problem with a given boundary. At last, in Chapter~4, we state the extremum principles and the proof of the main result.

\section{Preliminary results}
We begin with recalling the definitions of fractional operators. For an introduction to theory of fractional  operators we refer to \cite{Kilbas} and \cite{Samko}.
\begin{de}\label{fracdef}
Let $L, \al > 0$. For $f \in L^{1}(0,L)$ we introduce the fractional integral $\ia$ by the formula
\[
\ia f (x) = \frac{1}{\Gamma(\al)}\izx (x-p)^{\al-1}f(p)dp.
\]
For $\al \in (0,1)$ and $f$ regular enough, we define the Riemann-Liouville fractional derivative
\[
\p^{\al}f(x) =  \poch I^{1-\al}f(x)= \jgja \poch \izx (x-p)^{-\al} f(p)dp
\]
and the Caputo fractional derivative
\[
\da f(x) = \poch I^{1-\al}[f(\cdot)-f(0)](x)=  \jgja \poch \izx (x-p)^{-\al}[f(p)-f(0)]dp.
\]
\end{de}
\no We will introduce the characterization of the domain of the Riemann-Liouville derivative, considered as an operator acting on $L^{2}(0,1)$. This characterization plays a significant role in establishing the properties of $\poch \da$ in the semigroup framework. The following proposition is the extended version of \cite[Theorem 2.1]{Y} which can be found in the Appendix of \cite{KY}.
\begin{prop}\label{eq}
Let us denote by $H^{\al}(0,1)$ the fractional Sobolev space (see \cite[definition 9.1]{Lions}).
We define a subspace of $H^{\al}(0,1)$ as follows
\[
{}_{0}H^{\al}(0,1)=\left\{
\begin{array}{lll}
H^{\al}(0,1)  & \m{ for } & \al\in (0,\frac{1}{2}), \\
\{ u \in H^{\frac{1}{2}}(0,1): \hd \int_{0}^{1}\frac{|u(x)|^{2}}{x}dx<\infty \} & \m{ for } & \al=\frac{1}{2}, \\
\{ u \in H^{\al}(0,1): \hd u(0)=0\} & \m{ for } & \al\in (\frac{1}{2},1).
\end{array}
\right.
\]\
We equip ${}_{0}H^{\al}(0,1)$ with the norm induced from $H^{\al}(0,1)$ for $\al \neq \frac{1}{2}$ and we set
\[
\norm{u}_{_{0}H^{\frac{1}{2}}(0,1)}  =\left(\norm{u}_{H^{\frac{1}{2}}(0,1)}^{2} + \izj \frac{\abs{u(x)}^{2}}{x} dx\right)^{\frac{1}{2}}.
\]
Then, for $\al\in (0,1)$ the operators
$\ia: L^{2}(0,1)\longrightarrow {}_{0}H^{\al}(0,1)$ and $\p^{\al}:{}_{0}H^{\al}(0,1) \longrightarrow L^{2}(0,1)$ are isomorphism and the following
inequalities
$$\ca^{-1} \| u \|_{_{0}H^{\al}(0,1)}
\leq \| \p^{\al} u \|_{L^{2}(0,1)}\leq \ca
\| u \|_{_{0}H^{\al}(0,1)} \hd \m{ for } u \in {}_{0}H^{\al}(0,1),
$$

$$
\ca^{-1} \| \ia f  \|_{_{0}H^{\al}(0,1)}
\leq \|  f \|_{L^{2}(0,1)}\leq \ca \| \ia f
\|_{_{0}H^{\al}(0,1)} \hd \m{ for } f  \in L^{2}(0,1)
$$
hold, where $\ca$ denotes a positive constant dependent on $\al$.
\end{prop}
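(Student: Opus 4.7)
The plan is to follow the strategy of Theorem 2.1 of \cite{Y} and its extension in the Appendix of \cite{KY}, organising the argument according to the position of $\al$ with respect to the trace threshold $\frac{1}{2}$ of $H^{\al}(0,1)$. In each of the three regimes I aim to show that $\ia$ and $\p^{\al}$ are bounded mutual inverses between $L^{2}(0,1)$ and ${}_{0}H^{\al}(0,1)$; once this is achieved, the claimed two-sided inequalities follow directly by substituting one operator in terms of the other, with a single constant $\ca$ depending only on $\al$.

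The common ingredients are the semigroup identities $\p^{\al}\ia f=f$ for $f\in L^{2}(0,1)$ and $\ia\p^{\al}u=u$ for $u\in{}_{0}H^{\al}(0,1)$, both obtained from Fubini's theorem together with the Beta integral. Continuity $\ia:L^{2}(0,1)\to H^{\al}(0,1)$ is classical; it can be proved either directly from the kernel representation by Young/Hardy-type arguments, or via extension by zero and the Mellin multiplier $s^{-\al}$ on the half-line. For $\al\in(0,\frac{1}{2})$ there is no trace, so $H^{\al}(0,1)={}_{0}H^{\al}(0,1)$ and the identification is immediate. For $\al\in(\frac{1}{2},1)$, a Cauchy--Schwarz estimate gives $|\ia f(x)|\lesssim x^{\al-1/2}\|f\|_{L^{2}(0,1)}$, so $\ia f(0)=0$ and the image lies in $\{u\in H^{\al}(0,1):u(0)=0\}$; the reverse inclusion follows by verifying that $\p^{\al}u\in L^{2}(0,1)$ whenever $u\in H^{\al}(0,1)$ and $u(0)=0$.

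The principal obstacle is the critical exponent $\al=\frac{1}{2}$, where $H^{1/2}(0,1)$ admits no trace but the zero extension of a generic $H^{1/2}$ function to $\rr$ need not belong to $H^{1/2}(\rr)$. Here the weight $\izj|u(x)|^{2}/x\,dx$ arises as precisely the Lions--Magenes correction characterising the interpolation space $[L^{2}(0,1),\hj]_{1/2}$, and it selects exactly those functions whose trivial extension belongs to $H^{1/2}(\rr)$. I would obtain boundedness of $\ia:L^{2}(0,1)\to{}_{0}H^{1/2}(0,1)$ by combining the standard $H^{1/2}$-estimate with a one-sided Hardy inequality applied to the zero extension of $I^{1/2}f$, and surjectivity by showing that the weighted condition on $u$ is exactly what is needed to guarantee $\p^{1/2}u\in L^{2}(0,1)$ together with $\ia\p^{1/2}u=u$.

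Once $\ia$ and $\p^{\al}$ are bounded and mutually inverse in each case, the norm equivalences stated in the proposition follow immediately by substituting $u=\ia f$ in one direction and $f=\p^{\al}u$ in the other, which finishes the proof.
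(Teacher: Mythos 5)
You should first note that the paper itself offers no proof of this proposition: it is explicitly recalled as ``the extended version of \cite[Theorem 2.1]{Y} which can be found in the Appendix of \cite{KY}'', so there is no in-paper argument to compare against. Your outline follows exactly the strategy of those references, which is the right thing to do, and the overall architecture (prove that $\ia$ and $\p^{\al}$ are bounded mutual inverses in each of the three regimes, then read off the two-sided inequalities) is sound.

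That said, as a proof the text has gaps precisely at the points that carry the real content. First, for $\al\in(0,\frac{1}{2})$ you call the identification ``immediate'' because there is no trace; but the definition ${}_{0}H^{\al}(0,1)=H^{\al}(0,1)$ is only a definition, and what must actually be proved is surjectivity of $\ia$ onto all of $H^{\al}(0,1)$, equivalently that $\p^{\al}$ is bounded from $H^{\al}(0,1)$ to $L^{2}(0,1)$ for every subcritical $\al$. That is the substantive assertion of \cite[Theorem 2.1]{Y} (proved there via extension, Fourier characterization of $H^{\al}(\rr)$ and the multiplier $(i\xi)^{\al}$), and it is not immediate. Second, the identity $\ia\p^{\al}u=u$ is not a free ``semigroup identity'' on a par with $\p^{\al}\ia f=f$: it holds exactly because $u\in{}_{0}H^{\al}(0,1)$, i.e.\ because $I^{1-\al}u$ lies in $H^{1}(0,1)$ and vanishes at $0$, and establishing this is part of the surjectivity argument rather than a preliminary. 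Third, in the critical case your reduction to ``the trivial extension belongs to $H^{1/2}(\rr)$'' is slightly off: the weight $\izj|u(x)|^{2}/x\,dx$ in ${}_{0}H^{1/2}(0,1)$ sits only at the left endpoint, so the correct characterization is extension by zero across $x=0$ (to the left half-line), not to all of $\rr$; requiring the zero extension to both sides to lie in $H^{1/2}(\rr)$ would force the Lions--Magenes weight at $x=1$ as well and would give a strictly smaller space than the one in the statement. With these three points repaired (or delegated explicitly to \cite{Y} and \cite{KY}, as the paper itself does), the argument is complete.
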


\begin{coro}\label{eqc}
For $\al, \beta \in (0,1]$ there holds $I^{\beta}: {}_{0}H^{\al}(0,1)\rightarrow {}_{0}H^{\al+\beta}(0,1),$
where in the case $\al+\beta > 1$
\[
_{0}H^{\al+\beta}(0,1) = \{f \in H^{\al+\beta}(0,1): f(0) = 0,\hd f' \in {}_{0}H^{\al+\beta-1}(0,1) \}.
\]
Furthermore, there exists a positive constant $c$ depending only on $\al,\beta$ such that for every $f \in {}_{0}H^{\al}(0,1) $
\[
\norm{I^{\beta}f}_{{}_{0}H^{\al+\beta}(0,1)} \leq c \norm{f}_{{}_{0}H^{\al}(0,1)}.
\]

\end{coro}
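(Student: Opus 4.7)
The plan is to reduce the claim to Proposition~\ref{eq} by exploiting the semigroup identity $I^\beta \circ I^\al = I^{\al+\beta}$ on $L^2(0,1)$ (which follows from Fubini; see~\cite{Samko}). Given $f \in {}_0 H^\al(0,1)$, Proposition~\ref{eq} furnishes a unique $g \in L^2(0,1)$ such that $f = I^\al g$ with $\norm{g}_{L^2(0,1)} \le \ca \norm{f}_{{}_0 H^\al(0,1)}$. Consequently $I^\beta f = I^{\al+\beta} g$, and the problem reduces to describing the image of $I^{\al+\beta}$ acting on a generic $g \in L^2(0,1)$.

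I would then split the analysis by the size of $\al+\beta$. If $\al+\beta < 1$, Proposition~\ref{eq} at the exponent $\al+\beta$ places $I^{\al+\beta} g$ directly in ${}_0 H^{\al+\beta}(0,1)$ with a linear bound in $\norm{g}_{L^2(0,1)}$, and composing the two estimates yields the desired inequality. If $\al+\beta = 1$, then $I^{\al+\beta} g$ is the usual antiderivative of $g$; it lies in $H^1(0,1)$, vanishes at $0$, and has derivative $g \in L^2(0,1)$, so the claim is immediate. If $\al+\beta > 1$ (with not both equal to $1$), the kernel $(x-p)^{\al+\beta-2}$ is integrable and differentiation under the integral yields $(I^{\al+\beta} g)' = I^{\al+\beta-1} g$; applying Proposition~\ref{eq} with exponent $\al+\beta-1 \in (0,1)$ places this derivative in ${}_0 H^{\al+\beta-1}(0,1)$, while $(I^{\al+\beta} g)(0)=0$ is read off the integral representation. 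These two facts are precisely the membership conditions in the corollary's definition of ${}_0 H^{\al+\beta}(0,1)$, and the norm bound propagates through the chain.

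The main point requiring care is the super-unit regime $\al+\beta>1$: one must justify differentiating the fractional integral of a merely $L^2$ datum, and then invoke the characterization $H^s(0,1)=\{u\in L^2(0,1):u'\in H^{s-1}(0,1)\}$ for $s>1$ to promote membership of $u'$ in ${}_0 H^{\al+\beta-1}(0,1)$, together with $u(0)=0$, into full $H^{\al+\beta}(0,1)$-regularity. The endpoint $\al=\beta=1$ is an elementary calculation with the second antiderivative and can be treated separately. Beyond this bookkeeping I do not foresee a genuine obstacle; the corollary is essentially a composition of the two isomorphisms supplied by Proposition~\ref{eq}, and all one has to watch is that the final constant depends solely on $\al$ and $\beta$.
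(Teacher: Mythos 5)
Your proposal is correct and follows essentially the same route as the paper: factor $f=I^{\al}w$ via Proposition \ref{eq}, use $I^{\beta}I^{\al}=I^{\al+\beta}$, and split on whether $\al+\beta\leq 1$ or $\al+\beta>1$, in the latter case checking $(I^{\al+\beta}w)(0)=0$ and $\poch I^{\al+\beta}w=I^{\al+\beta-1}w\in {}_{0}H^{\al+\beta-1}(0,1)$. The only cosmetic difference is that the paper obtains the norm bound in the super-unit regime explicitly via the Poincar\`e inequality, whereas you leave it as ``propagating through the chain.''
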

\begin{proof}
It is an easy consequence of Proposition \ref{eq}. If $f \in {}_{0}H^{\al}(0,1)$ then, by Proposition \ref{eq}, there exists $w \in L^{2}(0,1)$ such that $f = I^{\al}w$. Hence,
\[
I^{\beta} f = I^{\beta} I^{\al}w = I^{\al+\beta}w.
\]
If $\al+\beta \leq 1$, then applying again Proposition \ref{eq} we obtain that $I^{\beta} f \in {}_{0}H^{\al+\beta}(0,1).$ Furthermore, we note that $w = \p^{\al}f$ and by Proposition \ref{eq} we have
\[
\norm{I^{\beta}f}_{{}_{0}H^{\al+\beta}(0,1)} = \norm{I^{\al+\beta}w}_{{}_{0}H^{\al+\beta}(0,1)} \leq c(\al,\beta) \norm{w}_{L^{2}(0,1)} \leq c(\al,\beta) \norm{f}_{{}_{0}H^{\al}(0,1)}.
\]

In the case $1<\al+\beta\leq 2$ we note that $(I^{\beta} f)(0) = (I^{\al+\beta}w)(0) = 0$ and $\poch I^{\beta}f = I^{\al+\beta-1}w \in {}_{0}H^{\al+\beta-1}(0,1)$, which leads to $I^{\beta} f \in {}_{0}H^{\al+\beta}(0,1).$\\
 In order to show the norm estimate we use the Poincar\`{e} inequality together with Proposition \ref{eq} to obtain
\[
\norm{I^{\beta}f}_{_{0}H^{\al+\beta}(0,1)} = \norm{I^{\al+\beta}w}_{_{0}H^{\al+\beta}(0,1)} \leq c
\norm{I^{\al+\beta-1}w}_{_{0}H^{\al+\beta-1}(0,1)}
\]
\[
\leq c(\al,\beta) \norm{w}_{L^{2}(0,1)} \leq c(\al,\beta) \norm{f}_{{}_{0}H^{\al}(0,1)}.
\]
\end{proof}

Now, we will briefly recall the ideas introduced in \cite{ja}. Namely, we will look at the operator $\poch \da$ from the perspective of operator theory.\\
\no At first, we need to characterize the domain of the operator $\poch \da$. We note that for absolutely continuous $u$ the Caputo fractional derivative may be equivalently written in the form $\da u = I^{1-\al}u_{x}$. Thus, just by definition we have $\poch \da u = \poch I^{1-\al} u_{x} = \p^{\al}u_{x},$ whenever one of the sides of identity is meaningful.
From Proposition \ref{eq} the domain of $\p^{\al}$ in $\ld$ coincides with ${}_{0}H^{\al}(0,1).$ Thus we may consider the domain of $\poch \da$ as $\{u\in H^{1+\al}(0,1): u_{x} \in {}_{0}H^{\al}(0,1)\}$. Taking into account the boundary condition in (\ref{aa}) we finally define the domain of $\poch \da$ as
\eqq{D(\poch \da) \equiv \dd := \{u \in H^{1+\al}(0,1) :u_{x} \in {}_{0}H^{\al}(0,1),\hd  u(1)=0\}.}{dziedzina}
We equip $\dd$ with the graph norm
\[
\norm{u}_{\dd} = \norm{u}_{H^{1+\al}(0,1)} \m{ for } \al \in (0,1) \setminus \{\frac{1}{2}\}
\]
and
\[
\norm{u}_{\dd} = \left( \norm{u}_{H^{\frac{3}{2}}(0,1)}^{2} + \izj \frac{\abs{u_{x}(x)}^{2}}{x}dx\right)^{\frac{1}{2}} \m{ for }\al = \frac{1}{2}.
\]
We finish this chapter by recalling the main result from \cite{ja} which will play a crucial role here.

\begin{theorem}\cite[Theorem 2]{ja}\label{analitycznosc}
Operator $\poch \da: \dd \subseteq \ld \rightarrow \ld$ is a generator of analytic semigroup.
\end{theorem}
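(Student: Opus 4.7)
My plan is to show that $-\poch \da$ is $m$-sectorial on $\ld$ via the form method. Multiplying the resolvent equation $-\poch \da u = f$ by $\bar v$ and integrating by parts yields, after using $v(1)=0$ and the observation $\da u(0) = I^{1-\al}u_{x}(0) = 0$ (which is forced by $u_{x}(0)=0$), the sesquilinear form
\[
a(u,v) = \int_0^1 I^{1-\al} u_x \, \overline{v_x}\, dx,
\]
defined on the form domain $V := \{v \in H^1(0,1) : v(1) = 0\}$, which is densely embedded in $\ld$. The identity $\poch \da u = \p^{\al}u_{x}$ combined with Proposition \ref{eq} suggests that $V$ is the natural ``energy space''. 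Continuity of $a$ on $V\times V$ is then immediate from the $\ld$-boundedness of $I^{1-\al}$.

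The technical heart of the argument is sectoriality: the numerical range $\{a(u,u): u\in V,\ \norm{u}_{V}=1\}$ should sit in a proper subsector of the right half-plane with opening angle strictly less than $\pi$. Setting $g=u_x$, this reduces to the pair of inequalities
\[
\abs{\Im \int_{0}^{1} I^{1-\al}g\,\bar g\, dx} \le \tan\!\bigl(\tfrac{(1-\al)\pi}{2}\bigr)\, \Re \int_{0}^{1} I^{1-\al}g\,\bar g\, dx,
\]
together with the coercivity bound $\Re \int_{0}^{1} I^{1-\al}g\,\bar g\, dx \ge c\,\norm{I^{(1-\al)/2}g}_{\ld}^{2}$. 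I would prove these by factoring $I^{1-\al} = I^{(1-\al)/2}\circ I^{(1-\al)/2}$ and using that the $\ld$-adjoint of $I^{\beta}$ is the analogous right-sided Riemann--Liouville integral; the phase that appears when transferring one factor past the complex conjugation is exactly $(1-\al)\pi/2$, giving the angular bound.

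Once continuity and sectoriality are in hand, the Lions--Kato representation theorem produces a unique $m$-sectorial operator $\tl{A}$ on $\ld$ whose negative generates an analytic semigroup. It remains to match $D(\tl{A})$ with $\dd$. If $u\in D(\tl{A})$ and $\tl{A}u=f$, testing $a(u,v)=\langle f,v\rangle$ against $v\in C_{c}^{\infty}(0,1)$ gives $\poch I^{1-\al}u_{x}=f\in \ld$, so $I^{1-\al}u_{x}\in H^{1}(0,1)$; since $(I^{1-\al}u_{x})(0)=0$ it lies in ${}_{0}H^{1}(0,1)$, and Proposition \ref{eq} then forces $u_{x}\in {}_{0}H^{\al}(0,1)$, which together with $u(1)=0$ places $u$ in $\dd$. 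The converse inclusion and the identity $\tl{A}u=-\poch \da u$ are obtained by reversing the integration by parts. The main obstacle is the sectoriality estimate above: on the bounded interval $(0,1)$ a direct Fourier-transform proof is unavailable, so one must argue via the factorization through $I^{(1-\al)/2}$ and its explicit adjoint, or through a Mellin-transform analysis of the kernel $x_{+}^{-\al}/\Gamma(1-\al)$; once that inequality is secured, everything else is bookkeeping in the ${}_{0}H^{\al}$ scale already supplied by Proposition \ref{eq} and Corollary \ref{eqc}.
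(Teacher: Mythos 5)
First, a point of reference: the present paper does not prove Theorem \ref{analitycznosc} at all --- it is imported verbatim from \cite[Theorem 2]{ja} --- so your proposal is really competing with the argument of that reference. Your overall strategy (locate the numerical range of $a(u,u)=\int_0^1 I^{1-\al}u_x\,\overline{u_x}\,dx$ in a sector of half-angle $(1-\al)\pi/2$ and deduce analyticity) is indeed the engine of the cited proof, and the two inequalities you isolate are true and are the right core estimates. The genuine gap is in how you package them: Kato's (or Lions') representation theorem requires the sectorial form to be \emph{closed} on the form domain, i.e.\ a G\aa rding inequality $\Re a(u,u)+C\norm{u}^2_{\ld}\ge c\norm{u}^2_V$, and this fails for your choice $V=\{v\in H^1(0,1):v(1)=0\}$ with the $H^1$ norm. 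Indeed $\Re a(u,u)$ is comparable to $\norm{I^{(1-\al)/2}u_x}^2_{\ld}$, which is strictly weaker than $\norm{u_x}^2_{\ld}$: for $u_x$ oscillating at frequency $n$ one has $\Re a(u,u)\sim n^{-(1-\al)}\norm{u_x}^2_{\ld}$ while $\norm{u}^2_{\ld}\sim n^{-2}$, so no constant $C$ rescues coercivity. Consequently the form with domain $V$ is not closed; one must either pass to its completion under the form norm (an $H^{(1+\al)/2}$-type space, which then forces you to verify closability of a non-symmetric form --- not automatic) or abandon the representation theorem and extract the sectorial resolvent bound $\norm{(\lambda-\poch\da)^{-1}}\le C/\abs{\lambda}$ directly from the identity $\lambda\norm{u}^2_{\ld}+a(u,u)=\langle f,u\rangle$ together with a separate existence argument, which is in effect what \cite{ja} does.

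Two further remarks on the ``technical heart.'' Your reason for avoiding the Fourier transform is backwards: since $I^{1-\al}$ is a causal convolution, extending $g$ by zero to the real line leaves $\int_0^1 I^{1-\al}g\,\bar g\,dx$ unchanged, and Plancherel applied to the kernel $t_+^{-\al}/\Gamma(1-\al)$, whose transform is $(i\xi)^{\al-1}$ with argument $\mp(1-\al)\pi/2$, yields both the angular bound with the exact constant $\tan\bigl(\tfrac{(1-\al)\pi}{2}\bigr)$ and the lower bound by $\norm{I^{(1-\al)/2}g}^2_{\ld}$. By contrast, the route you propose instead is the dubious one: writing $I^{1-\al}=I^{(1-\al)/2}\circ I^{(1-\al)/2}$ and moving one factor across the pairing produces the \emph{right-sided} fractional integral acting on $\bar g$, not a phase factor, so the claimed appearance of the angle $(1-\al)\pi/2$ does not follow from that manipulation as sketched. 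The concluding domain identification (distributional testing, $I^{1-\al}u_x\in {}_{0}H^{1}(0,1)$, injectivity of $I^{1-\al}$ and Proposition \ref{eq} forcing $u_x\in{}_{0}H^{\al}(0,1)$) is sound once the correct form domain is in place.
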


\section{Solution to (\ref{Stefan}) with a given function $s$.}

In this chapter we turn our attention to the system (\ref{Stefan}). At first, we will find a regular solution to (\ref{Stefan}) assuming that function $s$ is given. That is, we will search for the solution to

\begin{equation}\label{niech}
 \left\{ \begin{array}{ll}
u_{t} - \poch \da u = 0 & \textrm{ in }  Q_{s,T}, \\
u_{x}(0,t) = 0, \ \ u(t,s(t)) = 0 & \textrm{ for  } t \in (0,T), \\
u(x,0) = u_{0}(x) & \textrm{ for } 0<x<b \\
\end{array} \right. \end{equation}
with a given function $s:[0,T]\rightarrow \mathbb{R}$. We assume that
\eqq{s \in C[0,T],\hd s(0)=b, \hd \exists \hd  M > 0 \hd \m{such that } 0 < \dot{s}(t) \leq M \m{ for every } t \in [0,T].
}{zals}

At first, we may assume that the initial condition is square integrable. We will specify the assumptions on $\uz$ in detail, as we will formulate the main theorems.

\subsection{Transformation to the cylindrical domain}

First of all, we will change the coordinates in order to pass to the cylindrical domain. We apply the standard substitution $p = \frac{x}{s(t)}$ and we define
\eqq{v(p,t) := u(s(t)p,t) = u(x,t).}{defv}
We will write the system (\ref{niech}) in terms of $v$. Firstly, we note that $\frac{\p}{\p p} = s(t)\poch$, thus
\[
v_{p}(p,t) = \frac{\p}{\p p}v(p,t) = \frac{\p}{\p p}u(s(t)p,t) = s(t)\poch u(s(t)p,t) = s(t)u_{x}(x,t),
\]
\[
v_{t}(p,t) = \frac{d}{dt}u(s(t)p,t) =u_{t}(x,t) + p\dot{s}(t)u_{x}(x,t).
\]
Together we have
\[
u_{t}(x,t) = v_{t}(p,t) - p\frac{\dot{s}(t)}{s(t)}v_{p}(p,t).
\]
Furthermore, since $v_{r}(r,t) = s(t)u_{x}(s(t)r,t)$, we may write
\[
\Gamma(1-\al)(\p^{\al}v_{p})(p,t) = \frac{\p}{\p p}\int_{0}^{p}(p-r)^{-\al}v_{r}(r,t)dr
=s(t)\frac{\p}{\p p}\int_{0}^{p}(p-r)^{-\al}u_{x}(s(t)r,t)dr
\]
\[
= \podst{s(t)r = w}{s(t)dr = dw}
 =  \frac{\p}{\p p}\int_{0}^{s(t)p}(p-\frac{w}{s(t)})^{-\al}u_{x}(w,t)dw
 \]
 \[
=s^{\al}(t)\frac{\p}{\p p}\int_{0}^{s(t)p}(s(t)p-w)^{-\al}u_{x}(w,t)dw =s^{\al+1}(t)\poch\izx (x-w)^{-\al}u_{x}(w,t)dw.
\]
In this way we obtained that
\eqq{
(\p^{\al}u_{x})(x,t)=\frac{1}{s^{1+\al}(t)}(\p^{\al}v_{p})(p,t).
}{zamda}
Denoting
\eqq{
v_{0}(p) = u_{0}(pb)}{defv0}
and renaming $p$ by $x$ we obtain that $v$ satisfies

\begin{equation}\label{niecv}
 \left\{ \begin{array}{ll}
v_{t} - x\frac{\dot{s}(t)}{s(t)}v_{x} - \frac{1}{s^{1+\al}(t)} \poch \da v = 0 & \textrm{ for } 0<x<1, 0<t<T,\\
v_{x}(0,t) = 0, \ \ v(1,t) = 0 & \textrm{ for  } t \in (0,T), \\
v(x,0) = v_{0}(x) & \textrm{ for } 0<x<1. \\
\end{array} \right. \end{equation}

\no In the next section we will find a unique solution to (\ref{niecv}) which will have appropriate regularity properties.

\subsection{Solution to transformed problem}
We will solve the system (\ref{niecv}) by means of the theory of evolution operators.
Let us define the family of operators $A(\cdot):\dd\subseteq L^{2}(0,1)\rightarrow L^{2}(0,1)$ given by the following formula
\eqq{
A(t) =   x\frac{\dot{s}(t)}{s(t)}\poch + \frac{1}{s^{1+\al}(t)} \poch \da.
}{At}
Let us denote
\[
A_{1}(t) =  x\frac{\dot{s}(t)}{s(t)}\poch \m{ and } A_{2}(t) = \frac{1}{s^{1+\al}(t)} \poch \da
\]

From Theorem \ref{analitycznosc} and assumption (\ref{zals}) we may infer that the family $A_{2}(\cdot)$ satisfies the assumption (6.1.1) from \cite{Lunardi}. Namely,
\[
\forall t \in [0,T] \hd A_{2}(t) \m{ is sectorial and } D(A_{2}(t)) \equiv \dd.
\]
\eqq{
t\mapsto A_{2}(t) \in C^{0,1}([0,T];B(\dd,L^{2}(0,1))).
}{Aholder}
However, since $\dot{s}$ is not H\"older continuous we are not allowed to use directly the results from \cite[Chapter 6]{Lunardi} to the family $A(\cdot)$. Hence, we are going to find firstly a mild solution to the problem (\ref{niecv}). Then we will show that this mild solution actually satisfies (\ref{niecv}) almost everywhere. Finally, we will further increase the regularity of the solution.
Let us denote by $\{G(t,\sigma): 0\leq \sigma\leq t \leq T\}$  the evolution operator associated with $A_{2}(t)$ and let us denote by $c$ a generic positive constant which is a continuous increasing function of $T$.
Since $A_{2}(t)$ fulfills the condition (\ref{Aholder}) then by \cite[Corollary 6.1.8]{Lunardi} we obtain that for every $\theta, \delta \in (0,1)$, $G$ satisfies the following estimates. If $g\in [L^{2}(0,1),\dd]_{\delta}$, then for any $0\leq \sigma < t \leq T$
\eqq{
\norm{G(t,\sigma)g}_{\dd} \leq \frac{c}{(t-\sigma)^{1-\delta}}\norm{g}_{[L^{2}(0,1),\dd]_{\delta}}.
}{r1}
Moreover, for any $0 \leq \delta < \theta<1$, we have
\eqq{
\norm{G(t,\sigma)g}_{[L^{2}(0,1),\dd]_{\theta}} \leq \frac{c}{(t-\sigma)^{\theta-\delta}}\norm{g}_{[L^{2}(0,1),\dd]_{\delta}}
}{r4}
and  for $\theta \in (0,1), \delta \in (0,1]$, we have
\eqq{
\norm{A_{2}(t)G(t,\sigma)g}_{[L^{2}(0,1),\dd]_{\theta}} \leq \frac{c}{(t-\sigma)^{1+\theta-\delta}}\norm{g}_{[L^{2}(0,1),\dd]_{\delta}}.
}{r2}
Finally, for every $a \in (0,1)$ and every $0\leq \sigma < r < t \leq T$
\[
\norm{A_{2}(t)G(t,\sigma)g-A_{2}(r)G(r,\sigma)g}_{L^{2}(0,1)}
\]
\eqq{
\leq c\left(\frac{(t-r)^{a}}{(r-\sigma)^{1-\delta}}+\frac{1}{(r-\sigma)^{1-\delta}}-\frac{1}{(t-\sigma)^{1-\delta}}\right)
\norm{g}_{[L^{2}(0,1),\dd]_{\delta}}.
}{r3}

\no We would like to find a mild solution to (\ref{niecv}). For this purpose we rewrite the equation~(\ref{niecv}) in the integral form
\eqq{
v(x,t) = G(t,0)v_{0}(x) + \int_{0}^{t}G(t,\sigma)\frac{\dot{s}(\sigma)}{s(\sigma)}xv_{x}(x,\sigma)d\sigma.
}{wzorv}
We say that $v \in C([0,T];\dd)$ is a mild solution to (\ref{niecv}) if it satisfies (\ref{wzorv}).
\begin{theorem}\label{first}
  Let us assume that $v_{0} \in \dd$. Then, there exists a unique solution to~(\ref{wzorv}) belonging to $C([0,T];\dd)$.
\end{theorem}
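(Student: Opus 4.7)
The plan is to apply Banach's fixed point theorem to the map
\[
\Phi(v)(\cdot,t) := G(t,0)v_{0} + \int_{0}^{t} G(t,\sigma)\,\frac{\dot{s}(\sigma)}{s(\sigma)}\,x\,v_{x}(\cdot,\sigma)\,d\sigma
\]
on the space $C([0,T];\dd)$, viewing (\ref{niecv}) as a linear equation in which $A_{2}(t)$ generates the evolution system $G(t,\sigma)$ while the first--order term $A_{1}(t)v = x(\dot{s}/s)v_{x}$ is a perturbation of lower order with respect to $\dd$. Note that $\dot{s}/s$ is bounded on $[0,T]$ by (\ref{zals}), since $s(0)=b>0$ and $0<\dot{s}\leq M$ imply $b\leq s(t)\leq b+MT$.

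To see that $\Phi$ maps $C([0,T];\dd)$ into itself, I would first use the fact that $D(A_{2}(t))\equiv\dd$ is independent of $t$ together with (\ref{Aholder}) to deduce that $t\mapsto G(t,0)v_{0}$ belongs to $C([0,T];\dd)$ for $v_{0}\in\dd$ by standard evolution--system theory from \cite{Lunardi}. For the Duhamel term, I would pick $\delta\in(0,1)$ small enough so that $H^{\al}(0,1)$ is continuously embedded in $[L^{2}(0,1),\dd]_{\delta}$; this is possible because $\dd\hookrightarrow H^{1+\al}(0,1)$ and by real interpolation $[L^{2},H^{1+\al}]_{\delta}\simeq H^{\delta(1+\al)}$ (choose $\delta$ with $\delta(1+\al)<\al$). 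Since $v(\cdot,\sigma)\in\dd$ yields $v_{x}(\cdot,\sigma)\in {}_{0}H^{\al}(0,1)$, and multiplication by $x$ preserves $H^{\al}$--regularity (while also sending the value at $0$ to $0$), we get
\[
\left\|\,x\,v_{x}(\cdot,\sigma)\,\right\|_{[L^{2}(0,1),\dd]_{\delta}} \leq c\,\|v(\cdot,\sigma)\|_{\dd}.
\]
Combining this with estimate (\ref{r1}) and integrating the resulting integrable singularity $(t-\sigma)^{-(1-\delta)}$ yields
\[
\sup_{t\in[0,T]}\left\|\,\Phi(v)(\cdot,t)\,\right\|_{\dd} \leq c\,\|v_{0}\|_{\dd} + c\,T^{\delta}\sup_{\sigma\in[0,T]}\|v(\cdot,\sigma)\|_{\dd}.
\]
Continuity of the integral in $t$ with values in $\dd$ follows from dominated convergence, strong continuity of $G(t,\sigma)$ in $t$ away from $t=\sigma$, and the integrability of the singularity.

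For uniqueness and existence, I exploit linearity: $\Phi(v_{1})-\Phi(v_{2})$ has no free term, so the same estimate gives
\[
\sup_{t\in[0,T_{*}]}\|\Phi(v_{1})-\Phi(v_{2})\|_{\dd} \leq c\,T_{*}^{\delta}\sup_{t\in[0,T_{*}]}\|v_{1}-v_{2}\|_{\dd}.
\]
Choosing $T_{*}$ small enough that $cT_{*}^{\delta}<1$, Banach's theorem yields a unique fixed point on $[0,T_{*}]$. Since $v(\cdot,T_{*})\in\dd$ can be taken as a new initial datum, iterating finitely many times extends the solution uniquely to $[0,T]$. Alternatively, equipping $C([0,T];\dd)$ with the weighted norm $\sup_{t}e^{-\lambda t}\|v(\cdot,t)\|_{\dd}$ for large $\lambda$ produces the contraction directly on the full interval.

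The main obstacle I anticipate is verifying the embedding $H^{\al}(0,1)\hookrightarrow [L^{2}(0,1),\dd]_{\delta}$ uniformly with respect to the boundary conditions in the definition of $\dd$, in particular handling the critical case $\al=\tfrac{1}{2}$ where ${}_{0}H^{\al}$ is defined by a weighted norm; this can be circumvented by choosing $\delta$ strictly less than $\al/(1+\al)$ so that the embedding is lossy and therefore insensitive to the trace condition at $x=0$. A secondary technical point is continuity of $\Phi(v)(\cdot,t)$ in $\dd$ at $t=0$, which reduces to showing $\int_{0}^{t}(t-\sigma)^{-(1-\delta)}d\sigma\to 0$ as $t\to 0^{+}$, a trivial consequence of $\delta>0$.
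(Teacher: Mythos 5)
Your proposal is correct and follows essentially the same route as the paper: the same fixed-point map on $C([0,T];\dd)$, the same device of placing $xv_{x}$ in an interpolation space $[L^{2}(0,1),\dd]_{\delta}$ so that the Duhamel term carries only the integrable singularity $(t-\sigma)^{\delta-1}$, and the same contraction-plus-iteration argument. One caveat: your explicit condition $\delta(1+\al)<\al$ is not sufficient when $\al>\tfrac{1}{2}$, since the paper additionally needs $\delta<\tfrac{1}{2(1+\al)}$ because $xv_{x}$ need not vanish at $x=1$ (the trace condition built into $\dd$, not the one at $x=0$ you worried about) --- but your hedge ``small enough'' covers this.
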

\begin{proof}
  We will prove this result by the Banach fixed point theorem. We define the operator
  \[
  (Pv)(x,t) =  G(t,0)v_{0}(x) + \int_{0}^{t}G(t,\sigma)\frac{\dot{s}(\sigma)}{s(\sigma)}xv_{x}(x,\sigma)d\sigma.
  \]
  We will show that $P:C([0,T];\dd)\rightarrow C([0,T];\dd)$. Indeed, let $v \in C([0,T];\dd)$. Since $v_{0} \in \dd$ by \cite[Corollary 6.1.6. (iii)]{Lunardi} we obtain that $G(t,0)v_{0}~\in C([0,T];\dd)$.
  Let us pass to the second term. We will show that
\eqq{
A_{2}(t) \int_{0}^{t}G(t,\sigma)\frac{\dot{s}(\sigma)}{s(\sigma)}xv_{x}(x,\sigma)d\sigma \in C([0,T];L^{2}(0,1)).
}{r5}
Let us denote $\delta =\frac{\al}{\al+1}$ in the case $\al < \frac{1}{2}$ and for $\al \in [\frac{1}{2},1)$ let us mean by $\delta$ an arbitrary number belonging to the interval $(0,\frac{1}{2(1+\al)})$. We note that, since $v \in C([0,T];\dd)$, we have $v_{x} \in C([0,T];{}_{0}H^{\al}(0,1))$, hence $xv_{x} \in C([0,T];{}_{0}H^{\al}(0,1))$. Thus, we obtain that
\eqq{
  xv_{x} \in C([0,T];[L^{2}(0,1),\dd]_{\delta}).
}{vxm}
Let us denote
\eqq{
f(x,\sigma):=\frac{\dot{s}(\sigma)}{s(\sigma)}xv_{x}(x,\sigma).
}{deff}
Since for every $t \in [0,T]$ the operator $A_{2}(t)$ is closed by \cite[Proposition C.4]{Engel} we may pass with $A_{2}(t)$ under the integral sign. Hence, for any $0\leq \tau<t \leq T$ we may estimate as follows
\[
\norm{A_{2}(t)\izt G(t,\sigma)f(\cdot,\sigma)d\sigma -A_{2}(\tau)\int_{0}^{\tau} G(\tau,\sigma)f(\cdot,\sigma)d\sigma}_{L^{2}(0,1)}
\]
\[
= \norm{\izt A_{2}(t) G(t,\sigma)f(\cdot,\sigma)d\sigma -\int_{0}^{\tau}A_{2}(\tau) G(\tau,\sigma)f(\cdot,\sigma)d\sigma}_{L^{2}(0,1)} \leq
\]
\[
\int_{\tau}^{t}\norm{A_{2}(t)G(t,\sigma)f(\cdot,\sigma)}_{L^{2}(0,1)}d\sigma + \int_{0}^{\tau}\norm{(A_{2}(t)G(t,\sigma)-A_{2}(\tau)G(\tau,\sigma))f(\cdot,\sigma)}_{L^{2}(0,1)}d\sigma
\equiv: J_{1}.
\]
By (\ref{r1}) and (\ref{r3}) we may estimate $J_{1}$ further,
\[
J_{1}\leq c \norm{f}_{L^{\infty}(0,T;[L^{2}(0,1),\dd]_{\delta})}\int_{\tau}^{t} (t-\sigma)^{\delta-1}d\sigma
\]
\[
 + c \norm{f}_{L^{\infty}(0,T;[L^{2}(0,1),\dd]_{\delta})}\int_{0}^{\tau}\frac{(t-\tau)^{a}}{(\tau-\sigma)^{1-\delta}} + \frac{1}{(\tau-\sigma)^{1-\delta}}-\frac{1}{(t-\sigma)^{1-\delta}} d\sigma
\]
\[
\leq \frac{c}{\delta}\norm{f}_{L^{\infty}(0,T;[L^{2}(0,1),\dd]_{\delta})}
 \left(2(t-\tau)^{\delta} + (t-\tau)^{a}\tau^{\delta} + \tau^{\delta}-t^{\delta}\right)
\]
for any $a \in (0,1)$. The expression above tends to zero as $\tau\rightarrow t$ for any $0\leq \tau<t \leq T$, hence (\ref{r5}) is proven. We note that by (\ref{zals}) inclusion (\ref{r5}) leads to
\eqq{
\int_{0}^{t}G(t,\sigma)\frac{\dot{s}(\sigma)}{s(\sigma)}xv_{x}(x,\sigma)d\sigma \in C([0,T];\dd).
}{r6}
Thus, we have shown that $P:C([0,T];\dd)\rightarrow C([0,T];\dd)$.

It remains to show that $P$ is a contraction on $C([0,T_{1}];\dd)$ for $T_{1}$ small enough. To this end we fix $v, w \in C([0,T_{1}];\dd)$. Then, we may estimate using (\ref{zals}), (\ref{r1}) and~(\ref{vxm})
  \[
  \norm{Pv - Pw}_{C([0,T_{1}];\dd)} \leq \sup_{t\in (0,T_{1})}\frac{M}{b}\int_{0}^{t} \norm{G(t,\sigma)x[v_{x}-w_{x}](\cdot,\sigma)}_{\dd} d\sigma
  \]
  \[
   \leq \frac{cM}{b}\int_{0}^{T_{1}} (T_{1}-\sigma)^{\delta-1}d\sigma \norm{v_{x}-w_{x}}_{C([0,T_{1}];[L^{2}(0,1);\dd]_{\delta})} \leq \frac{cM}{b}\frac{T_{1}^{\delta}}{\delta}\norm{v-w}_{C([0,T_{1}];\dd)}.
  \]
   Hence, for $T_{1} < \left(\frac{b\delta}{cM}\right)^{\frac{1}{\delta}}$ the operator $P$ is a contraction on $C([0,T_{1}];\dd)$.
  Thus, by the Banach fixed point theorem, we obtain the existence of a unique solution to (\ref{wzorv}) on the interval $(0,T_{1}]$ which belongs to $C([0,T_{1}];\dd)$. Since the time interval may be estimated from below by a universal constant, after a finite number of steps we may extend the solution to $(0,T]$.
\end{proof}
  \begin{lem}\label{second}
The mild solution $v$ obtained in Theorem \ref{first} is a strong solution, i.e.  $v \in C([0,T];\dd)$, $v_{t} \in L^{\infty}(0,T;L^{2}(0,1))$ and it satisfies
  \[
  v_{t} - A(t)v = 0
  \]
  for almost all $t\in [0,T]$ in the sense of $L^{2}(0,1)$.
  \end{lem}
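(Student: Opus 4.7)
The plan is to differentiate the mild-solution identity
\[
v(t) = G(t,0)v_0 + \int_0^t G(t,\sigma)f(\cdot,\sigma)\,d\sigma, \qquad f(x,\sigma) := \frac{\dot s(\sigma)}{s(\sigma)}\,xv_x(x,\sigma),
\]
in the time variable, verify that the derivative lies in $L^\infty(0,T;L^2(0,1))$, and read off that it equals $A_2(t)v + A_1(t)v = A(t)v$ almost everywhere. The two key technical inputs are already available from the proof of Theorem \ref{first}: the mild solution $v$ belongs to $C([0,T];\dd)$, and $A_2(\cdot)\int_0^{\cdot} G(\cdot,\sigma)f(\cdot,\sigma)\,d\sigma \in C([0,T];L^2(0,1))$ by (\ref{r5})--(\ref{r6}). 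So the task reduces to differentiation under the integral sign, not to producing any new compactness or energy estimates.

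First I would handle the homogeneous piece $t\mapsto G(t,0)v_0$. Since $v_0\in \dd = D(A_2(0))$ and all domains $D(A_2(t))$ coincide with $\dd$, Corollary~6.1.6 of \cite{Lunardi}, combined with the Lipschitz dependence (\ref{Aholder}), gives that this map is $C^1$ on $[0,T]$ with values in $L^2(0,1)$, with derivative $A_2(t)G(t,0)v_0$ that is continuous in $L^2(0,1)$, hence a fortiori in $L^\infty(0,T;L^2(0,1))$.

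The substance of the argument lies in the convolution $w(t):=\int_0^t G(t,\sigma)f(\cdot,\sigma)\,d\sigma$. I would form the forward difference quotient and decompose
\[
\frac{w(t+h)-w(t)}{h} = \frac{1}{h}\int_t^{t+h}G(t+h,\sigma)f(\cdot,\sigma)\,d\sigma + \frac{1}{h}\int_0^t [G(t+h,\sigma)-G(t,\sigma)]f(\cdot,\sigma)\,d\sigma.
\]
Using $G(\sigma,\sigma)=I$ and the Lebesgue differentiation theorem applied to $f\in L^\infty(0,T;L^2(0,1))$, the first summand converges in $L^2(0,1)$ to $f(\cdot,t)$ for a.e.\ $t$. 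For the second, on the dense subspace $\dd$ one has $G(t+h,\sigma)-G(t,\sigma)=\int_t^{t+h}A_2(r)G(r,\sigma)\,dr$, and the bounds (\ref{r2})--(\ref{r3}), paired with the already-established continuity of $A_2(\cdot)w(\cdot)$ in $L^2(0,1)$, allow one to pass to the limit to obtain $A_2(t)w(t)$.

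The main obstacle is precisely this last passage to the limit. Because $\dot s$ is only bounded in $t$, the map $\sigma\mapsto f(\cdot,\sigma)$ fails to be Hölder continuous, so Lunardi's classical strict-solution theorem (Theorem~6.1.7 in \cite{Lunardi}) does not apply and $v_t$ cannot be expected to be continuous in $t$. One must settle for a.e.\ differentiability, which is legitimate once the candidate derivative is controlled in $L^\infty(0,T;L^2(0,1))$: $A_2(t)v\in C([0,T];L^2(0,1))$ because $v\in C([0,T];\dd)$, while $A_1(t)v = \frac{\dot s(t)}{s(t)}xv_x \in L^\infty(0,T;L^2(0,1))$ by (\ref{zals}) and $xv_x\in C([0,T];L^2(0,1))$. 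Adding the two contributions yields $v_t = A_2(t)v + A_1(t)v = A(t)v$ almost everywhere, with $v_t\in L^\infty(0,T;L^2(0,1))$, as claimed.
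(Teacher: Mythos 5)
Your proposal is correct and follows essentially the same route as the paper: the same splitting of the difference quotient of the convolution term into a local piece handled by the Lebesgue differentiation theorem in Banach spaces and a nonlocal piece handled via $G(t+h,\sigma)-G(t,\sigma)=\int_t^{t+h}A_2(r)G(r,\sigma)\,dr$, the bounds (\ref{r1})--(\ref{r3}) and dominated convergence, followed by the identification $f(\cdot,t)=A_1(t)v$. The only cosmetic difference is that the paper further decomposes the local piece to justify $\frac{1}{h}\int_t^{t+h}G(t+h,\sigma)f\,d\sigma\to f(\cdot,t)$, a step you state but do not detail.
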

  \begin{proof}
  Using definition (\ref{deff}) we may rewrite (\ref{wzorv}) as follows
  \eqq{
  v(x,t) = G(t,0)v_{0}(x) + \izt G(t,\sigma)f(x,\sigma)d\sigma.
  }{vcf}
  The proof is based on the reasoning carried in the proof of \cite[Lemma 6.2.1]{Lunardi}. We note that, since $f$ is not H\"older continuous, we can not apply \cite[Lemma 6.2.1]{Lunardi} directly. We will show that $v$ which satisfies (\ref{vcf}) is differentiable.
  By the properties of an evolution operator (\cite[Corollary 6.1.6. (iii)]{Lunardi} ) for every $t \in [0,T]$ we have
  \[
  \frac{\p}{\p t}G(t,0)v_{0} = A_{2}(t) G(t,0)v_{0} \m{ in } L^{2}(0,1).
  \]
  We will calculate the difference quotient of the second term on the right hand side of (\ref{vcf}). Let us assume that $h > 0$, in the case $h<0$ the proof is similar. We have
  \[
  \frac{1}{h}\left[\int_{0}^{t+h} G(t+h,\sigma)f(x,\sigma)d\sigma - \izt G(t,\sigma)f(x,\sigma)d\sigma\right]
  \]
  \[
  = \frac{1}{h}\int_{0}^{t} (G(t+h,\sigma)-G(t,\sigma))f(x,\sigma)d\sigma
  +\frac{1}{h}\int_{t}^{t+h} G(t+h,\sigma)f(x,\sigma)d\sigma =: I_{1} + I_{2}.
  \]
  In order to deal with $I_{1}$ we recall that for every $0 \leq \sigma < t \leq T$ and every $g \in L^{2}(0,1)$ the following limit holds in $L^{2}(0,1)$
  \[
  \lim_{h\rightarrow 0}\frac{1}{h}(G(t+h,\sigma)-G(t,\sigma))g = A_{2}(t)G(t,\sigma)g.
  \]
    Making use of (\ref{vxm}) we obtain that $f \in L^{\infty}(0,T;[L^{2}(0,1),\dd]_{\delta})$, where $\delta = \frac{\al}{1+\al}$ for $\al \in (0,\frac{1}{2})$ and $\delta$ denotes any fixed number from the interval $(0,\frac{1}{2(1+\al)})$ if $\al \in [\frac{1}{2},1)$. Further, we note that
  \[
  \norm{\frac{1}{h}[G(t+h,\sigma) - G(t,\sigma)]f(\cdot,\sigma)}_{L^{2}(0,1)} =
  \norm{\frac{1}{h}\int_{t}^{t+h}\frac{\p}{\p p}G(p,\sigma)f(\cdot,\sigma)d\sigma}_{L^{2}(0,1)}
  \]
  \[
  =\norm{\frac{1}{h}\int_{t}^{t+h}A(p)G(p,\sigma)f(\cdot,\sigma)dp}_{L^{2}(0,1)}
  \leq \frac{c}{h}\int_{t}^{t+h}(p-\sigma)^{\delta-1}dp\norm{f}_{L^{\infty}(0,T;[L^{2}(0,1),\dd]_{\delta})}
  \]
  \[
  \leq c(t-\sigma)^{\delta-1}\norm{f}_{L^{\infty}(0,T;[L^{2}(0,1),\dd]_{\delta})},
  \]
 where we used~(\ref{r1}).
  Hence, we may apply the Lebesgue dominated convergence theorem to pass to the limit under the integral sign in $I_{1}$ and we get
  \[
  \frac{1}{h}\int_{0}^{t} (G(t+h,\sigma)-G(t,\sigma))f(x,\sigma)d\sigma\rightarrow \int_{0}^{t} A_{2}(t)G(t,\sigma)f(x,\sigma)d\sigma.
  \]
  We decompose $I_{2}$ as follows
  \[
  \frac{1}{h}\int_{t}^{t+h} G(t+h,\sigma)f(x,\sigma)d\sigma= \frac{1}{h}\int_{t}^{t+h} G(t,\sigma)f(x,\sigma)d\sigma
  \]\[
   +\frac{1}{h}\int_{t}^{t+h} (G(t+h,\sigma)-G(t,\sigma))f(x,\sigma)d\sigma = I_{2,1} + I_{2,2}.
  \]
We note that due to the Lebesgue differentiation theorem  in Banach spaces (see \cite{Bochner}) we obtain that $I_{2,1}$ converges to $f(x,t)$ in $L^{2}(0,1)$ for almost all $t \in (0,T]$. For $I_{2,2}$ we have
\[
I_{2,2} = \frac{1}{h}\int_{t}^{t+h} (G(t+h,t)-Id)G(t,\sigma)f(x,\sigma)d\sigma =  \frac{(G(t+h,t)-Id)}{h}\int_{t}^{t+h}G(t,\sigma)f(x,\sigma)d\sigma.
\]
Thus, using again the Lebesgue differentiation theorem in Banach spaces and the continuity of $G(t,\cdot)$ in $L^{2}(0,1)$ we obtain that $I_{2,2}$ converges to zero in $L^{2}(0,1)$ for almost all $t \in (0,T]$.
Summing up the results we obtain that the following identity holds in $L^{2}(0,1)$ for almost all $t \in [0,T]$
\[
v_{t}(x,t) = A_{2}(t)G(t,0)v_{0}(x) + A_{2}(t)\izt G(t,\sigma)f(x,\sigma)d\sigma + f(x,t).
\]
Applying formula (\ref{vcf}) and the definitions of $f$ and $A_{2}$ we obtain that
\[
v_{t}(x,t) = \frac{1}{s^{1+\al}(t)}\poch \da v(x,t) + \frac{\dot{s}(t)}{s(t)}xv_{x}(x,t)
\]
for almost all $t \in [0,T]$ in $L^{2}(0,1)$ and we obtain the claim of lemma.
  \end{proof}

\no Our aim is to obtain a solution to (\ref{niech}) regular enough to satisfy the weak extremum principle. As it will be seen in the final chapter, our solution $u$ has to fulfill the following:
there exists $\beta\in (\al,1)$ such that
\eqq{
  \m{for every } t\in (0,T) \m{ and every } 0<\ve<\omega < s(t) \hd  u(\cdot,t) \in W^{2,\frac{1}{1-\beta}}(\ve,\omega).
}{regumax}
Thus, we need to increase the space regularity of the transformed problem~(\ref{niecv}). The main difficulty is that, from what we have proved by now, $v_{x} \in~{}_{0}H^{\al}(0,1)$ but $v_{x}$ does not need to vanish at the right endpoint of the interval. Hence, we are allowed to consider $v_{x}$ as an element of the interpolation space $[L^{2}(0,1),\dd]_{\delta}$ only for $\delta$ smaller than $\frac{1}{2(1+\al)}$.
 However, in order to obtain higher regularity, we have to examine the behaviour of $A_{2}(t)G(t,\sigma)f(x,\sigma)$ more carefully. The next lemma establishes the regularity properties of an evolution operator $G(t,\sigma)$ acting on the elements of $H^{a}(0,1)$ for $a > \frac{1}{2}$.
 At first we will discuss the case $\al \in (\frac{1}{2},1)$. Then, we will present more technical result in the case $\al \in (0,\frac{1}{2}]$.
\begin{lem}\label{lemusi}
Let us assume that $\al \in (\frac{1}{2},1)$ and $u_{\sigma} \in {}_{0}H^{\al}(0,1)$. We denote by $u$ the solution to the equation
\begin{equation}\label{pol}
 \left\{ \begin{array}{ll}
u_{t} =A_{2}(t)u & \textrm{ for } 0<x<1, \hd 0\leq \sigma<t<T,\\
u(x,\sigma) = u_{\sigma}(x) & \textrm{ for } 0<x<1, \\
\end{array} \right. \end{equation}
given by the evolution operator generated by the family $A_{2}(t)$. Then, for every \hd $0~<\gamma<\al$, for every $0<\ve<\omega<1$ there exists a positive constant $c=c(\al,b,M,T,\ve,\omega,\gamma)$, where $b,M$ comes from (\ref{zals}), such that
\[
\norm{A_{2}(t)u(\cdot,t)}_{H^{\gamma}(\ve,\omega)} \leq c(t-\sigma)^{-\frac{1+\gamma}{1+\al}}\norm{u_{\sigma}}_{{}_{0}H^{\al}(0,1)}.
\]
\end{lem}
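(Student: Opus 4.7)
The approach is to apply inequality (\ref{r2}) with interpolation parameters chosen so that the exponent $-1-\theta+\delta$ matches the desired rate $-(1+\gamma)/(1+\al)$. This forces $\theta = \gamma/(1+\al)$ and $\delta = \al/(1+\al)$. The plan is to apply (\ref{r2}) with these parameters and identify the interpolation spaces with Sobolev spaces on both sides: $[\ld,\dd]_{\gamma/(1+\al)}$ should embed into $H^{\gamma}(\ve,\omega)$ locally, and the initial datum $u_\sigma \in {}_{0}H^{\al}(0,1)$ should be placed inside $[\ld,\dd]_{\al/(1+\al)}$ with norm controlled by $\|u_\sigma\|_{{}_{0}H^{\al}(0,1)}$.

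The target-side identification is the easier one: the standard interpolation identity $[\ld, H^{1+\al}(0,1)]_\theta = H^{(1+\al)\theta}(0,1)$ (modulo boundary conditions), combined with restriction to an interior interval $(\ve,\omega)$ compactly contained in $(0,1)$, kills the boundary conditions inherited from $\dd$ and yields the desired local embedding with a constant depending on $\ve,\omega,\gamma,\al$.

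The main obstacle is the data-side identification. Since $\al > 1/2$, one has $(1+\al)\delta = \al > 1/2$, so that $[\ld,\dd]_{\al/(1+\al)}$ inherits from $\dd$ the Dirichlet condition $u(1)=0$, while $u_\sigma \in {}_{0}H^{\al}(0,1)$ only satisfies $u_\sigma(0)=0$. I would resolve this by the decomposition $u_\sigma = \tilde u_\sigma + u_\sigma(1)\psi$, where $\psi \in C^{\infty}([0,1])$ is a fixed cutoff with $\psi(0)=\psi'(0)=0$, $\psi(1)=1$, and $\supp\psi \subset ((1+\omega)/2, 1]$. The trace inequality in ${}_{0}H^{\al}(0,1)$ controls $|u_\sigma(1)|$ by $\|u_\sigma\|_{{}_{0}H^{\al}(0,1)}$, and $\tilde u_\sigma$ then satisfies both $\tilde u_\sigma(0)=0$ and $\tilde u_\sigma(1)=0$; at the level $\al < 3/2$ the condition $u_x(0)=0$ inherited from $\dd$ is not yet visible in interpolation, so $\tilde u_\sigma$ sits inside $[\ld,\dd]_{\al/(1+\al)}$ with the required norm bound. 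Applying (\ref{r2}) to $g = \tilde u_\sigma$ and combining with the target-side identification gives the claim for the principal term $A_2(t)G(t,\sigma)\tilde u_\sigma$.

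The smooth remainder $A_2(t)G(t,\sigma)(u_\sigma(1)\psi)$ is treated separately by a further splitting $\psi = \psi_1 + \psi_2$, with $\psi_1 \in \dd$ (for which Lunardi's theory gives a uniformly bounded contribution in $\ld$, a fortiori in $H^{\gamma}(\ve,\omega)$) and $\psi_2$ smooth and localized near $x=1$, whose propagation into $(\ve,\omega)$ is controlled using the analyticity of $G$ together with the explicit integral structure of $\poch\da$; in each case the resulting rate can be absorbed into $(t-\sigma)^{-(1+\gamma)/(1+\al)}$. The hardest step, and the entire reason for this lemma, is the boundary-condition mismatch that the above decomposition is designed to bypass.
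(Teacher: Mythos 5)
Your strategy---repair the boundary mismatch at $x=1$ by subtracting a lift from the initial datum and then apply the global smoothing estimate (\ref{r2}) with $\delta=\al/(1+\al)$---is not the paper's route, and it contains two genuine gaps. The first is the step ``$\tilde u_\sigma$ sits inside $[L^{2}(0,1),\dd]_{\al/(1+\al)}$ with the required norm bound.'' This is precisely the assertion the whole lemma is designed to avoid. The space $[L^{2}(0,1),\dd]_{\al/(1+\al)}$ is an interpolation space of the domain of the nonstandard operator $\poch\da$, whose description involves $u_{x}\in{}_{0}H^{\al}(0,1)$, i.e.\ the range of $I^{\al}$; its identification with $\{u\in H^{\al}(0,1):u(0)=u(1)=0\}$ at the level $(1+\al)\delta=\al>\frac{1}{2}$ is proved nowhere in the paper and cannot be quoted from the classical elliptic literature. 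The author only ever uses the safe inclusion ${}_{0}H^{\al}(0,1)\subseteq[L^{2}(0,1),\dd]_{\nu}$ for $\nu<\frac{1}{2(1+\al)}$, i.e.\ strictly below the threshold at which boundary conditions become visible. Without an independent characterization of the interpolation space at $\delta=\al/(1+\al)$, your principal term is simply not estimated.

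The second gap is the remainder $u_\sigma(1)\psi$. Since $\psi_{1}\in\dd$ forces $\psi_{1}(1)=0$, the piece $\psi_{2}$ necessarily has $\psi_{2}(1)=1$, so it violates the Dirichlet condition exactly as $u_\sigma$ did; smoothness does not help, because the obstruction is the boundary condition rather than regularity, and (\ref{r2}) applied to $\psi_{2}$ only yields the rate $(t-\sigma)^{-1-\gamma/(1+\al)+\delta}$ with $\delta<\frac{1}{2(1+\al)}<\frac{\al}{1+\al}$, which is strictly worse than the target exponent. The appeal to ``analyticity of $G$ together with the integral structure of $\poch\da$'' is not an argument here: the evolution does not preserve supports, and interior regularity for this nonlocal evolution is exactly what the lemma is trying to establish, so the step is circular. (Also, for $\psi_{1}$, a uniform bound in $L^{2}$ is not ``a fortiori'' a bound in $H^{\gamma}(\ve,\omega)$; the embedding runs the other way, though this particular point could be repaired via (\ref{r2}) with $\delta=1$.)

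The paper instead localizes at the level of the equation rather than of the datum: it approximates $u_\sigma$ by $\vf^{k}\in\dd$, applies $\da$ to the equation and multiplies by a cutoff $\eta$ vanishing near $x=1$ and flat at $x=0$, so that $\da u^{k}\cdot\eta$ solves the same type of evolution equation with an $L^{2}$ commutator source $F^{k}$ (with integrable time singularity) and with initial datum $\da\vf^{k}\cdot\eta$ controlled in $L^{2}$ by $\norm{u_\sigma}_{{}_{0}H^{\al}(0,1)}$; the Duhamel formula together with (\ref{r4}) then gives the interior $H^{\gamma}$ bound, and weak lower semicontinuity concludes. That commutator argument is the idea your proposal is missing.
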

\begin{proof}
We note that $u_{\sigma} \in [L^{2}(0,1),\dd]_{\nu}$ for every $\nu \in (0,\frac{1}{2(1+\al)})$. Hence, by the theory of evolution operators $u \in C([\sigma,T];L^{2}(0,1))\cap  C((\sigma,T];\dd)\cap C^{1}((\sigma,T];L^{2}(0,1))$ and
\eqq{\norm{u(\cdot,t)}_{\dd} \leq c (t-\sigma)^{\nu-1}\norm{u_{\sigma}}_{[L^{2}(0,1),\dd]_{\nu}}.}{nowea}
We recall that the interpolation constant $c$ depends on the parameters of interpolation as well as from $\al, T$ and $b,M$ from (\ref{zals}). However, here and henceforth we neglect this dependency in notation and leave it just in the final results.
We fix $0<\ve<\omega<1$ and we set $\omega_{*}= \frac{1+\omega}{2}$. Let us discuss the approximate problem. We choose a sequence $\{\vf^{k}\}$ such that
\eqq{ \{\vf^{k}\} \subseteq \dd, \hd \vf^{k}\rightarrow u_{\sigma} \m{ in } {}_{0}H^{\al}(0,\omega_{*}) \m{ and }  \vf^{k}\rightarrow u_{\sigma} \m{ in } H^{\bar{\gamma}}(0,1) \m{ for every } \bar{\gamma} < \frac{1}{2}.}{noweb}
Then, applying (\ref{r1}) and (\ref{r2}), we obtain that the solution to
\begin{equation}\label{nowec}
 \left\{ \begin{array}{ll}
u^{k}_{t} =A_{2}(t)u^{k} & \textrm{ for } 0<x<1, \hd 0\leq \sigma<t<T,\\
u^{k}(x,\sigma) = \vf^{k}(x) & \textrm{ for } 0<x<1, \\
\end{array} \right. \end{equation}
satisfies for every $0\leq\bar{\gamma}<\bar{\gamma_{1}}<\frac{1}{2}$
\[
\norm{A_{2}(t)(u-u^{k})(\cdot,t)}_{H^{\bar{\gamma}}(0,1)} \leq c (t-\sigma)^{-1+\frac{(\bar{\gamma_{1}} - \bar{\gamma})}{1+\al}}\norm{u_{\sigma}-\vf^{k}}_{H^{\bar{\gamma_{1}}}(0,1)}.
\]
Hence, for every $0\leq\bar{\gamma}<\frac{1}{2}$
\eqq{
\poch \da u^{k} \rightarrow \poch \da u \m{ in } C_{loc}(\sigma,T;H^{\bar{\gamma}}(0,1)).
}{nowed}
Furthermore, for $k$ large enough and every $0\leq\bar{\gamma}<\bar{\gamma_{1}}<\frac{1}{2}$ we have
\eqq{
\norm{A_{2}(t) u^{k}(\cdot,t)}_{H^{\bar{\gamma}}(0,1)} \leq c (t-\sigma)^{-1+\frac{(\bar{\gamma_{1}} - \bar{\gamma})}{1+\al}}\norm{u_{\sigma}}_{H^{\bar{\gamma_{1}}}(0,1)}.
}{nowef}
We will prove the uniform bound of the sequence $\{u^{k}\}$ in more regular spaces locally on $(0,1)$.
To this end, we introduce a smooth nonnegative cut-off function $\eta$ such that $\eta(0)=\eta'(0)=0$, $\eta\equiv 1$ on~$[\ve,\omega]$, $\eta\equiv 0$ on $[\omega_{*},1]$.
We apply to (\ref{nowec}) the operator $\da$ and we multiply the obtained identity by $\eta$.

We note that for any smooth function $g$ and absolutely continuous f we have
\eqq{
g(x) \da f(x) = -\frac{\al}{\Gamma(1-\al)} \izx (x-p)^{-\al-1}(g(x)-g(p))f(p)dp + \da (fg)(x).
}{noweg}
Hence, we may write
\[
\eta \da \poch \da u^{k} = -\frac{\al}{\Gamma(1-\al)} \izx (x-p)^{-\al-1}(\eta(x)-\eta(p))\frac{\p}{\p p}\da u^{k}dp + \da (\poch\da u^{k} \cdot \eta )
\]
\[
= -\frac{\al}{\Gamma(1-\al)} \izx (x-p)^{-\al-1}(\eta(x)-\eta(p))\frac{\p}{\p p}\da u^{k}dp + \p^{\al}\poch(\eta \da u^{k}) - \p^{\al}(\eta'\da u^{k}),
\]
where we used $\eta(0) = 0$.
Hence, if we apply to (\ref{nowec}) the operator $\da$ and multiply the result by $\eta$ we get that
\begin{equation}\label{noweh}
 \left\{ \begin{array}{ll}
(\da u^{k}\cdot \eta)_{t} - A_{2}(t) (\da u^{k}\cdot\eta) =F^{k} & \textrm{ for } 0<x<1, \hd 0\leq \sigma<t<T,\\
(\da u^{k} \cdot \eta)(\cdot,\sigma) = \da\vf^{k}\cdot \eta & \textrm{ for } 0<x<1, \\
\end{array} \right. \end{equation}

where
\[
F^{k}:= -\frac{1}{s^{1+\al}(t)}\frac{\al}{\Gamma(1-\al)} \izx (x-p)^{-\al-1}(\eta(x)-\eta(p))\frac{\p}{\p p}\da u^{k}dp -\frac{1}{s^{1+\al}(t)} \p^{\al}(\eta'\da u^{k}).
\]
Let us show the uniform estimate on $L^{2}$ - norm of $F^{k}$.
We note that for every $x, p \in [0,1]$
\[
\abs{\frac{\eta(x)-\eta(p)}{x-p}} \leq \norm{\eta}_{W^{1,\infty}(0,1)},
\]
hence
\[
\frac{1}{\Gamma(1-\al)}\abs{\izx (x-p)^{-\al-1}(\eta(x)-\eta(p))A_{2}(t) u^{k}dp} \leq \norm{\eta}_{W^{1,\infty}(0,1)} I^{1-\al}\abs{A_{2}(t) u^{k}}.
\]
Since $I^{1-\al}$ is bounded on $L^{2}(0,1)$ we obtain that
\[
\norm{\frac{\al}{\Gamma(1-\al)}\izx (x-p)^{-\al-1}(\eta(x)-\eta(p))A_{2}(t) u^{k}dp }_{L^{2}(0,1)} \leq c(\ve,\omega)\norm{A_{2}(t) u^{k}}_{L^{2}(0,1)}.
\]
By Proposition \ref{eq} we may write
\[
\frac{1}{s^{1+\al}(t)}\norm{\p^{\al}(\eta'\da u^{k})}_{L^{2}(0,1)} \leq \frac{\ca}{s^{1+\al}(t)} \norm{(\eta'\da u^{k})}_{{}_{0}H^{\al}(0,1)} \leq c(\ve,\omega)\norm{A_{2}(t) u^{k}}_{L^{2}(0,1)}.
\]
Combining the last two estimates and (\ref{nowef}) we obtain that for every $0<\bar{\gamma} < \frac{1}{2}$
\eqq{
\norm{F^{k}(\cdot,t)}_{L^{2}(0,1)} \leq c(\ve,\omega)(t-\sigma)^{\frac{\bar{\gamma}}{(1+\al)}-1}\norm{u_{\sigma}}_{H^{\bar{\gamma}}(0,1)}.
}{nowej}
Moreover $F^{k} \in C((\sigma,T];L^{2}(0,1))\cap L^{1}(\sigma,T;L^{2}(0,1))$. Since $\eta(0)=\eta(1)=0$, by regularity of $u^{k}$ we obtain that $\da u^{k} \cdot \eta \in C([\sigma,T];L^{2}(0,1))\cap C((\sigma,T];\dd) \cap C^{1}((\sigma,T];L^{2}(0,1))$. Hence, by \cite[Corollary 6.2.4] {Lunardi} $\da u^{k} \cdot \eta$ satisfies the integral equality
\eqq{
(\da u^{k} \cdot \eta)(x,t) = G(t,\sigma)(\da \vf^{k} \cdot \eta)(x) + \int_{\sigma}^{t}G(t,\tau)F^{k}(x,\tau)d\tau.
}{nowei}
We fix $\gamma \in (0,1+\al)$, then
\[
\norm{(\da u^{k} \cdot \eta)(\cdot,t)}_{[L^{2},\dd]_{\frac{\gamma}{1+\al}}}\hspace{-0.4 cm} \leq \norm{G(t,\sigma)(\da \vf^{k} \cdot \eta)}_{[L^{2},\dd]_{\frac{\gamma}{1+\al}}} \hspace{-0.4 cm} + \int_{\sigma}^{t}\norm{G(t,\tau)F^{k}(\cdot,\tau)}_{[L^{2},\dd]_{\frac{\gamma}{1+\al}}} \hspace{-0.2 cm} d\tau
\]
and we apply estimate (\ref{r4}) to obtain
\[
\norm{(\da u^{k} \cdot \eta)(\cdot,t)}_{H^{\gamma}(0,1)} \leq c(t-\sigma)^{-\frac{\gamma}{1+\al}}\norm{\da \vf^{k} \cdot \eta}_{L^{2}(0,1)} + c \int_{\sigma}^{t}(t-\tau)^{-\frac{\gamma}{1+\al}}\norm{F^{k}(\cdot,\tau)}_{L^{2}(0,1)}d\tau.
\]
From (\ref{nowej}) we infer that for every $0<\bar{\gamma}<\frac{1}{2}$ there holds
\[
\norm{(\da u^{k} \cdot \eta)(\cdot,t)}_{H^{\gamma}(0,1)}
\]
\[
 \leq c(t-\sigma)^{-\frac{\gamma}{1+\al}}\norm{\p^{\al} \vf^{k}}_{L^{2}(0,\omega_{*})} +c(\ve,\omega) \int_{\sigma}^{t}(t-\tau)^{-\frac{\gamma}{1+\al}}(\tau-\sigma)^{\frac{\bar{\gamma}}{1+\al}-1}d\tau
\norm{u_{\sigma}}_{H^{\bar{\gamma}}(0,1)}.
\]
By (\ref{noweb}) we get that
\[
\norm{(\da u^{k} \cdot \eta)(\cdot,t)}_{H^{\gamma}(0,1)} \leq c(\ve,\omega)[(t-\sigma)^{-\frac{\gamma}{1+\al}}\norm{u_{\sigma}}_{{}_{0}H^{\al}(0,\omega_{*})} +(t-\sigma)^{\frac{\bar{\gamma}-\gamma}{1+\al}}\norm{u_{\sigma}}_{{}_{0}H^{\bar{\gamma}}(0,1)}].
\]
Since $\gamma$ is an arbitrary number from the interval $(0,1+\al)$, the estimate above implies the following: for every $\gamma_{1} < \al$
\[
\norm{(\poch \da u^{k} \cdot \eta)(\cdot,t)}_{H^{\gamma_{1}}(0,1)} \leq c(\ve,\omega)(t-\sigma)^{-\frac{\gamma_{1}+1}{1+\al}}\norm{u_{\sigma}}_{{}_{0}H^{\al}(0,1)}.
\]
Since we obtained a uniform estimate, in view of (\ref{nowed}) we get that, on the subsequence
\[
\poch \da u^{k}(\cdot,t)\rightharpoonup \poch \da u(\cdot,t) \m{ in } H^{\gamma_{1}}(\ve,\omega) \m{ for every} \hd t \in (\sigma,T).
\]
Furthermore, by weak lower semi-continuity of the norm, we arrive at the estimate: for every $0<\gamma_{1}<\al$, for every $t \in (\sigma,T)$
\[
\norm{\poch \da u(\cdot,t)}_{H^{\gamma_{1}}(\ve,\omega)} \leq c(t-\sigma)^{-\frac{\gamma_{1}+1}{1+\al}}\norm{u_{\sigma}}_{{}_{0}H^{\al}(0,1)},
\]
where $c=c(\al,b,M,\ve,\omega,T,\gamma_{1})$. This together with (\ref{zals}) finishes the proof.
\end{proof}
Now, we present a more technical result which is necessary to increase the regularity of solution in the case $0<\al \leq \frac{1}{2}$.
\begin{lem}\label{lemusi2}
Let $0<\al \leq \frac{1}{2}$. Let us assume that $u_{\sigma} \in H^{\beta}_{loc}(0,1) \cap H^{\bar{\gamma}}(0,1)$ for fixed $\frac{1}{2}<\beta<1$ and fixed $0<\bar{\gamma} < \frac{1}{2}$. We denote by $u$ the solution to the equation
\begin{equation}\label{polc}
 \left\{ \begin{array}{ll}
u_{t} =A_{2}(t)u & \textrm{ for } 0<x<1, \hd 0\leq \sigma<t<T,\\
u(x,\sigma) = u_{\sigma}(x) & \textrm{ for } 0<x<1, \\
\end{array} \right. \end{equation}
given by the evolution operator generated by the family $A_{2}(t)$. Then, for every $\max\{\beta-\al, \beta - \bar{\gamma}\}<\beta_{1}<\beta$, for every $0<\ve<\omega<1$, there exists a positive constant $c=c(\al,b,M,T,\ve,\omega,\beta,\beta_{1})$, such that there holds
\[
\norm{A_{2}(t)u(\cdot,t)}_{H^{\beta_{1}}(\ve,\omega)} \leq c(t-\sigma)^{-\frac{\beta_{1}-\beta+\al+1}{1+\al}}(\norm{u_{\sigma}}_{H^{\beta}(\frac{\ve}{2},\frac{1+\omega}{2})}+\norm{u_{\sigma}}_{H^{\bar{\gamma}}(0,1)}).
\]
\end{lem}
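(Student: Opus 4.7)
The plan is to mimic the scheme of Lemma \ref{lemusi}, adapting it to the split regularity input $u_\sigma\in H^\beta_{loc}\cap H^{\bar\gamma}$ with $\beta>\frac{1}{2}>\bar\gamma$. First I would fix a smooth cut-off $\eta$ with $\eta(0)=\eta'(0)=0$, $\eta\equiv 1$ on $[\ve,\omega]$ and $\supp\eta\subseteq[\frac{\ve}{2},\omega_*]$ for $\omega_*=(1+\omega)/2$, then pick an approximating sequence $\{\vf^k\}\subseteq\dd$ with $\vf^k\to u_\sigma$ both in $H^\beta(\frac{\ve}{2},\omega_*)$ and in $H^{\bar\gamma}(0,1)$. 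Such $\vf^k$ exist because $\beta>\frac{1}{2}$ makes $u_\sigma$ continuous on the local set, so it can be smoothly truncated and mollified strictly inside $(0,1)$. Let $u^k$ solve (\ref{polc}) with initial datum $\vf^k$, and then repeat the manipulation of Lemma \ref{lemusi} --- applying $\da$ to the equation, multiplying by $\eta$ and using (\ref{noweg}) --- to obtain the integral representation (\ref{nowei}) for $\da u^k\cdot\eta$ with source $F^k$ of the same form as before.

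The non-trivial step is the estimation of the two terms. For the initial term, $\da\vf^k\cdot\eta$ should lie in $[L^2(0,1),\dd]_{(\beta-\al)/(1+\al)}$ uniformly in $k$. Writing $\da\vf^k=I^{1-\al}\vf^k_x$ and splitting $\vf^k_x=\chi_{[0,\ve/2]}\vf^k_x+\chi_{[\ve/2,1]}\vf^k_x$, on $\supp\eta\subseteq[\frac{\ve}{2},\omega_*]$ the first contribution involves a kernel $(x-p)^{-\al}$ that is bounded and hence, by Hardy-type arguments together with the global $L^2$ bound of $\vf^k$, is controlled by $\norm{\vf^k}_{H^{\bar\gamma}(0,1)}$; the second contribution is a local fractional integral of an $H^{\beta-1}$ function, which by Corollary \ref{eqc} lies in $H^{\beta-\al}$ locally with norm controlled by $\norm{\vf^k}_{H^\beta(\frac{\ve}{2},\omega_*)}$. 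For the source, $F^k$ is dominated in $L^2(0,1)$ by $c\norm{A_2(\tau)u^k(\cdot,\tau)}_{L^2(0,1)}$; decomposing $\vf^k=\vf^{k,1}+\vf^{k,2}$ with $\vf^{k,1}$ a smooth cut-off agreeing with $\vf^k$ on a neighbourhood of $\supp\eta$ (hence globally in $H^\beta$) and $\vf^{k,2}$ the residue (globally in $H^{\bar\gamma}$), the standard estimate (\ref{r2}) yields
\[
\norm{F^k(\cdot,\tau)}_{L^2(0,1)}\leq c(\tau-\sigma)^{\frac{\beta}{1+\al}-1}\norm{u_\sigma}_{H^\beta(\frac{\ve}{2},\omega_*)}+c(\tau-\sigma)^{\frac{\bar\gamma}{1+\al}-1}\norm{u_\sigma}_{H^{\bar\gamma}(0,1)}.
\]

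Inserting these bounds into the integral representation and applying (\ref{r4}) with target index $\theta=(\beta_1+1)/(1+\al)$, then integrating in $\tau$, produces the announced estimate: the two hypotheses $\beta_1>\beta-\al$ and $\beta_1>\beta-\bar\gamma$ are exactly what make the resulting Beta integrals convergent and yield the exponent $-(\beta_1-\beta+\al+1)/(1+\al)$. Finally, letting $k\to\infty$ and using weak lower semi-continuity of the $H^{\beta_1}(\ve,\omega)$ norm, exactly as in Lemma \ref{lemusi}, concludes the argument. The main obstacle will be the careful bookkeeping of the two decompositions --- of $\vf^k_x$ for the initial term and of $\vf^k$ itself for the source term --- so that local and global regularity appear with the right exponents without cross-contamination; moreover, when $\beta_1\geq \al$ the interpolation inequality (\ref{r4}) cannot be invoked in a single shot (it requires $\theta<1$), so a short iteration of the cut-off procedure is needed, each pass gaining $\al$ in local regularity and providing an improved initial condition for the next pass until the full admissible range of $\beta_1$ is covered.
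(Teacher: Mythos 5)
Your overall architecture (approximation of $u_{\sigma}$ by $\{\vf^{k}\}\subseteq\dd$, localization by a cut-off supported in $[\frac{\ve}{2},\omega_{*}]$, the Duhamel representation, the estimates (\ref{r1})--(\ref{r4}), and weak lower semicontinuity at the end) coincides with the paper's. The decisive step, however, is different, and that is where the gap lies: you apply $D^{\al}$ to the equation and try to feed the extra regularity of the data into the initial term via $\da\vf^{k}\cdot\eta\in[L^{2}(0,1),\dd]_{(\beta-\al)/(1+\al)}$. With this scheme the target index in (\ref{r4}) must be $\theta=\frac{\beta_{1}+1}{1+\al}$, which is admissible only for $\beta_{1}<\al$; since $\al\leq\frac{1}{2}<\beta$, the admissible range $(\max\{\beta-\al,\beta-\bar{\gamma}\},\beta)$ always contains values $\beta_{1}\geq\al$, so the single-shot argument never covers the full statement. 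The iteration you invoke to repair this does not work as described: iterating the cut-off procedure with $D^{\al},D^{2\al},\dots$ requires at the $j$-th pass that the localized datum $D^{j\al}\vf^{k}\cdot\eta$ be uniformly bounded in $L^{2}(0,1)$, and once $j\al>\beta$ this exceeds the local regularity $H^{\beta}$ of $u_{\sigma}$ --- and these are exactly the passes needed to reach $\beta_{1}$ close to $\beta$ when $\beta/\al\notin\mathbb{N}$. There is also a quantitative problem already in the range $\beta_{1}<\al$: in your Duhamel integral the source contributes $(t-\sigma)^{-(\beta_{1}+1-\bar{\gamma})/(1+\al)}$, which is strictly more singular than the claimed $(t-\sigma)^{-(\beta_{1}+1+\al-\beta)/(1+\al)}$ whenever $\bar{\gamma}<\beta-\al$, a case permitted by the hypotheses; and the conditions $\beta_{1}>\beta-\al$, $\beta_{1}>\beta-\bar{\gamma}$ are not, as you assert, the convergence conditions of your Beta integrals (those only require $\theta<1$ and $\bar{\gamma}>0$).

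The paper's proof rests on a single device that your proposal is missing: one applies to the equation the fractional derivative $D^{\beta}$, of order equal to the \emph{local regularity of the datum}, rather than $D^{\al}$ or its iterates. Then $D^{\beta}\vf^{k}\cdot\eta$ lands exactly in $L^{2}(0,1)$ uniformly in $k$; one application of (\ref{r4}) at level $\theta=\frac{\gamma_{1}+1}{1+\al}<1$ with $\gamma_{1}=\beta_{1}-\beta+\al<\al$ controls $\poch D^{\beta}u^{k}\cdot\eta$ in $H^{\gamma_{1}}(0,1)$, and the remaining $\beta-\al$ derivatives are recovered by inverting the time-independent operator $\p^{\beta-\al}$ through the identity $\eta\poch D^{\beta}u^{k}=(\mbox{commutator})+\p^{\beta-\al}(\eta\poch\da u^{k})$. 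It is in estimating that commutator that the hypothesis $\beta_{1}>\beta-\bar{\gamma}$ (equivalently $\gamma_{1}>\al-\bar{\gamma}$) enters, and the exponent $-\frac{\beta_{1}-\beta+\al+1}{1+\al}$ comes out precisely because the Duhamel integral is taken only at the level $\theta<1$. The price of this route is the computation of the new source term ($F^{k}_{2}$ and the kernel $H(x,p)$), which has no counterpart in your scheme; without it, or an equivalent mechanism, the lemma is not proved.
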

\begin{proof}
We will modify the proof of Lemma \ref{lemusi}. At first we fix $0<\ve<\omega<1$ and we set $\omega_{*}=\frac{1+\omega}{2}$.
We choose a sequence $\{\vf^{k}\} \subseteq \dd$ such that
\eqq{\vf^{k}(0)=0, \hd \vf^{k}\rightarrow u_{\sigma} \m{ in } H^{\beta}(\frac{\ve}{2},\omega_{*}) \m{ and }\hd \vf^{k}\rightarrow u_{\sigma} \m{ in } H^{\bar{\gamma}}(0,1).}{nowebc}
As in the proof of Lemma \ref{lemusi}, we obtain that the solution to
\begin{equation}\label{nowecc}
 \left\{ \begin{array}{ll}
u^{k}_{t} =A_{2}(t)u^{k} & \textrm{ for } 0<x<1, \hd 0\leq \sigma<t<T,\\
u^{k}(x,\sigma) = \vf^{k}(x) & \textrm{ for } 0<x<1, \\
\end{array} \right. \end{equation}
satisfies for every $0\leq \bar{\gamma_{1}}<\bar{\gamma}$
\[
\norm{A_{2}(t)(u-u^{k})(\cdot,t)}_{H^{\bar{\gamma_{1}}}(0,1)} \leq c (t-\sigma)^{-1+\frac{\bar{\gamma} - \bar{\gamma_{1}}}{1+\al}}\norm{u_{\sigma}-\vf^{k}}_{H^{\bar{\gamma}}(0,1)}.
\]
Hence, for every $0\leq\bar{\gamma_{1}}<\bar{\gamma}$ there holds the following limit
\eqq{
\poch \da u^{k} \rightarrow \poch \da u \m{ in } C_{loc}(\sigma,T;H^{\bar{\gamma_{1}}}(0,1)).
}{nowedc}
Moreover, for $k$ large enough and every $0\leq\bar{\gamma_{1}}<\bar{\gamma}$ we get
\eqq{
\norm{A_{2}(t) u^{k}(\cdot,t)}_{H^{\bar{\gamma_{1}}}(0,1)} \leq c (t-\sigma)^{-1+\frac{\bar{\gamma} - \bar{\gamma_{1}}}{1+\al}}\norm{u_{\sigma}}_{H^{\bar{\gamma}}(0,1)}.
}{nowefc}
We introduce a smooth nonnegative function $\eta\equiv 0$ on $[0,\ve/2]$, $\eta\equiv 1$ on $[\ve,\omega]$, $\eta\equiv 0$ on $[\omega_{*},1]$.
We apply to (\ref{nowecc}) the operator $D^{\beta}$ and we multiply the result by $\eta$.
Making use of identity (\ref{noweg}) we may calculate as follows
\[
\eta D^{\beta} \poch \da u^{k} = -\frac{\beta}{\Gamma(1-\beta)} \izx (x-p)^{-\beta-1}(\eta(x)-\eta(p))\frac{\p}{\p p}\da u^{k}dp + D^{\beta} (\poch\da u^{k} \cdot \eta ).
\]
Using $\eta(0)=0$ in the first identity and $(D^{\beta}u^{k})(0,t) = 0$ in the second one, we have
\[
D^{\beta} (\poch\da u^{k} \cdot \eta ) = \p^{\beta} (\p^{\al} \poch u^{k} \cdot \eta ) =  \p^{\beta} (\p^{\al} D^{1-\beta}D^{\beta} u^{k} \cdot \eta ) =\p^{\beta} ( D^{1+\al-\beta}D^{\beta} u^{k} \cdot \eta ).
\]
We apply again (\ref{noweg}) to get
\[
 D^{1+\al-\beta}D^{\beta} u^{k} \cdot \eta  = \frac{\beta-\al-1}{\Gamma(\beta-\al)}\izx (x-p)^{\beta-\al-2}(\eta(x)-\eta(p))D^{\beta} u^{k}dp + D^{1+\al-\beta}(\eta D^{\beta}u^{k}).
\]
Finally, we note that
\[
\p^{\beta}D^{1+\al-\beta}(\eta D^{\beta}u^{k}) = \poch I^{1-\beta}I^{\beta-\al}\poch(\eta D^{\beta}u^{k}) = \poch I^{1-\al}\poch(\eta D^{\beta}u^{k}) = \p^{\al}\poch(\eta D^{\beta}u^{k}),
\]
where in the last equality we used the fact that $\eta\equiv 0$ on $[0,\ve/2]$.
Hence, if we apply to (\ref{nowecc}) the operator $D^{\beta}$ and multiply the obtained identity by $\eta$, we arrive at
\begin{equation}\label{nowehc}
 \left\{ \begin{array}{ll}
(D^{\beta} u^{k}\cdot \eta)_{t} - A_{2}(t) (D^{\beta} u^{k}\cdot\eta) =F^{k} & \textrm{ for } 0<x<1, \hd 0\leq \sigma<t<T,\\
(D^{\beta} u^{k} \cdot \eta)(\cdot,\sigma) = D^{\beta}\vf^{k}\cdot \eta & \textrm{ for } 0<x<1, \\
\end{array} \right. \end{equation}

where
\[
F^{k}:= -\frac{1}{s^{1+\al}(t)}\frac{\beta}{\Gamma(1-\beta)} \izx (x-p)^{-\beta-1}(\eta(x)-\eta(p))\frac{\p}{\p p}\da u^{k}dp
\]
\[
 +\frac{1}{s^{1+\al}(t)} \frac{\beta-\al-1}{\Gamma(\beta-\al)}\p^{\beta} \izx (x-p)^{\beta-\al-2}(\eta(x)-\eta(p))D^{\beta} u^{k}dp =:F^{k}_{1}+F^{k}_{2}.
\]
We will prove a uniform estimate of the $L^{2}$-norm of $F^{k}$. At first, we note that, as in the proof of (\ref{nowej}), by continuity of fractional integral in $L^{2}$ we obtain
\[
\norm{F^{k}_{1}(\cdot,t) }_{L^{2}(0,1)} \leq c(\ve,\omega)\norm{A_{2}(t) u^{k}}_{L^{2}(0,1)}.
\]
To estimate $F^{k}_{2}$ we note that
\[
\Gamma(1-\beta)\p^{\beta} \izx (x-p)^{\beta-\al-2}(\eta(x)-\eta(p))D^{\beta}u^{k}(p)dp
\]
\[
 =
\poch \izx (x-p)^{-\beta}\int_{0}^{p}(p-\tau)^{\beta-\al-2}(\eta(p) - \eta(\tau))D^{\beta}u^{k}(\tau)d\tau dp
\]
\[
= \poch \izx D^{\beta}u^{k}(\tau) \int_{\tau}^{x}(x-p)^{-\beta}(p-\tau)^{\beta-\al-2}(\eta(p) - \eta(\tau))dp d\tau = \podst{p=\tau+w(x-\tau)}{dp = (x-\tau)dw}
\]
\[
= \poch \izx D^{\beta-\al}D^{\al}u^{k}(\tau) (x-\tau)^{-\al-1} \izj (1-w)^{-\beta}w^{\beta-\al-2}(\eta(\tau+w(x-\tau)) - \eta(\tau))dw d\tau
\]
\[
= \poch \hspace{-0.1 cm}\izx \hspace{-0.2 cm}\int_{0}^{\tau}\hspace{-0.2 cm}\frac{(\tau-p)^{\al-\beta}}{\Gamma(1-\beta+\al)}\frac{\p}{\p p} D^{\al}u^{k}(p)dp (x-\tau)^{-\al-1}\hspace{-0.2 cm} \izj \frac{(\eta(\tau+w(x-\tau)) - \eta(\tau))}{(1-w)^{\beta}w^{2+\al-\beta}}dw d\tau.
\]
Applying the Fubini theorem and the substitution $\tau = p + a(x-p)$ we obtain further
\[
\Gamma(1-\beta)\p^{\beta} \izx (x-p)^{\beta-\al-2}(\eta(x)-\eta(p))D^{\beta}u^{k}(p)dp
\]
\[
= \frac{1}{\Gamma(1-\beta+\al)}\poch \izx \frac{\p}{\p p} D^{\al}u^{k}(p)(x-p)^{-\beta} H(x,p)dp,
\]
where
\[
H(x,p)\hspace{-0.1 cm} = \hspace{-0.2 cm}
\izj a^{\al-\beta}(1-a)^{-\al-1} \hspace{-0.2 cm} \izj \frac{\eta(p+a(x-p)+w(x-p)(1-a)) - \eta(p+a(x-p))}{(1-w)^{\beta}w^{2+\al-\beta}}dw da.
\]
We note that for every $a,w,p,x \in (0,1)$
\eqq{
\abs{\frac{(\eta(p+a(x-p)+w(x-p)(1-a)) - \eta(p+a(x-p)))}{(x-p)(1-a)w}} \leq \norm{\eta}_{W^{1,\infty}(0,1)}.
}{etap}
Hence,
\eqq{
\abs{H(x,p)} \leq  \norm{\eta}_{W^{1,\infty}(0,1)}\abs{x-p} B(1-\al,\al-\beta+1)B(\beta+1,\beta-\al)  \rightarrow 0 \m{ as } p\rightarrow x.
}{hza}
Thus, performing differentiation we arrive at the following identity
\[
\Gamma(1-\beta+\al)\Gamma(1-\beta)\p^{\beta} \izx (x-p)^{\beta-\al-2}(\eta(x)-\eta(p))D^{\beta}u^{k}(p)dp
\]
\[
=  \izx \frac{\p}{\p p} D^{\al}u^{k}(p)(x-p)^{-\beta} \poch H(x,p)dp-\beta  \izx \frac{\p}{\p p} D^{\al}u^{k}(p)(x-p)^{-\beta} \frac{H(x,p)}{x-p}dp.
\]
We note that for every $0\leq p < x \leq 1$
\[
\abs{ \poch H(x,p)} + \abs{\frac{H(x,p)}{x-p}} \leq c(\al,\beta,\ve,\omega).
\]
Indeed, we may show in a similar way as in (\ref{hza}) that for $p,x \in (0,1)$
\[
\abs{\poch H(x,p)} \leq c(\al,\beta)\norm{\eta}_{W^{2,\infty}(0,1)}.
\]
Finally, we arrive at
\[
\norm{F^{k}_{2}(\cdot,t)}_{L^{2}(0,1)}
\leq c(\al,\beta,\ve,\omega)\norm{I^{1-\beta}\abs{A_{2}(t) u^{k}}}_{L^{2}(0,1)} \leq c(\al,\beta,\ve,\omega)\norm{A_{2}(t) u^{k}}_{L^{2}(0,1)}.
\]
For clarity we neglect in notation the dependance in $c$ of other constants then $\omega,\ve$.
Hence, in view of (\ref{nowefc}) we obtain that
\eqq{
\norm{F^{k}(\cdot,t)}_{L^{2}(0,1)} \leq c(\ve,\omega)(t-\sigma)^{\frac{\bar{\gamma}}{(1+\al)}-1}\norm{u_{\sigma}}_{H^{\bar{\gamma}}(0,1)}.
}{nowejc}
Making use of the regularity of $u^{k}$ and the fact that $\eta(0)=\eta(1)=0$ we may apply \cite[Corollary 6.2.4] {Lunardi} to deduce that $D^{\beta} u^{k} \cdot \eta$ satisfies the integral identity
\eqq{
(D^{\beta} u^{k} \cdot \eta)(x,t) = G(t,\sigma)(D^{\beta} \vf^{k} \cdot \eta)(x) + \int_{\sigma}^{t}G(t,\tau)F^{k}(x,\tau)d\tau.
}{noweic}
We fix $\gamma \in (0,1+\al)$, then we may write
\[
\norm{(D^{\beta} u^{k} \cdot \eta)(\cdot,t)}_{[L^{2},\dd]_{\frac{\gamma}{1+\al}}} \leq \norm{G(t,\sigma)(D^{\beta} \vf^{k} \cdot \eta)}_{[L^{2},\dd]_{\frac{\gamma}{1+\al}}} + \int_{\sigma}^{t}\norm{G(t,\tau)F^{k}(\cdot,\tau)}_{[L^{2},\dd]_{\frac{\gamma}{1+\al}}}d\tau.
\]
Hence,
\[
\norm{(D^{\beta} u^{k} \cdot \eta)(\cdot,t)}_{H^{\gamma}(0,1)} \leq c(t-\sigma)^{-\frac{\gamma}{1+\al}}\norm{D^{\beta} \vf^{k} \cdot \eta}_{L^{2}(0,1)} +c \int_{\sigma}^{t}(t-\tau)^{-\frac{\gamma}{1+\al}}\norm{F^{k}(\cdot,\tau)}_{L^{2}(0,1)}d\tau,
\]
where we applied estimate (\ref{r4}) for the first term on the right hand side and then we did it again for the second one.
Applying (\ref{nowejc}) we obtain
\[
\norm{(D^{\beta} u^{k} \cdot \eta)(\cdot,t)}_{H^{\gamma}(0,1)}
\]
\[
\leq c(t-\sigma)^{-\frac{\gamma}{1+\al}}\norm{D^{\beta} \vf^{k}\cdot\eta}_{L^{2}(0,1)} +c \int_{\sigma}^{t}(t-\tau)^{-\frac{\gamma}{1+\al}}(\tau-\sigma)^{\frac{\bar{\gamma}}{1+\al}-1}d\tau
\norm{u_{\sigma}}_{H^{\bar{\gamma}}(0,1)}.
\]
We note that by the identity (\ref{noweg}) we have
\[
D^{\beta} \vf^{k}\cdot\eta = -\frac{\beta}{\Gamma(1-\beta)}\izx (x-p)^{-\beta}\frac{\eta(x)-\eta(p)}{x-p} \vf^{k}(p)dp + D^{\beta}(\vf^{k}\cdot \eta).
\]
Hence,
\[
\norm{D^{\beta} \vf^{k}\cdot\eta}_{L^{2}(0,1)} \leq c(\beta,\ve,\omega)\norm{\vf^{k}}_{L^{2}(0,1)} + \norm{\vf^{k}\eta}_{H^{\beta}(0,1)}
\]
and by (\ref{nowebc})
\[
\norm{D^{\beta} \vf^{k}\cdot\eta}_{L^{2}(0,1)} \leq c(\beta,\ve,\omega)(\norm{u_{\sigma}}_{H^{\beta}(\frac{\ve}{2},\omega_{*})}+\norm{u_{\sigma}}_{\ld}).
\]
Thus, we get that
\[
\norm{(D^{\beta} u^{k} \cdot \eta)(\cdot,t)}_{H^{\gamma}(0,1)} \leq c(t-\sigma)^{-\frac{\gamma}{1+\al}}(\norm{u_{\sigma}}_{H^{\beta}(\frac{\ve}{2},\omega_{*})}+\norm{u_{\sigma}}_{\ld}) + c(t-\sigma)^{\frac{\bar{\gamma}-\gamma}{1+\al}}\norm{u_{\sigma}}_{{}_{0}H^{\bar{\gamma}}(0,1)}.
\]
Since $\gamma$ is an arbitrary number from the interval $(0,1+\al)$, the estimate above implies the following: for every $\gamma_{1} < \al$
\eqq{
\norm{(\poch D^{\beta} u^{k} \cdot \eta)(\cdot,t)}_{H^{\gamma_{1}}(0,1)} \leq c(\ve,\omega)(t-\sigma)^{-\frac{\gamma_{1}+1}{1+\al}}
(\norm{u_{\sigma}}_{H^{\beta}(\frac{\ve}{2},\omega_{*})}+\norm{u_{\sigma}}_{{}_{0}H^{\bar{\gamma}}(0,1)}).
}{nowekd}
We will show that this leads to
\eqq{
\norm{\eta\poch \da u^{k}(\cdot,t)}_{H^{\beta_{1}}(0,1)} \leq  c(t-\sigma)^{-\frac{\beta_{1}-\beta+\al+1}{1+\al}}
(\norm{u_{\sigma}}_{H^{\beta}(\frac{\ve}{2},\omega_{*})}+\norm{u_{\sigma}}_{{}_{0}H^{\bar{\gamma}}(0,1)} )
}{nowekc}
for every $\max\{\beta-\al,\beta-\bar{\gamma}\}<\beta_{1}<\beta$ and where $c=c(\al,b,M,T,\ve,\omega,\beta_{1},\beta)$.
Indeed, we have
\[
\poch D^{\beta} u^{k} = \poch D^{\beta-\al}\da u^{k} = \eta \p^{\beta-\al}\p^{1-(\beta-\al)}D^{\beta-\al}\da u^{k} =  \p^{\beta-\al} \poch \da u^{k}.
\]
Hence,
\eqq{
\eta\poch D^{\beta} u^{k} = \frac{\al-\beta}{\Gamma(1-(\al-\beta))}\izx (x-p)^{\al-\beta}\frac{\eta(x)-\eta(p)}{x-p}\poch \da u^{k}(p)dp + \p^{\beta-\al}(\eta \poch \da u^{k}).
}{noweke}
Let us estimate $H^{\gamma_{1}}$ - norm of the second term on the right hand side for $\gamma_{1}~<~\al$.
In view of Proposition \ref{eq}, it is enough to estimate the $L^{2}$- norm of $\p^{\gamma_{1}}\izx (x-p)^{\al-\beta-1}(\eta(x)-\eta(p))\poch \da u^{k}(p)dp$.
We note that
\[
\Gamma(1-\gamma_{1})\p^{\gamma_{1}} \izx (x-p)^{\al-\beta-1}(\eta(x)-\eta(p))\poch \da u^{k}(p)dp
\]
\[
 =
\poch \izx (x-p)^{-\gamma_{1}}\int_{0}^{p}(p-\tau)^{\al-\beta-1}(\eta(p) - \eta(\tau))\poch \da u^{k}(\tau)d\tau dp
\]
\[
= \poch \izx \poch \da u^{k}(\tau) \int_{\tau}^{x}(x-p)^{-\gamma_{1}}(p-\tau)^{\al-\beta-1}(\eta(p) - \eta(\tau))dp d\tau = \podst{p=\tau+w(x-\tau)}{dp = (x-\tau)dw}
\]
\[
= \poch \izx \poch \da u^{k}(\tau)(x-\tau)^{\al-\beta-\gamma_{1}} \izj (1-w)^{-\gamma_{1}}w^{\al-\beta-1}(\eta(\tau+w(x-\tau)) - \eta(\tau))dw d\tau
\]
We note that we may differentiate under the integral sign. Indeed,
\[
\abs{\poch \da u^{k}(\tau)(x-\tau)^{\al-\beta-\gamma_{1}} \izj (1-w)^{-\gamma_{1}}w^{\al-\beta-1}(\eta(\tau+w(x-\tau)) - \eta(\tau))dw}
\]
\[
 \leq \norm{\eta}_{W^{1,\infty}(0,1)} B(1-\gamma_{1},1+\al-\beta)\abs{\poch \da u^{k}(\tau) (x-\tau)^{1+\al-\beta-\gamma_{1}}}\rightarrow 0 \m{ as } \tau\rightarrow x^{-}.
\]
Thus, proceeding with differentiation, we have
\[
\Gamma(1-\gamma_{1})\p^{\gamma_{1}} \izx (x-p)^{\al-\beta-1}(\eta(x)-\eta(p))\poch \da u^{k}(p)dp
\]
\[
= \izx \poch \da u^{k}(\tau) (x-\tau)^{\al-\beta-\gamma_{1}} \izj (1-w)^{-\gamma_{1}}w^{\al-\beta}\eta'(\tau+w(x-\tau))dw d\tau
\]
\[
+(\al-\beta-\gamma_{1})\izx \poch \da u^{k}(\tau) (x-\tau)^{\al-\beta-\gamma_{1}-1} \izj (1-w)^{-\gamma_{1}}w^{\al-\beta-1}(\eta(\tau+w(x-\tau)) - \eta(\tau))dw.
\]
Thus, estimating the $L^{2}$- norm of expression above we arrive at
\[
\norm{\izx (x-p)^{\al-\beta-1}(\eta(x)-\eta(p))\poch \da u^{k}(p)dp}_{H^{\gamma_{1}}(0,1)}
\]
\[
\leq c(\al,\beta,\gamma_{1})\norm{\eta}_{W^{1,\infty}(0,1)}\norm{I^{1+\al-\beta-\gamma_{1}}\abs{\poch \da u^{k}}}_{L^{2}(0,1)}\leq c(\al,\beta,\gamma_{1},\ve,\omega)\norm{\poch \da u^{k}}_{L^{2}(0,1)},
\]
where in the last estimate we applied boundedness of fractional integral in $L^{2}$.
Making use of estimates (\ref{nowefc}) and (\ref{nowekd}) in identity (\ref{noweke}) we obtain that for any $\max\{0,\al-\bar{\gamma}\}<\gamma_{1} < \al$, there holds
\[
\norm{\p^{\beta-\al}(\eta \poch \da u^{k})}_{H^{\gamma_{1}}(0,1)} \leq c(t-\sigma)^{-\frac{\gamma_{1}+1}{1+\al}}
(\norm{u_{\sigma}}_{H^{\beta}(\frac{\ve}{2},\omega_{*})}+\norm{u_{\sigma}}_{{}_{0}H^{\bar{\gamma}}(0,1)}).
\]
Hence, for any $\max\{0,\al-\bar{\gamma}\}<\gamma_{1}<\al$ we have
\[
\norm{\p^{\gamma_{1}}D^{\beta-\al}(\eta \poch \da u^{k})}_{L^{2}(0,1)} \leq c(t-\sigma)^{-\frac{\gamma_{1}+1}{1+\al}}
(\norm{u_{\sigma}}_{H^{\beta}(\frac{\ve}{2},\omega_{*})}+\norm{u_{\sigma}}_{{}_{0}H^{\bar{\gamma}}(0,1)}).
\]
Taking $\gamma_{1} = \beta_{1}-\beta+\al$, where $\max\{\beta-\al,\beta-\bar{\gamma}\}<\beta_{1}<\beta$  and applying Proposition~\ref{eq} we arrive at (\ref{nowekc}). Estimate (\ref{nowekc}) together with weak lower semi-continuity of the norm  finishes the proof.
\end{proof}

After having established Lemma \ref{lemusi} and Lemma \ref{lemusi2}, we have to state one additional lemma.

\begin{lem}\label{local}
Let $f\in {}_{0}H^{\al}(0,1)$ for $\al \in (0,1)$ and $\p^{\al}f \in H^{\beta}_{loc}(0,1)$ for $\beta \in (\frac{1}{2},1]$. Then $f \in H^{\beta+\al}_{loc}(0,1)$ and for every $0<\delta <\omega<1$ there exists a positive constant $c=c(\al,\beta,\delta,\omega)$ such that
\eqq{
\norm{f}_{H^{\beta+\al}(\delta,\omega)} \leq c(\norm{f}_{{}_{0}H^{\al}(0,\omega)} + \norm{\p^{\al}f}_{H^{\beta}(\frac{\delta}{2},\omega)}).
}{nowelc}
\end{lem}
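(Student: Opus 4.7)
My plan is to localize $f$ with a smooth cutoff and reduce the estimate to Corollary \ref{eqc}. I choose $\eta \in C^{\infty}([0,1])$ with $\eta\equiv 1$ on $[\delta,\omega]$, $\eta\equiv 0$ on $[0,\delta/2]$, and $\eta$ transitioning smoothly to $0$ slightly beyond $\omega$ (any minor slack in the right-hand support of $\eta$ is absorbed in $c$, which already depends on $\delta,\omega$). Since $\eta\equiv 1$ on $[\delta,\omega]$, we have $\norm{f}_{H^{\al+\beta}(\delta,\omega)} \leq \norm{\eta f}_{H^{\al+\beta}(0,1)}$, and because $\eta$ vanishes near $0$ while $f\in{}_{0}H^{\al}(0,\omega)$, a standard multiplier estimate yields $\eta f \in {}_{0}H^{\al}(0,1)$ with norm controlled by $\norm{f}_{{}_{0}H^{\al}(0,\omega)}$. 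By Proposition \ref{eq} one has $\eta f = I^{\al}(\p^{\al}(\eta f))$, so by Corollary \ref{eqc} the claim (\ref{nowelc}) reduces to the estimate
\[
\norm{\p^{\al}(\eta f)}_{{}_{0}H^{\beta}(0,1)} \leq c\bigl(\norm{f}_{{}_{0}H^{\al}(0,\omega)}+\norm{\p^{\al}f}_{H^{\beta}(\delta/2,\omega)}\bigr).
\]

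To verify the last estimate I use a commutator identity, derived analogously to (\ref{noweg}):
\[
\p^{\al}(\eta f)(x) = \eta(x)\p^{\al}f(x) + \frac{\al}{\Gamma(1-\al)}\izx(x-p)^{-\al-1}(\eta(x)-\eta(p))f(p)dp.
\]
The first summand is a smooth cutoff of $\p^{\al}f$ whose support is contained in the region where $\p^{\al}f\in H^{\beta}$, so it lies in $H^{\beta}(0,1)$ with norm controlled by $\norm{\p^{\al}f}_{H^{\beta}(\delta/2,\omega)}$. For the commutator summand, I Taylor-expand the smooth factor $k(x,p):=(\eta(x)-\eta(p))/(x-p)$ around $p=x$ to write $k(x,p)=\eta'(x)+(p-x)\tilde k(x,p)$ with $\tilde k$ smooth on $[0,1]^{2}$. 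This splits the commutator into $\al\eta'(x)I^{1-\al}f(x)$ plus a term whose kernel is $(x-p)^{1-\al}$ times a smooth function of $(x,p)$. Corollary \ref{eqc} gives $I^{1-\al}:{}_{0}H^{\al}(0,1)\to {}_{0}H^{1}(0,1)\hookrightarrow H^{\beta}(0,1)$, so the first piece lies in $H^{\beta}(0,1)$ with bound $c\norm{f}_{{}_{0}H^{\al}(0,\omega)}$; the second piece is essentially $I^{2-\al}$ applied to a smooth modification of $f$, hence even more regular and likewise controlled. Since $\eta\equiv 0$ on $[0,\delta/2]$, both summands vanish near $0$, placing $\p^{\al}(\eta f)$ in ${}_{0}H^{\beta}(0,1)$, and Corollary \ref{eqc} then delivers (\ref{nowelc}).

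The main obstacle is the rigorous $H^{\beta}$-bound on the commutator piece when $\beta > 1/2$: the pointwise domination by $I^{1-\al}|f|$ is immediate but only gives $L^{2}$ control, and upgrading to $H^{\beta}$ requires tracking the smoothing effect of the Volterra operator with smooth kernel modifier, along the lines of the manipulations carried out around (\ref{noweke}) in the proof of Lemma \ref{lemusi2}.
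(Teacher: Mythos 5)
Your argument is essentially correct, but it follows a genuinely different route from the paper. You localize $f$ itself by a cutoff $\eta$ vanishing near $0$, write $\eta f=I^{\al}\bigl(\p^{\al}(\eta f)\bigr)$, and control $\p^{\al}(\eta f)$ in ${}_{0}H^{\beta}(0,1)$ via the commutator identity in the spirit of (\ref{noweg}); Corollary \ref{eqc} then upgrades to $H^{\al+\beta}$. The paper instead keeps the representation $f=I^{\al}\p^{\al}f$ and splits the \emph{integration domain} at $\delta/2$: the far-field piece $\int_{0}^{\delta/2}(x-p)^{\al-1}\p^{\al}f(p)\,dp$ has a smooth, nonsingular kernel for $x\geq\delta$ and is shown directly to lie in $H^{2}(\delta,\omega)$, the constant contribution $(x-\delta/2)^{\al}\p^{\al}f(\delta/2)$ is explicit, and the near-field piece is the shifted fractional integral $I^{\al}_{\delta/2}$ of $\p^{\al}f-\p^{\al}f(\delta/2)\in{}_{0}H^{\beta}(\delta/2,\omega)$, handled by Corollary \ref{eqc} on the subinterval. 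The paper's splitting avoids any commutator analysis and yields the estimate with exactly $\norm{\p^{\al}f}_{H^{\beta}(\delta/2,\omega)}$ on the right, since the integration variable never exceeds $x\leq\omega$; your cutoff necessarily has support reaching slightly past $\omega$, so you obtain (\ref{nowelc}) with $\omega$ replaced by some $\omega'\in(\omega,1)$ on the right-hand side — harmless for the qualitative claim and for every application in the paper (Lemma \ref{lemusi2} already uses $\frac{1+\omega}{2}$), but not literally the stated inequality. As for the obstacle you flag, it does close along the lines you sketch: writing $\frac{\eta(x)-\eta(p)}{x-p}=\eta'(x)+(p-x)\tilde k(x,p)$ with $\tilde k$ smooth, the piece $\eta'I^{1-\al}f$ lies in $H^{1}(0,1)$ by Corollary \ref{eqc} with norm $c\norm{f}_{{}_{0}H^{\al}}$, and the remainder kernel $(x-p)^{1-\al}\tilde k(x,p)$ gives an $H^{1}$ function by differentiating under the integral (the derivative is dominated by $I^{1-\al}\abs{f}+I^{2-\al}\abs{f}\in L^{2}$), so the whole commutator sits in $H^{1}(0,1)\hookrightarrow H^{\beta}(0,1)$ for $\beta\leq 1$. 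In short: your proof works and reuses machinery already present around (\ref{noweg}) and (\ref{noweke}), at the cost of the commutator bookkeeping and a slight enlargement of the interval on the right-hand side; the paper's proof is more elementary and sharper in that respect.
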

\begin{proof}
Let us fix $0<\delta <\omega<1$. Then, by the assumption we have $\p^{\al}f \in H^{\beta}(\frac{\delta}{2},\omega)$. Thus, we may write for $x > \delta/2$
\[
f(x) = I^{\al}(\p^{\al}f-\p^{\al}f(\delta/2))(x) + I^{\al}(\p^{\al}f(\delta/2))(x)
\]
\[
=\frac{1}{\Gamma(\al)}\int_{0}^{\frac{\delta}{2}}(x-p)^{\al-1}(\p^{\al}f(p)-\p^{\al}f(\delta/2))dp+
\frac{1}{\Gamma(\al)}\int_{\frac{\delta}{2}}^{x}(x-p)^{\al-1}(\p^{\al}f(p)-\p^{\al}f(\delta/2))dp
\]
\[
+\frac{1}{\Gamma(\al)}\p^{\al}f(\delta/2)\int_{0}^{\frac{\delta}{2}}(x-p)^{\al-1}dp+
\frac{1}{\Gamma(\al)}\p^{\al}f(\delta/2)\int_{\frac{\delta}{2}}^{x}(x-p)^{\al-1}dp
\]
\[
=\frac{1}{\Gamma(\al)}\int_{0}^{\frac{\delta}{2}}(x-p)^{\al-1}\p^{\al}f(p)dp + I^{\al}_{\frac{\delta}{2}}(\p^{\al}f-\p^{\al}f(\delta/2))(x) + \frac{1}{\Gamma(1+\al)}(x-\delta/2)^{\al}\p^{\al}f(\delta/2).
\]
The third component belongs to $H^{\al+\beta}(\delta,\omega)$, while, by Corollary \ref{eqc}, the second one belongs to ${}_{0}H^{\al+\beta}(\frac{\delta}{2},\omega)$. We may show that the first component belongs to $H^{2}(\delta,\omega)$. Indeed, we have
\[
\frac{d^{2}}{dx^{2}}\int_{0}^{\frac{\delta}{2}}(x-p)^{\al-1}\p^{\al}f(p)dp = (\al-1)(\al-2)\int_{0}^{\frac{\delta}{2}}(x-p)^{\al-3}\p^{\al}f(p)dp
\]
and
\[
\int_{\delta}^{\omega}\abs{\int_{0}^{\frac{\delta}{2}}(x-p)^{\al-3}\p^{\al}f(p)dp}^{2}dx \leq \int_{\delta}^{\omega}(x-\frac{\delta}{2})^{2(\al-3)}\int_{0}^{\frac{\delta}{2}}\abs{\p^{\al}f(p)}^{2}dpdx
\]
\[
 \leq (\delta/2)^{2(\al-3)}\norm{\p^{\al}f}_{L^{2}(0,\omega)}^{2}.
\]
Thus,  $f \in H^{\al+\beta}(\delta,\omega)$ and the estimate (\ref{nowelc}) follows by the Sobolev embedding and Corollary \ref{eqc}.
\end{proof}

\no Finally, we are able to improve the space regularity of solutions to~(\ref{niecv}). We decompose interval $(0,1)$ as follows
\[
(0,1) = \left(\bigcup_{k=1}^{\infty}\left(\frac{1}{k+1},\frac{1}{k}\right]\right) \setminus \{1\}.
\]
Then, for each $\al \in (0,1)$ we may choose $k \in \mathbb{N} \setminus \{0\}$ such that $\al \in (\frac{1}{k+1},\frac{1}{k}]$.
 We will discuss separately the case for each $k \in \mathbb{N} \setminus \{0\}$. The proof for $\al \in (\frac{1}{2},1)$ requires just one step, however for $\al \in (\frac{1}{k+1},\frac{1}{k}]$ we need to repeat the reasoning $k$ times.
 \begin{lem}\label{third}
Let us assume that $v_{0} \in \dd$, $\al \in (0,1)$. We choose $k \in \mathbb{N} \setminus \{0\}$ such that  $\al \in (\frac{1}{k+1},\frac{1}{k}]$. Then, for every $\gamma_{k} \in (\al,(k+1)\al)$  the solution to (\ref{niecv}) obtained in Theorem \ref{first} satisfies
 \eqq{
 v \in L^{\infty}_{loc}(0,T;H^{\gamma_{k}+1}_{loc}(0,1)) \m{ and } \p^{\al}v_{x} \in L^{\infty}_{loc}(0,T;H^{\gamma_{k}-\al}_{loc}(0,1)).
 }{atreg}
 Furthermore,
 \[
 v \in L^{\frac{1+\al}{k\al}}(0,T;H^{\gamma_{k}+1}_{loc}(0,1)) \m{ and } \p^{\al}v_{x} \in L^{\frac{1+\al}{k\al}}(0,T;H^{\gamma_{k}-\al}_{loc}(0,1)).
 \]
 \end{lem}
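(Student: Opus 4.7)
The strategy is to iterate a regularity improvement $k$ times, each iteration gaining approximately $\al$ in the Sobolev exponent of $v_x$. Apply $A_{2}(t)$ to the variation-of-constants identity~(\ref{wzorv}):
\[
A_{2}(t) v(\cdot,t) = A_{2}(t) G(t,0) v_{0} + \int_{0}^{t} A_{2}(t) G(t,\sigma) f(\cdot,\sigma)\, d\sigma,
\]
with $f(\cdot,\sigma)=\frac{\dot{s}(\sigma)}{s(\sigma)}xv_{x}(\cdot,\sigma)$. Each iteration estimates the two summands via Lemma \ref{lemusi} (when $\al>\frac{1}{2}$) or Lemma \ref{lemusi2} (when $\al\leq\frac{1}{2}$), upgrading the local spatial regularity of $\poch\da v = s^{1+\al}(t)A_{2}(t)v$; Lemma \ref{local} then promotes the gained regularity of $\p^{\al}v_{x}$ (provided its Sobolev exponent exceeds $\frac{1}{2}$) to regularity of $v_{x}$ itself.

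For $k=1$, i.e.\ $\al\in(\frac{1}{2},1)$, one iteration suffices. Both $v_{0}\in\dd$ and $xv_{x}(\cdot,\sigma)$ lie in ${}_{0}H^{\al}(0,1)$ (the factor $x$ supplies the vanishing trace at $0$), so Lemma \ref{lemusi} gives $\norm{A_{2}(t)G(t,\sigma)g}_{H^{\gamma}(\ve,\omega)}\le c(t-\sigma)^{-(1+\gamma)/(1+\al)}\norm{g}_{{}_{0}H^{\al}(0,1)}$ for every $\gamma<\al$. Substituting into both summands yields $\poch\da v\in H^{\gamma_{1}-\al}_{loc}$ in space, with the $L^{(1+\al)/\al}$ time integrability following from $(1+\gamma_{1}-\al)/(1+\al)<1$. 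For $\gamma_{1}-\al>\frac{1}{2}$ Lemma \ref{local} produces $v_{x}\in H^{\gamma_{1}}_{loc}$; smaller $\gamma_{1}\in(\al,\al+\frac{1}{2}]$ follow by Sobolev embedding.

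For $k\geq 2$, i.e.\ $\al\leq\frac{1}{2}$, I iterate. The inductive hypothesis after step $j$ is that $v\in L^{(1+\al)/(j\al)}(0,T;H^{\gamma_{j}+1}_{loc})$ for some $\gamma_{j}\in(\al,(j+1)\al)$ (at $j=0$ this is to be read as $v\in L^{\infty}(0,T;\dd)$ with $\gamma_{0}=\al$). At the step $j\to j+1$, pick $\beta$ just below $\gamma_{j}+\al$ and $\bar\gamma\in(0,\frac{1}{2})$; Lemma \ref{lemusi2} applies to both summands, the initial datum $v_{0}\in H^{1+\al}$ trivially meeting the hypothesis and $f(\cdot,\sigma)\in H^{\gamma_{j}}_{loc}$ doing so once $\gamma_{j}>\frac{1}{2}$. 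The convolution integrals are controlled by the Beta identity $\int_{0}^{t}(t-\sigma)^{-a}\sigma^{-b}d\sigma = c\,t^{1-a-b}$; tracking the accumulated exponents through $k$ steps yields the claimed $L^{(1+\al)/(k\al)}$ time integrability, while Lemma \ref{local} (whose hypothesis $\gamma_{j+1}-\al>\frac{1}{2}$ is eventually satisfied) converts the gained regularity of $\poch\da v$ into regularity of $v_{x}$, closing the induction.

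The most delicate step is the opening iteration when $\al\leq\frac{1}{2}$: here $f(\cdot,\sigma)\in H^{\al}$ with $\al\leq\frac{1}{2}$ fails the threshold $\beta>\frac{1}{2}$ required by Lemma \ref{lemusi2}, so that lemma is unavailable for the inhomogeneous term, and when $\al$ is small one needs several preliminary bootstrap steps before the $\frac{1}{2}$-threshold is crossed. I expect to bypass this bottleneck using the abstract evolution operator estimates~(\ref{r2})--(\ref{r4}) together with $f(\cdot,\sigma)\in[L^{2}(0,1),\dd]_{\al/(1+\al)}$ and $v_{0}\in\dd$, extracting the initial regularity gains directly from the interpolation-space estimates; once $\gamma_{j}>\frac{1}{2}$, the uniform scheme based on Lemma \ref{lemusi2} and Lemma \ref{local} takes over and produces the stated result.
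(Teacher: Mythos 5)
Your proposal follows essentially the same route as the paper's proof: apply $A_{2}(t)$ to the variation-of-constants formula (\ref{wzorv}), handle $\al>\frac{1}{2}$ in a single step via Lemma \ref{lemusi} and Lemma \ref{local}, and for $\al\le\frac{1}{2}$ run the opening bootstrap through the interpolation estimate (\ref{r2}) until the regularity of $v_{x}$ crosses $\frac{1}{2}$, after which Lemma \ref{lemusi2} combined with Lemma \ref{local} takes over --- this is precisely the paper's two-phase scheme with $\lceil\frac{k-1}{2}\rceil$ interpolation steps followed by $\lceil\frac{k}{2}\rceil$ localized steps. Two bookkeeping points would need fixing in a full write-up: $v_{0}$ need not belong to ${}_{0}H^{\al}(0,1)$ when $\al>\frac{1}{2}$ (it need not vanish at $x=0$), so the initial-data term should be estimated by (\ref{r2}) with $\delta=1$ rather than by Lemma \ref{lemusi}; and in the induction step the admissible $\beta$ in Lemma \ref{lemusi2} is just below $\gamma_{j}$ (the actual regularity of $f$), not $\gamma_{j}+\al$ --- the extra $\al$ is gained afterwards through Lemma \ref{local}, which still yields $\gamma_{j+1}$ just below $\gamma_{j}+\al$ as required.
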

 \begin{proof}

Let us denote $\delta=\frac{\al}{\al+1}$ in the case $\al \neq \frac{1}{2}$ and for $\al = \frac{1}{2}$ by $\delta$ we mean any number from the interval $(0,\frac{1}{3})$. We fix $0<\ve<\omega<1$.
We apply to (\ref{vcf}) the operator $A_{2}(t)$ and estimate its norm in the interpolation space. Firstly, we consider the case $\al \in  (\frac{1}{2},1)$. In this case, we have $f \in L^{\infty}(0,T;{}_{0}H^{(1+\al)\delta}(0,1))$. Thus, by Lemma \ref{lemusi} we obtain for any $0<\theta < \delta$
\[
\norm{A_{2}(t)\izt G(t,\sigma)f(\cdot,\sigma)d\sigma}_{H^{(1+\al)\theta}(\ve,\omega)} \leq \izt\norm{A_{2}(t)G(t,\sigma)f(\cdot,\sigma)}_{H^{(1+\al)\theta}(\ve,\omega)}d\sigma
\]
\[
\leq \izt \frac{c}{(t-\sigma)^{1+\theta-\delta}}\norm{f(\cdot,\sigma)}_{{}_{0}H^{(1+\al)\delta}(0,1)}d\sigma
\leq \frac{cT^{\delta-\theta}}{\delta-\theta}\norm{f}_{L^{\infty}(0,T;{}_{0}H^{\al}(0,1))}.
\]
By the regularity of the initial condition we obtain
\[
\norm{A_{2}(t)G(t,0)v_{0}}_{H^{(1+\al)\theta}(0,1)} \leq  \frac{c}{t^{\theta}}\norm{v_{0}}_{\dd}.
\]
Thus, in view of formula (\ref{wzorv}), we get that for every $0<\gamma <\al$
\[
A_{2}(t)v \in L^{\infty}_{loc}(0,T;H^{\gamma}_{loc}(0,1)), \hd A_{2}(t)v \in L^{\frac{\al+1}{\al}}(0,T;H^{\gamma}_{loc}(0,1)),
\]
which implies
\[
\p^{\al}v_{x} \in L^{\infty}_{loc}(0,T;H^{\gamma}_{loc}(0,1)), \hd  \p^{\al}v_{x} \in L^{\frac{\al+1}{\al}}(0,T;H^{\gamma}_{loc}(0,1)).
\]
Applying Lemma \ref{local} we obtain that
\[
v_{x} \in L^{\infty}_{loc}(0,T;H^{\gamma+\al}_{loc}(0,1)) \m{ and } v_{x} \in L^{\frac{\al+1}{\al}}(0,T;H^{\gamma+\al}_{loc}(0,1)),
\]
which finishes the proof in the case $\al \in (\frac{1}{2},1)$.\\
In the case $\al \leq \frac{1}{2}$ we have $f \in L^{\infty}(0,T;[L^{2},\dd]_{\delta})$. Thus, by (\ref{r2}), we obtain that for any $\theta < \delta$
\[
\norm{A_{2}(t)\izt G(t,\sigma)f(\cdot,\sigma)d\sigma}_{[L^{2}(0,1),\dd]_{\theta}} \leq \izt\norm{A_{2}(t)G(t,\sigma)f(\cdot,\sigma)}_{[L^{2}(0,1),\dd]_{\theta}}d\sigma
\]
\[
\leq \izt \frac{c}{(t-\sigma)^{1+\theta-\delta}}\norm{f(\cdot,\sigma)}_{[L^{2}(0,1),\dd]_{\delta}}d\sigma
\leq \frac{cT^{\delta-\theta}}{\delta-\theta}\norm{f}_{L^{\infty}(0,T;_{0}H^{\al}(0,1))}.
\]
This together with the estimate
\[
\norm{A_{2}(t)G(t,0)v_{0}}_{[L^{2}(0,1), \da]_{\theta}} \leq  \frac{c}{t^{\theta}}\norm{v_{0}}_{\dd}
\]
and formula (\ref{vcf}), implies that for every $0<\theta < \delta$ we have
\[
A_{2}(t)v \in L^{\infty}_{loc}(0,T;[L^{2}(0,1),\dd]_{\theta}) =L^{\infty}_{loc}(0,T;H^{(1+\al)\theta}(0,1)).
\]
Hence,
\[
\p^{\al}v_{x} \in L^{\infty}_{loc}(0,T;H^{\gamma_{0}}(0,1))  \m{ for every } \gamma_{0} \in (0,\al)
\]
and thus
\eqq{
\norm{v_{x}(\cdot,t)}_{{}_{0}H^{\gamma_{1}}(0,1)} \leq c t^{-\frac{\al}{\al+1}}\norm{v_{0}}_{\dd}
 \m{ for every } \gamma_{1} < 2\al.
}{vx02}
 Let us denote $\delta_{1} = \frac{\gamma_{1}}{1+\al}$. We will discuss firstly the case $\al \in (\frac{1}{4},\frac{1}{2}]$. We note that by~(\ref{vx02})
 \[
 \norm{f(\cdot,t)}_{H^{(1+\al)\delta_{1}}(0,1)} \leq c t^{-\frac{\al}{\al+1}}\norm{v_{0}}_{\dd}.
 \]
Applying Lemma \ref{lemusi2}, we obtain for every $\theta < \delta_{1}$
\[
\norm{A_{2}(t)v(\cdot,t)}_{H^{(1+\al)\theta}(\ve,\omega)} \leq \frac{c}{t^{\theta}} \norm{v_{0}}_{\dd} + \izt\norm{A_{2}(t)G(t,\sigma)f(\cdot,\sigma)}_{H^{(1+\al)\theta}(\ve,\omega)}d\sigma
\]
\[
\leq \frac{c}{t^{\theta}} \norm{v_{0}}_{\dd} + c \izt \sigma^{-\frac{\al}{\al+1}}(t-\sigma)^{-1-\theta+\delta_{1}}d\sigma\norm{v_{0}}_{\dd}
\]
and we arrive at
\[
A_{2}(t)v \in L^{\infty}_{loc}(0,T;H^{(1+\al)\theta}_{loc}(0,1)),
\hd A_{2}(t)v \in L^{\frac{\al+1}{2\al}}(0,T;H^{(1+\al)\theta}_{loc}(0,1))
 \m{ for every } \theta < \delta_{1}.
\]
Thus,
\[
\p^{\al}v_{x} \in L^{\infty}_{loc}(0,T;H^{\gamma_{1}}_{loc}(0,1)), \hd
\p^{\al}v_{x} \in L^{\frac{\al+1}{2\al}}(0,T;H^{\gamma_{1}}_{loc}(0,1))
 \m{ for every }\gamma_{1} < 2\al.
\]
Applying Lemma \ref{local} we get that
\[
v_{x} \in L^{\infty}_{loc}(0,T;H^{\gamma_{2}}_{loc}(0,1)),
\hd v_{x} \in L^{\frac{\al+1}{2\al}}(0,T;H^{\gamma_{2}}_{loc}(0,1)) \m{ for every } \gamma_{2} < 3\al.
\]
This way we proved the lemma for $\al \in (\frac{1}{3},1)$. If $\al \in (\frac{1}{4},\frac{1}{3}]$, since $f \in L^{\frac{\al+1}{2\al}}(0,T;H^{\gamma_{2}}_{loc}(0,1)\cap H^{\bar{\gamma}}(0,1))$ for every $0<\bar{\gamma}< \frac{1}{2}$, we apply Lemma~\ref{lemusi2} with $\beta = \gamma_{2}$ together with Lemma~\ref{local} and we obtain that
\[
\p^{\al}v_{x} \in L^{\infty}_{loc}(0,T;H^{\gamma_{2}}_{loc}(0,1)) \m{ and } v_{x} \in L^{\infty}_{loc}(0,T;H^{\gamma_{3}}_{loc}(0,1))  \m{ for every } \gamma_{3} < 4\al.
\]
In general case, we proceed as follows. For $\al \in (\frac{1}{(k+1)},\frac{1}{k}]$, $k\geq 2$ we apply to (\ref{vcf}) the estimate (\ref{r2}) with $\delta_{n} = \frac{\gamma_{n}}{\al+1}$, $\gamma_{n} < (n+1)\al$ for $n=0,\dots,\lceil\frac{k-1}{2}\rceil-1$. This way we obtain that
\eqq{
v_{x} \in L^{\infty}_{loc}(0,T;{}_{0}H^{\gamma_{\lceil(k-1)/2\rceil}}(0,1))
}{nowez}
and
\[
\norm{f(\cdot,t)}_{H^{\gamma_{\lceil(k-1)/2\rceil}}(0,1)} \leq c t^{-\frac{\gamma_{\lceil(k-1)/2\rceil}}{\al+1}} \norm{v_{0}}_{\dd}.
\]
Then we apply to (\ref{vcf}) Lemma \ref{lemusi2} together with Lemma \ref{local} $\lceil\frac{k}{2}\rceil$ times with $\beta = \gamma_{n}$ for $n=\lceil\frac{k-1}{2}\rceil, \dots, k-1$ to obtain
\[
\p^{\al}v_{x} \in  L^{\infty}_{loc}(0,T;H^{\gamma_{k-1}}_{loc}(0,1)), \hd v_{x} \in L^{\infty}_{loc}(0,T;H^{\gamma_{k}}_{loc}(0,1)), \hd v_{x} \in L^{\frac{\al+1}{k\al}}(0,T;H^{\gamma_{k}}_{loc}(0,1)),
\]
which finishes the proof.
\end{proof}

\no In Theorem \ref{first} we have obtained the solution to (\ref{niecv}) belonging to $C([0,T];\dd)$. By Lemma \ref{third} we may deduce local continuity of the solution with values in more regular spaces.
We establish this result in the following corollary.

\begin{coro}\label{regost}
  Let us assume that $v_{0} \in \dd$. Let $v$ be a solution to (\ref{niecv}) given by Theorem \ref{first}. Let $\al \in (0,1)$, we choose $k \in \mathbb{N} \setminus \{0\}$ such that  $\al \in (\frac{1}{k+1},\frac{1}{k}]$. Then, for every $\al<\gamma_{k}<(k+1)\al$ there holds
  \eqq{
  v \in C((0,T];H^{\gamma_{k}+1}_{loc}(0,1)) \m{ and } \p^{\al}v_{x} \in C((0,T];H^{\gamma_{k}-\al}_{loc}(0,1)).
  }{ostc}
  Furthermore,
  \eqq{
  v_{x} \in C((0,T];C[0,1]) \m{ for }\al \in (0,\frac{1}{2}] \m{ and } v_{x} \in C([0,T];C[0,1]) \m{ for }\al \in (\frac{1}{2},1).
  }{vxcc}
\end{coro}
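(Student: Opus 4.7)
My plan is to bootstrap the uniform-in-time bounds of Lemma \ref{third} into strong continuity via the Duhamel formula (\ref{vcf}), starting from a positive time at which the solution already lives in the higher local regularity space; then to argue (\ref{vxcc}) by a Sobolev embedding in the easy regime and by a boundary-extension argument in the delicate one.

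For (\ref{ostc}): fix $t_0 \in (0,T]$ and, by Lemma \ref{third}, pick $\tau \in (0,t_0)$ such that $v(\cdot,\tau) \in H^{\gamma_k+1}_{loc}(0,1)$; such $\tau$ exists in a set of full measure. The semigroup property of $G$ applied to (\ref{vcf}) gives, for $t \ge \tau$,
\[
v(\cdot,t) = G(t,\tau)v(\cdot,\tau) + \int_{\tau}^{t} G(t,\sigma) f(\cdot,\sigma)\,d\sigma.
\]
I apply Lemma \ref{lemusi} (if $\al > \tfrac{1}{2}$) or Lemma \ref{lemusi2} (if $\al \le \tfrac{1}{2}$) with initial datum $v(\cdot,\tau)$ to get uniform-in-$t$ bounds for $A_2(t)G(t,\tau)v(\cdot,\tau)$ on compact subsets of $(\tau, T]$ in the appropriate local $H^{\gamma}$ norm. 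Continuity of the first term in $H^{\gamma_k+1}_{loc}(0,1)$ then follows from the known $\dd$-continuity of Theorem \ref{first} combined with density of regular approximants (as in the proofs of Lemmas \ref{lemusi} and \ref{lemusi2}) and Lemma \ref{local}. For the integral term, the $L^{(1+\al)/(k\al)}$-integrability of $f$ given by Lemma \ref{third}, together with the local bounds just described and dominated convergence, yields continuity in the same local space, in a manner analogous to the proof of (\ref{r5}) in Theorem \ref{first}.

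For (\ref{vxcc}) in the case $\al \in (\tfrac{1}{2},1)$: the embedding ${}_{0}H^{\al}(0,1) \hookrightarrow C[0,1]$ (which is valid because elements of ${}_{0}H^{\al}$ vanish at $0$ when $\al > \tfrac{1}{2}$) combined with $v \in C([0,T];\dd)$ from Theorem \ref{first} immediately gives $v_x \in C([0,T];C[0,1])$. For $\al \in (0,\tfrac{1}{2}]$, from (\ref{ostc}) we obtain $v_x(\cdot,t) \in H^{\gamma_k}_{loc}(0,1) \hookrightarrow C_{loc}(0,1)$ for $t > 0$ since $\gamma_k > \tfrac{1}{2}$. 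Continuity at $x=0$ follows from the representation $v_x = I^{\al}(\p^{\al}v_x)$ from Proposition \ref{eq} combined with the local $L^\infty$-boundedness of $\p^{\al}v_x$ near $x=0$, which forces $v_x(x,t)\to 0$ as $x \to 0^+$ (matching the prescribed boundary condition). Continuity at $x=1$ follows from the local $H^{\gamma_k+1}$-regularity applied on $(\varepsilon,1-\delta)$ together with the boundary condition $v(1,t)=0$ and the integration identity $v_x(x,t) = -\int_x^1 v_{xx}(y,t)\,dy$, using the equation to control $v_{xx}$ up to the endpoint. Continuity in $t$ on $(0,T]$ then propagates from the $C((0,T];H^{\gamma_k+1}_{loc}(0,1))$ regularity supplied by (\ref{ostc}).

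The main obstacle is the case $\al \in (0,\tfrac{1}{2}]$: the local-in-space estimates from Lemmas \ref{lemusi2} and \ref{third} have constants that degenerate as $\varepsilon \to 0$ or $\omega \to 1$, so boundary continuity of $v_x$ cannot be read off directly from $H^{\gamma_k}_{loc}$ and must instead be extracted by combining the fractional-integral characterization of ${}_{0}H^{\al}$ (Proposition \ref{eq}), the equation (\ref{niecv}), and the explicit boundary conditions.
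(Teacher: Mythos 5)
Your plan for (\ref{vxcc}) in the case $\al\in(0,\frac{1}{2}]$ has a genuine gap at the endpoints $x=0$ and $x=1$, and you have in fact identified it yourself without closing it. All the higher regularity you propose to use comes from Lemma \ref{lemusi2} and Lemma \ref{third}, whose constants blow up as $\ve\to 0$ and $\omega\to 1$; consequently neither the ``local $L^{\infty}$-boundedness of $\p^{\al}v_{x}$ near $x=0$'' nor any control of $v_{xx}$ ``up to the endpoint'' $x=1$ is available from what you have established, and the sentence promising to extract boundary continuity ``by combining the fractional-integral characterization, the equation, and the boundary conditions'' is a statement of intent, not an argument. The paper closes this exact gap by a different mechanism: the intermediate, \emph{global-in-space} estimate (\ref{nowez}) from the proof of Lemma \ref{third}, namely $v_{x}\in L^{\infty}_{loc}(0,T;{}_{0}H^{\gamma}(0,1))$ for some $\gamma>\frac{1}{2}$, which is produced by the pure interpolation-space bootstrap (estimate (\ref{r2})) \emph{before} any cut-off localization is performed. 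Interpolating this against $v_{x}\in C([0,T];L^{2}(0,1))$ and using the Sobolev embedding $H^{\gamma}(0,1)\hookrightarrow C[0,1]$ gives continuity of $v_{x}$ up to both endpoints at once. Without invoking (\ref{nowez}) (or reproving it), your argument does not yield $v_x\in C((0,T];C[0,1])$.

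For (\ref{ostc}) your route (restarting Duhamel at a positive time $\tau$ and reapplying Lemmas \ref{lemusi}--\ref{lemusi2}) is a detour with a soft spot: those lemmas deliver only \emph{bounds} in the local $H^{\gamma}$ norms, obtained through weak limits of approximants and lower semicontinuity of the norm, so they do not by themselves give continuity in $t$ of either term of the Duhamel formula in $H^{\gamma_{k}+1}_{loc}$. The honest way to convert boundedness into continuity is the interpolation inequality
\[
\norm{v(\cdot,t)-v(\cdot,\tau)}_{H^{\overline{\gamma_{k}}+1}(\ve,\omega)}\leq c\,\norm{v(\cdot,t)-v(\cdot,\tau)}_{\dd}^{1-\theta}\,\norm{v(\cdot,t)-v(\cdot,\tau)}_{H^{\gamma_{k}+1}(\ve,\omega)}^{\theta},
\qquad \theta=\tfrac{\overline{\gamma_{k}}-\al}{\gamma_{k}-\al},
\]
with $\al<\overline{\gamma_{k}}<\gamma_{k}<(k+1)\al$: the first factor tends to zero by Theorem \ref{first} and the second is bounded on compacts of $(0,T]$ by Lemma \ref{third} (and similarly for $\p^{\al}v_{x}$, interpolating against $C([0,T];L^{2}(0,1))$). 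Since the claimed range of $\gamma_{k}$ is open, this already proves (\ref{ostc}) in one line and is exactly the paper's argument; I recommend replacing the Duhamel restart by it.
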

\begin{proof}
Theorem \ref{first} states that $v \in C([0,T];\dd)$. Since for arbitrary $0<\ve<\omega<1$ and for every $\al<\overline{\gamma_{k}} < \gamma_{k}<(k+1)\al$ there holds
\[
H^{\overline{\gamma_{k}}+1}(\ve,\omega) = [H^{1+\al}(\ve,\omega),H^{\gamma_{k}+1}(\ve,\omega)]_{\frac{\overline{\gamma_{k}}-\al}{\gamma_{k}-\al}},
\]
we may estimate by the interpolation theorem (\cite[Corollary 1.2.7]{Lunardi})
\[
\norm{v(\cdot,t)-v(\cdot,\tau)}_{H^{\overline{\gamma_{k}}+1}(\ve,\omega)}
 \leq c\norm{v(\cdot,t)-v(\cdot,\tau)}_{\dd}^{1-\frac{\overline{\gamma_{k}}-\al}{\gamma_{k}-\al}}
\norm{v(\cdot,t)-v(\cdot,\tau)}_{H^{\gamma_{k}+1}(\ve,\omega)}^{\frac{\overline{\gamma_{k}}-\al}{\gamma_{k}-\al}},
\]
where $c =  c(\gamma_{k},\overline{\gamma_{k}},\ve,\omega)$.
By Lemma \ref{third}, the second norm on the right hand side above is bounded on every compact interval contained in $(0,T]$, while the first tends to zero as $\tau\rightarrow t$ for $t,\tau \in [0,T]$.

\no In order to obtain the claim for $\p^{\al}v_{x}$ we recall that by Theorem \ref{first} we have $\p^{\al}v_{x} \in C([0,T];L^{2}(0,1))$
Applying again the interpolation theorem we obtain for every $0<\ve<\omega<1$, $0<\tau < t \leq T$ and every $\al<\overline{\gamma_{k}} < \gamma_{k} < (k+1)\al$
\[
\norm{\p^{\al}v_{x}(\cdot,t) - \p^{\al}v_{x}(\cdot,\tau)}_{H^{\overline{\gamma_{k}}-\al}(\ve,\omega)}
\]
\[
\leq c(\gamma_{k},\overline{\gamma_{k}},\al,\ve,\omega)\norm{\p^{\al}v_{x}(\cdot,t) - \p^{\al}v_{x}(\cdot,\tau)}_{L^{2}(0,1)}^{1-\frac{\overline{\gamma_{k}}-\al}{\gamma_{k}-\al}}\norm{\p^{\al}v_{x}(\cdot,t) - \p^{\al}v_{x}(\cdot,\tau)}_{H^{\gamma_{k}-\al}(\ve,\omega)}^{\frac{\overline{\gamma_{k}}-\al}{\gamma_{k}-\al}}.
\]
The first norm tends to zero as $\tau \rightarrow t$, while the second one is bounded on every compact interval contained in $(0,T]$ due to Lemma \ref{third}. This way we proved~(\ref{ostc}). The continuity of $v_{x}$ in the case $\al \in (\frac{1}{2},1)$ follows by the Sobolev embedding from $v \in C([0,T],\dd)$. In the case $\al \in (0,\frac{1}{2}]$ we recall that $v_{x} \in C([0,T];L^{2}(0,1))$ and by~(\ref{nowez}) $v_{x} \in L^{\infty}_{loc}(0,T;H^{\gamma}(0,1))$, for a $\gamma > \frac{1}{2}$. Hence, applying again the interpolation argument together with Sobolev embedding, we arrive at (\ref{vxcc}).
\end{proof}
\begin{coro}\label{w2b}
Let us assume that $v_{0} \in \dd$. Let $v$ be a solution to (\ref{niecv}) given by Theorem \ref{first}. Then, for every $\al \in (0,1)$ there exists $\beta \in (\al,1)$ such that for every $0<\ve<\omega<1$ there holds $v~\in C((0,T];W^{2,\frac{1}{1-\beta}}(\ve,\omega))$.
\end{coro}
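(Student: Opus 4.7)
The plan is to derive this from Corollary~\ref{regost} combined with the one-dimensional Sobolev embedding. Choose $k\in\mathbb{N}\setminus\{0\}$ such that $\alpha\in(\frac{1}{k+1},\frac{1}{k}]$; Corollary~\ref{regost} then provides $v\in C((0,T];H^{\gamma_{k}+1}_{loc}(0,1))$ for every $\gamma_{k}\in(\alpha,(k+1)\alpha)$. The idea is to pick $\gamma_{k}$ with $\gamma_{k}-1\in(0,\frac{1}{2})$ and $\gamma_{k}>\alpha+\frac{1}{2}$, then apply the embedding $H^{s}(\varepsilon,\omega)\hookrightarrow L^{2/(1-2s)}(\varepsilon,\omega)$, valid for $s\in(0,\frac{1}{2})$, to the second spatial derivative.

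The key arithmetic step is verifying that $J:=\bigl(\max\{1,\alpha+\tfrac{1}{2}\},\min\{\tfrac{3}{2},(k+1)\alpha\}\bigr)$ is nonempty. This reduces to three inequalities: $(k+1)\alpha>1$ (immediate from $\alpha>\frac{1}{k+1}$); $(k+1)\alpha>\alpha+\frac{1}{2}$, equivalent to $\alpha>\frac{1}{2k}$, which follows from $\alpha>\frac{1}{k+1}\geq\frac{1}{2k}$ (strict for $k=1$ since then $\alpha>\frac{1}{2}$ by assumption); and $\alpha+\frac{1}{2}<\frac{3}{2}$ (immediate from $\alpha<1$).

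Fix any $\gamma_{k}\in J$ and set $\beta:=\gamma_{k}-\frac{1}{2}$, so that $\beta\in(\alpha,1)$. Given $0<\varepsilon<\omega<1$, the inclusion $v(\cdot,t)\in H^{\gamma_{k}+1}(\varepsilon,\omega)$ yields $v_{xx}(\cdot,t)\in H^{\gamma_{k}-1}(\varepsilon,\omega)$; since $\gamma_{k}-1\in(0,\frac{1}{2})$, the Sobolev embedding delivers $v_{xx}(\cdot,t)\in L^{1/(1-\beta)}(\varepsilon,\omega)$. The lower-order terms are even better behaved: $H^{\gamma_{k}}(\varepsilon,\omega)\hookrightarrow C[\varepsilon,\omega]$ because $\gamma_{k}>\frac{1}{2}$, so both $v(\cdot,t)$ and $v_{x}(\cdot,t)$ lie in $L^{\infty}(\varepsilon,\omega)\hookrightarrow L^{1/(1-\beta)}(\varepsilon,\omega)$. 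Hence $H^{\gamma_{k}+1}(\varepsilon,\omega)\hookrightarrow W^{2,1/(1-\beta)}(\varepsilon,\omega)$ continuously, and this embedding transfers the time continuity supplied by Corollary~\ref{regost} directly, giving $v\in C((0,T];W^{2,1/(1-\beta)}(\varepsilon,\omega))$.

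There is no essential analytic obstacle beyond what Corollary~\ref{regost} already provides; the only care needed is in choosing $\gamma_{k}$ so that the resulting $\beta$ stays below $1$ while also exceeding $\alpha$, which is exactly the content of the non-emptiness of $J$.
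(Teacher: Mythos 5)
Your proof is correct and follows essentially the same route as the paper: both deduce the claim from Corollary \ref{regost} via Sobolev embedding. The only difference is organizational --- the paper splits into the cases $\al\in(0,\frac{1}{2})$ (where it simply takes $\gamma_{k}>1$ to get $v\in C((0,T];H^{2}(\ve,\omega))=C((0,T];W^{2,\frac{1}{1-\beta}}(\ve,\omega))$ with $\beta=\frac{1}{2}$) and $\al\in[\frac{1}{2},1)$ (where it invokes the fractional embedding), whereas you give a single unified argument with the explicit interval $J$ and the choice $\beta=\gamma_{k}-\frac{1}{2}$, which is a clean and correct consolidation of the same idea.
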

\begin{proof}
In the case $\al \in (0,\frac{1}{2})$ it is enough to notice that in view of Corollary \ref{regost} we have $v \in C((0,T];H^{2}(\ve,\omega))$ for every $0<\ve<\omega<1$. In the case $\al \in [\frac{1}{2},1)$ the claim follows from Corollary~\ref{regost} by the Sobolev embedding.
\end{proof}
\subsection{The existence and regularity of solutions to (\ref{niech})}
\no At last, we are ready to formulate and prove the result concerning the unique existence and regularity of solution to (\ref{niech}).
\begin{theorem}\label{existance}
Let $b,T > 0$ and $\al \in (0,1)$. Let us assume that $s$ satisfies (\ref{zals}). We further assume, that  $\uz \in H^{1+\al}(0,b)$, $\uz' \in {}_{0}H^{\al}(0,b)$ and $\uz(b)=0$. Then, there exists a unique solution $u$ to~(\ref{niech}) such that $u \in C(\overline{Q_{s,T}})$, $u_{t},\poch \da u \in C(Q_{s,T})$. Moreover, in the case $\al \in (\frac{1}{2},1)$ $u_{x} \in C(\overline{Q_{s,T}})$, while in the case $\al \in (0,\frac{1}{2}]$ $u_{x} \in C(\overline{Q_{s,T}} \setminus (\{t=0\} \times [0,b]))$. Finally, there exists $\beta \in (\al,1)$ such that for every $t \in (0,T]$ and every $0<\ve<\omega<s(t)$ we have $u(\cdot,t) \in W^{2,\frac{1}{1-\beta}}(\ve,\omega)$.
\end{theorem}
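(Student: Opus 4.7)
The plan is to reduce (\ref{niech}) to the cylindrical problem (\ref{niecv}) via the substitution $p = x/s(t)$, $v(p,t) = u(s(t)p, t)$ used in Section~3.1, and then to read off the conclusion from the results already proved in this chapter.

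Setting $v_{0}(p) = \uz(pb)$, the assumptions $\uz \in H^{1+\al}(0,b)$, $\uz' \in {}_{0}H^{\al}(0,b)$, and $\uz(b)=0$ scale to $v_{0}\in\dd$. Theorem~\ref{first} will then produce a unique mild solution $v \in C([0,T];\dd)$ of (\ref{wzorv}); Lemma~\ref{second} will promote $v$ to a strong solution of (\ref{niecv}); Lemma~\ref{third} together with Corollaries~\ref{regost} and~\ref{w2b} will supply the interior spatial regularity, continuity of $\p^{\al}v_{x}$ on $(0,1)\times(0,T]$, the sharp continuity of $v_{x}$ up to the boundary recorded in (\ref{vxcc}) (with the case split $\al>\frac{1}{2}$ versus $\al\le\frac{1}{2}$), and the local $W^{2,1/(1-\beta)}$ regularity for some $\beta\in(\al,1)$.

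The next step will be to recover $u$ from $v$ by $u(x,t) = v(x/s(t),t)$. Since $s\in C[0,T]$ is bounded below by $b$, the map $(x,t)\mapsto(x/s(t),t)$ is a homeomorphism $\overline{Q_{s,T}}\to[0,1]\times[0,T]$ which is smooth in $x$ for each fixed $t$, so every regularity statement for $v$ will pull back to the corresponding statement for $u$. Continuity of $u$ on $\overline{Q_{s,T}}$ will then be immediate, the identity (\ref{zamda}) will give continuity of $\p_{x}\da u$ on $Q_{s,T}$, the chain rule applied to $u_{t}=v_{t}-p\tfrac{\dot{s}(t)}{s(t)}v_{p}$ together with the interior continuity of $v_{t}$ and $v_{p}$ will give continuity of $u_{t}$ there, and the pointwise behaviour of $v_{x}$ recorded in (\ref{vxcc}) will give exactly the asserted continuity of $u_{x}$ (up to $t=0$ when $\al>\frac{1}{2}$, only for $t>0$ when $\al\le\frac{1}{2}$). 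Corollary~\ref{w2b} applied to $v(\cdot,t)$ on subintervals of $(0,1)$ will transfer, via the dilation $p\mapsto p\,s(t)$, to the required $W^{2,1/(1-\beta)}(\ve,\omega)$ regularity of $u(\cdot,t)$ on $0<\ve<\omega<s(t)$.

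For uniqueness, if $u^{1},u^{2}$ are two solutions of (\ref{niech}) with the asserted regularity, the corresponding $v^{1},v^{2}$ will be strong, hence via Duhamel also mild, solutions of (\ref{wzorv}) with the same initial datum $v_{0}$, and the uniqueness part of Theorem~\ref{first} will force $v^{1}=v^{2}$, whence $u^{1}=u^{2}$. The main analytic difficulty of this theorem has already been dispensed with by the preceding lemmas; what remains is essentially bookkeeping under the change of variables, the only delicate point being the case split for $u_{x}$, which reflects the fact that for $\al\le\frac{1}{2}$ no continuity of $v_{x}$ at $t=0$ was established.
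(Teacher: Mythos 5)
Your proposal follows the paper's own route essentially verbatim: transform to the cylinder, invoke Theorem \ref{first}, Lemma \ref{second}, Lemma \ref{third}, Corollaries \ref{regost} and \ref{w2b}, and pull everything back through $u(x,t)=v(x/s(t),t)$, with uniqueness inherited from the uniqueness of the mild solution. Almost all of the bookkeeping is right, including the case split for $u_{x}$ via (\ref{vxcc}).

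The one step that would fail as written is your derivation of the continuity of $u_{t}$ from ``the interior continuity of $v_{t}$ and $v_{p}$.'' Under assumption (\ref{zals}) the function $\dot{s}$ is only bounded and measurable, and Lemma \ref{second} gives only $v_{t}\in L^{\infty}(0,T;L^{2}(0,1))$; since $v_{t}=x\frac{\dot{s}(t)}{s(t)}v_{x}+\frac{1}{s^{1+\al}(t)}\poch\da v$ contains the factor $\dot{s}(t)$, $v_{t}$ is in general \emph{not} continuous in $t$, so you cannot feed it into the chain rule. The identity $u_{t}=v_{t}-p\frac{\dot{s}}{s}v_{p}$ still saves you, but only because the two $\dot{s}$-terms cancel, leaving $u_{t}=\frac{1}{s^{1+\al}(t)}\frac{\p}{\p p}\da v(p,t)$; this is exactly how the paper argues, deducing continuity of $u_{t}$ and of $\poch\da u$ simultaneously from the continuity of $\p^{\al}v_{p}$ on $(0,T]\times(0,1)$ supplied by Corollary \ref{regost} and the Sobolev embedding. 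Replace your appeal to continuity of $v_{t}$ by this use of the equation and the proof is complete.
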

\begin{proof}
Firstly we will establish the results concerning the existence and regularity of solution to (\ref{niecv}) and then, we will rewrite the results in terms of properties of solution to (\ref{niech}).
We note that, under assumptions concerning regularity and traces of $\uz$ we obtain that $v_{0}$  defined in (\ref{defv0}) belongs to $\dd$.
Due to Theorem~\ref{first} and further regularity results given by Lemma \ref{second}, Lemma \ref{third} and Corollary~\ref{regost}, we obtain the existence of $v$ a unique solution to (\ref{niecv}). The solution satisfies $v \in C([0,T];\dd)$, which by the Sobolev embedding implies that $v \in C([0,T]\times [0,1])$. Furthermore, from Corollary~\ref{w2b} we know that there exists $\beta \in (\al,1)$ such that $v \in C((0,T];W^{2,\frac{1}{1-\beta}}(\ve,\omega))$ for every $0<\ve<\omega<1$.

\no We define the function $u$ on $Q_{s,T}$ by the formula $u(x,t) = v(\frac{x}{s(t)},t)$. Then, from
 (\ref{defv}) we infer that $u$ is a unique solution to (\ref{niech}). Since $v \in C([0,T]\times [0,1])$, we obtain that $u \in C(\overline{Q_{s,T}})$ and $v \in C((0,T];W^{2,\frac{1}{1-\beta}}(\ve,\omega))$  implies $u(\cdot,t) \in W^{2,\frac{1}{1-\beta}}(\ve,\omega)$ for every $t \in (0,T]$ and every $0<\ve<\omega<s(t)$.
 We note that $v_{p}(p,t) = s(t)u_{x}(x,t)$. Hence, from (\ref{vxcc}) we obtain that $u_{x} \in C(\overline{Q_{s,T}})$ in the case $\al \in (\frac{1}{2},1)$ and for $\al \in (0,\frac{1}{2}]$ we get $u_{x} \in C(\overline{Q_{s,T}} \setminus (\{t=0\} \times [0,b]))$.
\no On the other hand
\[
u_{t}(x,t) = \poch \da u(x,t) = \frac{1}{s^{1+\al}(t)} \frac{\p}{\p p} \da v(p,t) \m{ where } p=\frac{x}{s(t)}.
\]
From Corollary \ref{regost} and the Sobolev embedding we may deduce that $\p^{\al}v_{p} =\frac{\p}{\p p} \da v  \in C((0,T] \times (0,1))$, which implies $\poch \da u,\hd u_{t} \in C(Q_{s,T})$.
\end{proof}

\section{A solution to Stefan problem}
Before we prove the existence and uniqueness of the solution to Stefan problem, we need to derive the weak extremum principle for the system (\ref{niech}).
\subsection{Extremum principles}
 We will begin with the auxiliary lemmas. Firstly, we will present the extended version of \cite[Lemma 1]{decay} (see also \cite[Theorem 1]{Luchko}).
\begin{lem}\label{maxda}
Let us assume that $f:[0, L]\rightarrow \mathbb{R}$ belongs to $W^{1,\frac{1}{1-\beta}}(0,L)$ for some $\beta \in (0, 1]$. Then,
\begin{enumerate}
\item if $f$ attains its maximum over the interval $[0, L]$ at the point $x_{0} \in (0, L]$, then for every $\al \in (0,\beta)$ there holds the inequality $(\da f)(x_{0}) \geq 0$. Furthermore, if $f$ is not constant on $[0,x_{0}]$, then $(\da f)(x_{0}) > 0$.
\item If $f$ attains its minimum over the interval $[0, L]$ at the point $x_{0} \in (0, L]$, then for every $\al \in (0,\beta)$ there holds the inequality $(\da f)(x_{0}) \leq 0$. Furthermore, if $f$ is not constant on $[0,x_{0}]$, then $(\da f)(x_{0}) < 0$.
\end{enumerate}
\end{lem}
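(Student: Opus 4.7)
My plan is to derive the Caputo integration-by-parts identity
\begin{equation*}
\da f(x_0) = \jgja \left[\frac{f(x_0)-f(0)}{x_0^{\al}} + \al\int_0^{x_0}\frac{f(x_0)-f(p)}{(x_0-p)^{\al+1}}dp\right],
\end{equation*}
from which both parts of the claim will fall out simply by inspecting the signs on the right-hand side. The strategy is modelled on Luchko's proof (cited in the statement); the only additional subtlety is that the hypothesis is given in Sobolev rather than H\"older form, and the improvement to strict inequality has to be read off from the formula.

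The first step is to legitimize the representation $\da f = I^{1-\al}f'$ pointwise. The one-dimensional Sobolev embedding $W^{1,\frac{1}{1-\beta}}(0,L) \hookrightarrow C^{0,\beta}[0,L]$ (Morrey's inequality, valid for the exponent $p = \frac{1}{1-\beta} > 1$, with the Lipschitz case $\beta = 1$ being trivial) shows that $f$ is $\beta$-H\"older continuous and in particular absolutely continuous. Hence $f$ equals the primitive of $f'$, and $\da f(x) = \jgja \int_0^x (x-p)^{-\al}f'(p)dp$ holds classically for every $x \in (0,L]$.

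The main step is an integration by parts on $[0, x_0-\ve]$, with the primitive $p \mapsto f(p) - f(x_0)$ chosen so as to annihilate the boundary contribution at the upper limit in the limit $\ve \to 0^+$:
\begin{equation*}
\int_0^{x_0-\ve}(x_0-p)^{-\al}f'(p)dp = \left.(x_0-p)^{-\al}(f(p)-f(x_0))\right|_0^{x_0-\ve} + \al\int_0^{x_0-\ve}\frac{f(x_0)-f(p)}{(x_0-p)^{\al+1}}dp.
\end{equation*}
I expect this passage to the limit to be the only delicate point. Both the vanishing of the upper boundary term and the absolute convergence of the right-hand integral on $[0,x_0]$ will rely on the H\"older estimate $|f(x_0)-f(p)| \leq C(x_0-p)^{\beta}$ combined with the hypothesis $\al < \beta$: the boundary term is $O(\ve^{\beta-\al}) \to 0$, and the integrand on the right is dominated by $C(x_0-p)^{\beta-\al-1}$, which is integrable near $x_0$ precisely because $\beta - \al > 0$. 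This is exactly the point where the Sobolev assumption is used.

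Once the displayed identity is in hand, both conclusions follow directly. If $f$ attains its maximum at $x_0$, then $f(x_0)-f(0) \geq 0$ and $f(x_0)-f(p) \geq 0$ for all $p \in [0,x_0]$, so $\da f(x_0) \geq 0$. If in addition $f$ is not constant on $[0,x_0]$, continuity yields some $p_0 \in [0,x_0)$ with $f(p_0) < f(x_0)$; either $p_0 = 0$, and then the first term is strictly positive, or by continuity there is an open subinterval of $[0,x_0)$ on which $f < f(x_0)$, making the integral strictly positive. Either way $\da f(x_0) > 0$. The minimum statement then follows at once by applying the maximum case to $-f$ and using the linearity of $\da$.
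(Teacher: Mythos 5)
Your proposal is correct and follows essentially the same route as the paper: both derive the identity $\Gamma(1-\al)(\da f)(x_0) = x_0^{-\al}(f(x_0)-f(0)) + \al\int_0^{x_0}(x_0-p)^{-\al-1}(f(x_0)-f(p))\,dp$ by integrating by parts on $[0,x_0-\ve]$, kill the boundary term and justify the limiting integral via the H\"older bound $|f(x_0)-f(p)|\leq \norm{f'}_{L^{1/(1-\beta)}}(x_0-p)^{\beta}$ coming from the Sobolev hypothesis with $\al<\beta$, and then read off the signs (the paper phrases this through $g=f(x_0)-f$ and monotone convergence, but that is only a cosmetic difference).
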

\begin{proof}
Let us assume that $f$ attains its maximum at the point $x_{0} \in (0, L]$. We define the function $g(x):=f(x_{0}) - f(x)$ for $x \in [0,L]$.  We note that $g(x) \geq 0, g(x_{0})=0$ and $(D^{\al}g)(x) = -(D^{\al}f)(x)$ for $x \in [0,L]$. For $x \in [0,x_{0}]$ we may estimate $g$ as follows
\eqq{
g(x) \leq \int^{x_{0}}_{x}\abs{g'(p)}dp \leq \norm{g'}_{L^{\frac{1}{1-\beta}}(0,L)}\abs{x-x_{0}}^{\beta}.
}{nbeta}
Thus, for fixed $\al \in (0,\beta)$, applying integration by parts formula, we get
\[
(D^{\al}g)(x_{0}) = \lim_{h\rightarrow 0^{+}}\frac{1}{\Gamma(1-\al)} \int_{0}^{x_{0}-h}(x_{0}-p)^{-\al}g'(p)dp
\]
\[
 = \lim_{h\rightarrow 0^{+}}\frac{h^{-\al}g(x_{0}-h)}{\Gamma(1-\al)} - \frac{x_{0}^{-\al}g(0)}{\Gamma(1-\al)} - \frac{\al}{\Gamma(1-\al)} \lim_{h\rightarrow 0^{+}}\int_{0}^{x_{0}-h}(x_{0}-p)^{-\al-1}g(p)dp.
\]
From the estimate (\ref{nbeta}) we infer that the first limit equals zero. Applying the Lebesgue monotone convergence theorem we obtain that
\eqq{
(D^{\al}g)(x_{0}) = - \frac{x_{0}^{-\al}g(0)}{\Gamma(1-\al)} - \frac{\al}{\Gamma(1-\al)} \int_{0}^{x_{0}}(x_{0}-p)^{-\al-1}g(p)dp.
}{lmm}
Thus $(\da g)(x_{0})\leq 0$, which is equivalent with $(\da f)(x_{0})\geq 0$. Furthermore, from the formula (\ref{lmm}) we obtain that if $f$ is not a constant function on $[0,x_{0}]$ then $(\da f)(x_{0})~>0$. Substituting  $f$ by $-f$ we obtain the second part of the claim.
\end{proof}

In the next lemma we will show that $\poch \da f$ is non positive in the maximum point of $f$ in the interior of the interval. This result, under stronger regularity assumptions, was proved in \cite[Lemma 2.2]{visco}. Here we present the proof, where we do not demand $C^{2}$ regularity of $f$.
\begin{lem}\label{nonpositivity}
Let $f:[0,L]\rightarrow \mathbb{R}$ be such that $f' \in AC[0,L]$ and for every $\kappa > 0$ $f'' \in L^{\frac{1}{1-\beta}}(\kappa,L)$ for fixed $\beta \in (0,1)$. If $f$ attains its local maximum in $x_{0}\in (0,L)$ which is a global maximum on $[0,x_{0}]$, then  $(\poch \da f)(x_{0}) \leq 0$ for every $\al \in (0,\beta)$.
\end{lem}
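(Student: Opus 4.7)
My plan is to reduce everything to an explicit non-positive integral by integrating by parts twice, exploiting the fact that $f'(x_{0})=0$ (interior extremum) and that $x_{0}$ is a global maximum on $[0,x_{0}]$.

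First, since $f'\in AC[0,L]$, $f'$ is continuous, and because $x_{0}\in(0,L)$ is a local maximum of $f$, I have $f'(x_{0})=0$. Next I use the identity $\poch\da f=\p^{\al}f'$, which holds whenever $f\in AC$ (it follows by an integration by parts inside the Caputo definition, exactly as is remarked earlier in the paper). So the task is to show $(\p^{\al}f')(x_{0})\leq 0$. It is convenient to introduce $g(x):=f(x_{0})-f(x)$ on $[0,x_{0}]$. By hypothesis $g\geq 0$, $g(x_{0})=0$, $g'(x_{0})=0$, $g'=-f'$ and $g''=-f''$ a.e., and $(\p^{\al}f')(x_{0})=-(\p^{\al}g')(x_{0})$.

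I then compute $(\p^{\al}g')(x_{0})$ by two integrations by parts on the Riemann--Liouville kernel. Starting from $\Gamma(1-\al)(\p^{\al}g')(x_{0})=\poch\int_{0}^{x}(x-p)^{-\al}g'(p)\,dp\big|_{x=x_{0}}$ and integrating by parts once (differentiating the kernel, integrating $g''$), then once more (using $g'(x_{0})=0$ to kill a boundary term at $x_{0}$, and then doing a final integration by parts with $g$ itself), I should arrive at an identity of the form
\[
\Gamma(1-\al)(\p^{\al}g')(x_{0})=\al\,x_{0}^{-\al-1}g(0)+\al(\al+1)\int_{0}^{x_{0}}\frac{g(p)}{(x_{0}-p)^{\al+2}}\,dp,
\]
which is non-negative since $g\geq 0$. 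This yields $(\poch\da f)(x_{0})=(\p^{\al}f')(x_{0})=-(\p^{\al}g')(x_{0})\leq 0$.

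The main obstacle is justifying the boundary terms that appear at $p=x_{0}$ in both integrations by parts, since the kernel becomes singular there while $g$, $g'$ vanish. This is exactly where the local regularity $f''\in L^{1/(1-\beta)}(\kappa,L)$ is used: by H\"older's inequality, $|g'(p)|=\bigl|\int_{p}^{x_{0}}g''(s)\,ds\bigr|\leq \|g''\|_{L^{1/(1-\beta)}(p,x_{0})}(x_{0}-p)^{\beta}$, and a second integration gives $|g(p)|\leq C(x_{0}-p)^{1+\beta}$ near $x_{0}$. Since $\al<\beta$, the boundary terms $(x_{0}-p)^{-\al}g'(p)$ and $(x_{0}-p)^{-\al-1}g(p)$ both vanish as $p\to x_{0}^{-}$, and the resulting integrals are absolutely convergent (their integrands behave like $(x_{0}-p)^{\beta-\al-1}$). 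These estimates also justify the passage to the limit defining $\poch$ at the upper endpoint, exactly as was done in the proof of Lemma \ref{maxda}. Once these technical points are in place, the sign of the final expression finishes the proof.
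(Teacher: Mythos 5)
Your proposal is correct and follows essentially the same route as the paper: both set $g=f(x_{0})-f$, use H\"older's inequality with $f''\in L^{\frac{1}{1-\beta}}$ near $x_{0}$ to get $|g'(p)|\le C(x_{0}-p)^{\beta}$ and $g(p)\le C(x_{0}-p)^{1+\beta}$, and then integrate by parts so that the boundary terms at $x_{0}$ vanish (because $\al<\beta$) and what remains is manifestly nonnegative. The only difference is that the paper stops the integration by parts at a cutoff $\kappa>0$ (thereby never touching $g''$ near $p=0$, where only $L^{1}$ integrability is available) and concludes via an $\ve$-argument, whereas you carry it to $p=0$ and obtain the exact identity analogous to (\ref{lmm}); both are legitimate.
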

\begin{proof}
We define $g(x) = f(x_{0})-f(x)$. Then $g$ is nonnegative on $[0,x_{0}]$, $g'(x_{0})=0$ and $\poch\da g = - \poch \da f$. Let us fix $\kappa \in (0,x_{0})$. We note that for $x\in(\kappa,x_{0})$ we may estimate
\eqq{
\abs{g'(x)}\leq \int_{x}^{x_{0}}\abs{ g''(p)}dp \leq \norm{g''}_{L^{\frac{1}{1-\beta}}(\kappa,L)}\abs{x-x_{0}}^{\beta}
}{gp}
and
\[
g(x) \leq \int_{x}^{x_{0}}\abs{ g'(p)}dp \leq \int_{x}^{x_{0}}\int_{p}^{x_{0}}\abs{g''(r)}drdp
\]
\eqq{
\leq \norm{g''}_{L^{\frac{1}{1-\beta}}(\kappa,L)}\int_{x}^{x_{0}} \abs{p-x_{0}}^{\beta}dp = \norm{g''}_{L^{\frac{1}{1-\beta}}(\kappa,L)}\frac{\abs{x-x_{0}}^{\beta+1}}{\beta+1}.
}{gbezp}
Making use of these estimates we may differentiate under the integral sign as follows
\[
(\poch \da g)(x_{0}) = \frac{1}{\Gamma(1-\al)}\left(\poch\int_{0}^{x}(x-p)^{-\al}g'(p)dp\right)(x_{0})
\]
\[
\frac{1}{\Gamma(1-\al)}\left(\poch\int_{0}^{\kappa}(x-p)^{-\al}g'(p)dp\right)(x_{0})+
\frac{1}{\Gamma(1-\al)}\left(\poch\int_{\kappa}^{x}(x-p)^{-\al}g'(p)dp\right)(x_{0})
\]
\[
=-\frac{\al}{\Gamma(1-\al)}\int_{0}^{\kappa}(x_{0}-p)^{-\al-1}g'(p)dp
-\frac{\al}{\Gamma(1-\al)}\int_{\kappa}^{x_{0}}(x_{0}-p)^{-\al-1}g'(p)dp
\]
\[
+\frac{1}{\Gamma(1-\al)}\lim_{p\rightarrow x_{0}}(x_{0}-p)^{-\al}g'(p)
\]
and the last limit is equal to zero by the estimate (\ref{gp}).
Applying integration by parts we get
\[
-\frac{\al}{\Gamma(1-\al)}\int_{\kappa}^{x_{0}}(x_{0}-p)^{-\al-1}g'(p)dp =
 -\frac{\al}{\Gamma(1-\al)}\lim_{p\rightarrow x_{0}}(x_{0}-p)^{-\al-1}g(p)
 \]
 \[
 + \frac{\al}{\Gamma(1-\al)}(x_{0}-\kappa)^{-\al-1}g(\kappa) + \frac{\al(\al+1)}{\Gamma(1-\al)}\int_{\kappa}^{x_{0}}(x_{0}-p)^{-\al-2}g(p)dp.
\]
By (\ref{gbezp}) the limit equals zero.  For a fixed $\ve > 0$ we may choose $\kappa>0$ such that
\[
\abs{-\frac{\al}{\Gamma(1-\al)}\int_{0}^{\kappa}(x_{0}-p)^{-\al-1}g'(p)dp} \leq \ve.
\]
Thus we obtain
\[
(\poch \da g)(x_{0}) \geq -\ve  + \frac{\al}{\Gamma(1-\al)}(x_{0}-\kappa)^{-\al-1}g(\kappa) + \frac{\al(\al+1)}{\Gamma(1-\al)}\int_{\kappa}^{x_{0}}(x_{0}-p)^{-\al-2}g(p)dp.
\]
Since $\ve > 0$ was arbitrary we arrive at the estimate
\[
(\poch \da g)(x_{0}) \geq 0, \m{ which implies } (\poch \da f)(x_{0}) \leq 0.
\]
\end{proof}
Having proven Lemma \ref{nonpositivity}, it is not difficult to deduce the weak extremum principle for parabolic-type problems involving $\poch \da$.
\begin{lem}[Weak extremum principle] \label{weakmax}
We assume that $u$ satisfies
\[
u_{t} - \poch \da u = f \m{ in } Q_{s,T}
\]
and has the following regularity $u \in C(\overline{Q_{s,T}})$, $u_{t}\in C(Q_{s,T})$ and for every $t\in (0,T)$, for every $0<\ve<\omega<s(t)$ we have $u(\cdot,t) \in W^{2,\frac{1}{1-\beta}}(\ve,\omega)$ for some $\beta \in (\al,1]$. Let us denote the parabolic boundary of $Q_{s,T}$ by $\p \Gamma_{s,T} = \p \overline{Q_{s,T}} \setminus (\{T\} \times (0,s(T)))$. Then,
\begin{enumerate}
\item
if $f \leq 0$, then $u$ attains its maximum on $\p \Gamma_{s,T}$.
\item If $f \geq 0$, then $u$ attains its minimum on $\p \Gamma_{s,T}$.
\end{enumerate}
\end{lem}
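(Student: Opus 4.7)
The plan is to mimic the classical parabolic weak maximum principle, with Lemma \ref{nonpositivity} playing the fractional substitute for ``$u_{xx}(x_{0}) \leq 0$ at an interior spatial maximum''. First I reduce part (2) to part (1) by replacing $u$ by $-u$, which converts the source $f$ into $-f \leq 0$; hence it suffices to establish (1).

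I argue by contradiction. Suppose $M := \max_{\overline{Q_{s,T}}} u > m := \max_{\p\Gamma_{s,T}} u$, fix $\ve > 0$ with $\ve T < (M-m)/2$, and set $v := u - \ve t$. Then in $Q_{s,T}$
\[
v_{t} - \poch \da v = f - \ve \leq -\ve < 0,
\]
while $\max_{\overline{Q_{s,T}}} v > \max_{\p\Gamma_{s,T}} v$ by the choice of $\ve$. By continuity of $u$ up to $t = T$, this strict inequality persists on the truncated cylinder $\overline{Q_{s,T'}}$ for some $T' \in (0, T)$ sufficiently close to $T$. Consequently the maximum of $v$ on $\overline{Q_{s,T'}}$ is attained at some $(x_{0}, t_{0})$ with $0 < x_{0} < s(t_{0})$ and $0 < t_{0} \leq T'$.

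At this point two facts collide. On the one hand, $v_{t}(x_{0}, t_{0}) \geq 0$: if $t_{0} < T'$ this is standard interior time-criticality, and if $t_{0} = T'$ one-sided differentiation from below gives $\geq 0$, exploiting that $t_{0} < T$ so that $v_{t} = u_{t} - \ve$ is continuous at $(x_{0}, t_{0})$. On the other hand, since $u(\cdot, t_{0})$ attains its global maximum at $x_{0}$ and, by the hypothesis $u(\cdot, t_{0}) \in W^{2, \frac{1}{1-\beta}}(\eta, \omega)$ for every $0 < \eta < \omega < s(t_{0})$ (take $\omega \in (x_{0}, s(t_{0}))$), meets the regularity requirements of Lemma \ref{nonpositivity} on $[0, x_{0}]$, I obtain $\poch \da u(x_{0}, t_{0}) \leq 0$; since $\da(\ve t)$ vanishes pointwise in $x$, also $\poch \da v(x_{0}, t_{0}) \leq 0$. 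Combining, $(v_{t} - \poch \da v)(x_{0}, t_{0}) \geq 0$, contradicting the strict negativity above.

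The main delicate point is verifying that the hypotheses of Lemma \ref{nonpositivity} genuinely hold at $(x_{0}, t_{0})$: one must show $u_{x}(\cdot, t_{0}) \in AC[0, x_{0}]$ and $u_{xx}(\cdot, t_{0}) \in L^{\frac{1}{1-\beta}}(\kappa, x_{0})$ for every $\kappa > 0$. The second is immediate from the local Sobolev assumption since $x_{0} < s(t_{0})$. The first is handled by combining this local regularity with the boundary condition $u_{x}(0, \cdot) = 0$ and the continuity of $u_{x}$ recorded in Theorem \ref{existance}, treating the cases $\al \in (\tfrac{1}{2}, 1)$ and $\al \in (0, \tfrac{1}{2}]$ separately in order to extend absolute continuity up to the origin.
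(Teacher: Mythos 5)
Your proof is correct and follows essentially the same route as the paper: reduce to the case $f\leq 0$, perturb ($u-\ve t$ here versus $(u-M)e^{-\ve t}$ in the paper) to force a strict differential inequality, locate a maximum off the parabolic boundary, and combine $v_{t}\geq 0$ there with Lemma \ref{nonpositivity}; your truncation to $T'<T$ is in fact slightly more careful than the paper's argument about the top of the cylinder, where $u_{t}$ need not be continuous. One caution about your final paragraph: you invoke the boundary condition $u_{x}(0,\cdot)=0$ and Theorem \ref{existance} to verify the hypotheses of Lemma \ref{nonpositivity}, but the present lemma is stated for (and later applied to) auxiliary functions that are not solutions of (\ref{niech}), so that verification must rest on the stated regularity hypotheses alone --- which is also all the paper itself uses at this point.
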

\begin{proof}
The proof follows the standard argument for the linear parabolic equations. Firstly, we will prove the first part of the lemma. Let us assume that at some point $(x_{0},t_{0}) \in \overline{Q_{s,T}} \setminus \p \Gamma_{s,T} $ we have $u(x_{0},t_{0}) > \max_{\p \Gamma_{s,T}}=:M.$ We fix $\ve > 0$ and we denote $v(x,t) = (u(x,t)-M)e^{-\ve t}$ Then $v$ attains its positive maximum in some point $(x_{1},t_{1}) \in Q_{s,T}$. We may calculate
\[
v_{t} = u_{t}e^{-\ve t} - \ve v, \hd \hd \hd \poch \da v = e^{-\ve t}\poch \da u.
\]
Thus
\[
v_{t} - \poch \da v = -\ve v + fe^{-\ve t}.
\]
In particular
\[
v_{t}(x_{1},t_{1}) - \poch \da v(x_{1},t_{1}) = -\ve v(x_{1},t_{1}) + f(x_{1},t_{1})e^{-\ve t_{1}} < 0.
\]
Since $(x_{1},t_{1})$ is a maximum point we have $v_{t}(x_{1},t_{1})\geq 0$ and by Lemma \ref{nonpositivity} we infer that $\poch \da v(x_{1},t_{1}) \leq 0$. Hence, $v_{t}(x_{1},t_{1}) - \poch \da v(x_{1},t_{1}) \geq 0$, which  leads to a contradiction. Setting $u:=-u$ we obtain the second part of the claim.
\end{proof}

\subsection{Estimates}
In the next two lemmas, we derive the bounds for the Caputo derivative of the solution to (\ref{niech}) and for the solution itself. This is a significant step in the proof of the existence of  solution to (\ref{Stefan}).
\begin{lem}\label{nonposda}
Let us assume that the assumptions of Theorem \ref{existance} are satisfied and additionally $\uz \geq 0$. Let $u$ be a solution to (\ref{niech}) given by Theorem \ref{existance}, then $(\da u)(s(t),t)~\leq~0$. Furthermore, if $\uz \not\equiv0$, then for every $t \in (0,T]$ we have $(\da u)(s(t),t)<0$.
\end{lem}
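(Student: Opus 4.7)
The plan is to reduce the claim to the one-variable boundary-point fact, Lemma~\ref{maxda}. Fix $t \in (0,T]$: by Theorem~\ref{existance} the slice $x\mapsto u(x,t)$ is continuous on $[0,s(t)]$ with $u_x$ bounded, hence belongs to $W^{1,1/(1-\beta)}(0,s(t))$ for some $\beta\in(\al,1)$, and it vanishes at the right endpoint $s(t)$. If I can show $u\ge 0$ on $\overline{Q_{s,T}}$, then $u(s(t),t)=0$ is the minimum of this slice and Lemma~\ref{maxda}(2) applied with $L=s(t)$ and $x_0=s(t)$ yields $(\da u)(s(t),t)\le 0$; the sharper part of the same lemma then delivers $(\da u)(s(t),t)<0$ whenever the slice is not identically $0$.

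The first substantive step is therefore non-negativity. Lemma~\ref{weakmax} with $f=0$ places the minimum of $u$ on the parabolic boundary $\p \Gamma_{s,T}$, and on the faces $\{t=0\}$ and $\{x=s(t)\}$ the data are $\uz\ge 0$ and $0$ respectively. Only the left Neumann face $\{0\}\times[0,T]$ requires a dedicated argument: supposing the global minimum $m<0$ were attained at $(0,t_0)$ with $t_0>0$, the function $t\mapsto u(0,t)$ has a minimum at $t_0$, so $u_t(0,t_0)\le 0$. Setting $g(x):=u(x,t_0)-m\ge 0$ with $g(0)=0=g'(0)$ (Neumann condition), I rewrite $\da u(\cdot,t_0)=I^{1-\al}g'$ and $\poch \da u(\cdot,t_0)=\p^{\al} g'$, and analyse the behaviour of $\p^{\al} g'$ near the origin. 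Because $g$ has a non-negative double zero at $0$, this behaviour forces $\poch\da u(0,t_0)\ge 0$, strictly unless $g$ is identically zero near $x=0$; combined with the equation $u_t=\poch\da u$ this contradicts $u_t(0,t_0)\le 0$, and in the degenerate case $g\equiv 0$ on an interval an elementary propagation yields $u(\cdot,t_0)\equiv m$ on $[0,s(t_0)]$, contradicting $u(s(t_0),t_0)=0$. Hence $u\ge 0$.

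To obtain the strict inequality under $\uz\not\equiv 0$ I need $u(\cdot,t)\not\equiv 0$ on $[0,s(t)]$ for every $t\in(0,T]$. Were $u(\cdot,t_*)\equiv 0$ to occur for some $t_*\in(0,T]$, uniqueness for the forward problem starting at $t_*$ with zero initial datum (Theorem~\ref{existance}) would force $u\equiv 0$ on $[t_*,T]$; a strong-minimum / unique-continuation argument tailored to $\poch\da$ in this fractional parabolic setting then propagates the vanishing backward in time, giving $\uz\equiv 0$, a contradiction. With this non-triviality secured, the sharper half of Lemma~\ref{maxda} supplies $(\da u)(s(t),t)<0$ for all $t\in(0,T]$.

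The principal obstacle is the Neumann-face analysis in the non-negativity step: Lemma~\ref{weakmax} alone does not rule out a negative minimum on $\{0\}\times[0,T]$, and a fractional Hopf-type boundary-point calculation at $x=0$, exploiting $u_x(0,\cdot)=0$ together with the non-local structure of $\p^{\al}$ at the left endpoint, is what closes the estimate; the backward-propagation argument underlying the strict inequality is a secondary, but related, technical point.
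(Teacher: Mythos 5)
Your skeleton matches the paper's: prove $u\ge 0$, apply Lemma \ref{maxda}(2) at $x_{0}=s(t)$ to get $(\da u)(s(t),t)\le 0$, and rule out the degenerate case $u(\cdot,t_{0})\equiv 0$ by propagating the vanishing backward in time. However, both of the steps you yourself identify as the real work are left unproved, and the first rests on a false claim. For the Neumann face you assert that a non-negative double zero of $g(x)=u(x,t_{0})-m$ at $x=0$ forces $\poch\da u(0,t_{0})>0$ unless $g\equiv 0$ near $0$. This is not true: for $g(x)=cx^{2}$ with $c>0$ one computes $I^{1-\al}g'(x)=\frac{2c}{\Gamma(3-\al)}x^{2-\al}$, hence $\p^{\al}g'(x)=\frac{2c(2-\al)}{\Gamma(3-\al)}x^{1-\al}\to 0$ as $x\to 0^{+}$, so at a quadratically degenerate boundary minimum you only get $\poch\da u(0,t_{0})=0$, which is perfectly consistent with $u_{t}(0,t_{0})\le 0$ and yields no contradiction. (There is the secondary problem that Theorem \ref{existance} gives $u_{t},\poch\da u\in C(Q_{s,T})$ only in the open domain, so evaluating the equation at $x=0$ is itself not justified.) The paper avoids all of this with a one-line barrier: set $u_{\ve}=u-\ve x$; then $u_{\ve}$ solves the equation with the nonnegative source $\ve x^{-\al}/\Gamma(1-\al)$, Lemma \ref{weakmax} places its minimum on the parabolic boundary, and $u_{\ve,x}(0,t)=-\ve<0$ excludes the left face outright; letting $\ve\to 0$ gives $u\ge 0$. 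You should adopt this (or an equivalent) device rather than a Hopf-type computation at the degenerate Neumann corner.

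The second gap is larger. The backward propagation --- ``$u(\cdot,t_{0})\equiv 0$ on $[0,s(t_{0})]$ implies $u\equiv 0$ on $Q_{s,t_{0}}$'' --- is not a citation-level fact in this setting; it is the main technical content of the paper's proof. The paper establishes it by contradiction with an explicit barrier $\eta(x,t)=\ve e^{-a(t-t_{1})}[\delta^{2}-(x-x_{1}-\delta)^{2}]^{2}$ supported where $u(\cdot,t_{1})>0$, verifying $-\eta_{t}+\poch\da\eta\ge 0$ through a three-case analysis (this is where the constants $\kappa_{\al}$ and $\omega_{\al,\delta}$ enter) and then applying the weak minimum principle to $u-\eta$ on a cylinder to conclude $u(\cdot,t_{0})>0$ somewhere, a contradiction. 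Your proposal names the needed statement (``a strong-minimum / unique-continuation argument tailored to $\poch\da$'') but supplies no proof, and no such result is available off the shelf for this operator; forward uniqueness from time $t_{*}$ is irrelevant, since only the backward step matters and it is precisely the part that must be constructed. As written, the proposal is a correct outline with the two decisive estimates missing.
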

\begin{proof}
By Theorem \ref{existance} function $u$ satisfies the assumptions of Lemma \ref{weakmax}. Hence, it attains its minimum at the parabolic boundary. In order to show that the minimum is attained on the curve $(s(t),t)$ we introduce $u_{\ve} = u - \ve x$. Then $u_{\ve}$ satisfies
 \[
 \left\{ \begin{array}{ll}
u_{\ve t} - \poch \da u_{\ve} = \frac{\ve x^{-\al}}{\Gamma(1-\al)} & \textrm{ in }  Q_{s,T}, \\
u_{\ve x}(0,t) = -\ve, u_{\ve}(s(t),t) = -\ve s(t)  & \textrm{ for  } t \in (0,T), \\
u_{\ve}(x,0) =\uz(x) -\ve x & \textrm{ for } 0<x<b. \\
\end{array} \right. \]

\no From Lemma \ref{weakmax} we deduce that $u_{\ve}$ also attains its minimum on the parabolic boundary and due to $u_{\ve,x}(0,t)<0$ we obtain that
\[
u_{\ve}(x,t) \geq  \min\{\uz(x)-\ve x, -\ve s(t)\} \geq -\ve s(t),
\]
where we used the assumption $\uz \geq 0$.
Hence, $u(x,t) = u_{\ve}(x,t)+\ve x \geq -\ve s(t)$. Passing to the limit with $\ve$ we obtain that $u \geq 0$.
Hence, $u$ attains its minimum, which is equal to zero, on the curve $(s(t),t)$. Applying the minimum principle in spatial dimension (Lemma \ref{maxda}), we obtain that $(\da u)(s(t),t) \leq 0$ for every $t\in [0,T]$.\\

It remains to show that if $\uz \not\equiv 0$, then $(\da u)(s(t),t) <0$ for every $t \in (0,T]$. In the proof we will employ the ideas introduced in \cite[Appendix 2, Lemma 2.1]{Andreucci}. We will proceed by contradiction. Let us assume that for fixed $t_{0}>0$ we have $(\da u)(s(t_{0}),t_{0}) =0$. Then, by Lemma \ref{maxda} we infer that $u(x,\tz) = 0$ for every $x \in [0,s(t_{0})]$. We will prove that this leads to $u\equiv0$ on $Q_{s,t_{0}}$, which together with continuity of $u$ contradicts $\uz \not\equiv 0$. Let us assume that $u\not\equiv0$ on $Q_{s,t_{0}}$. Then, by continuity of $u$, we may choose  $0<\tj < \tz$, $x_{1} \in (0,s(\tj))$ and small $\delta > 0$, such that $u(x,\tj)>0$ for every $x$ belonging to $[x_{1},x_{1}+2\delta]$.
 \\  We introduce nonnegative auxiliary function $\eta: [0, \xj + 2\delta] \times [\tj,\tz]  \rightarrow \mathbb{R}$ as follows
\[
\eta(x,t) = \left\{ \begin{array}{ll}
0 & \textrm{ on }  [0, \xj] \times [\tj,\tz], \\
 \ve e^{-a(t-\tj)}[\delta^{2}-(x-\xj-\delta)^{2}]^{2}  & \textrm{ on  } (\xj, \xj + 2\delta] \times [\tj,\tz], \\
\end{array} \right.\]
where the constant $a > 0$ will be chosen later and $\ve>0$ is chosen in such a way that
\[
 \ve[\delta^{2}-(x-\xj-\delta)^{2}]^{2} \leq u(x,\tj) \m{ for every  } x \in (\xj, \xj + 2\delta).
\]
Such a choice of $\ve > 0$ is possible, if $\delta> 0$ is small, due to the continuity of $u$ and the fact that $u(\xj,\tj)>0$. Since $\eta(\xj,t) = \eta_{x}(\xj,t) = 0$, it is easy to notice that  $\eta$ satisfies regularity assumptions of Lemma \ref{weakmax} on $[0, \xj + 2\delta] \times [\tj,\tz] $. Furthermore, we have
\eqq{\eta(0,t) = \eta(\xj+2\delta,t)=0 \m{ for every } t \in [\tj,\tz]. }{e2}
By the assumption concerning $\ve$ there holds
\eqq{\eta(x,\tj)  \leq u(x,\tj) \m{ for every  } x \in [0, \xj + 2\delta].}{e3}
Our aim is to apply the weak minimum principle, obtained in Lemma \ref{weakmax}, to the function $w:=u-\eta$. To this end, we will show that for suitably chosen $a > 0$ we have
\eqq{-\eta_{t} + \poch \da \eta \geq 0 \m{ in }  (0, \xj + 2\delta) \times (\tj,\tz].} {e4}
At first we note that, by the definition of $\eta$ we have
\[
-\eta_{t} + \poch \da \eta \equiv 0 \m{ on } (0, \xj] \times (\tj,\tz] .
\]
We note that for $x > \xj$ we may write
 \[
 (\poch \da \eta)(x,t) =  \frac{1}{\Gamma(1-\al)}\poch\int_{\xj}^{x}(x-p)^{-\al}\eta_{x}(p,t)dp=: (\poch \da_{\xj}\eta)(x,t).
 \]
In order to calculate $\poch \da_{\xj}\eta$ we note that $\eta_{x}(\xj,t) = 0$, thus $\poch \da_{\xj}\eta =  \da_{\xj}\eta_{x}$.
Let us perform the calculations. We have
\[
\eta_{x}(x,t) = -4\ve e^{-a(t-\tj)}[\delta^{2}-(x-\xj-\delta)^{2}](x-\xj-\delta)
\]
and
\[
\eta_{xx}(x,t) =  -4\ve e^{-a(t-\tj)}(\delta^{2}-3(x-\xj-\delta)^{2}).
\]
Thus, we may write
\[
\poch \da_{\xj}\eta = \frac{4\ve e^{-a(t-\tj)}}{\Gamma(1-\al)}\left[3\int_{\xj}^{x}(x-p)^{-\al}(p-\xj-\delta)^{2}dp - \delta^{2}\int_{\xj}^{x}(x-p)^{-\al}dp\right].
\]
Calculating the last integral we obtain, that for $(x,t) \in (\xj,\xj+2\delta) \times (\tj,\tz)$ there holds
\[
-\eta_{t} + \poch \da \eta =
\]
\eqq{
\ve e^{-a(t-\tj)}\left(a[\delta^{2}-(x-\xj-\delta)^{2}]^{2} +
\frac{4}{\Gamma(1-\al)}\left[3\int_{\xj}^{x}(x-p)^{-\al}(p-\xj-\delta)^{2}dp - \frac{\delta^{2}(x-\xj)^{1-\al}}{1-\al}\right]
\right).
}{szal}
We will show that the last expression is nonnegative for every $(x,t) \in (\xj,\xj+2\delta) \times (\tj,\tz)$ for suitably chosen $a>0$.
At first, we note that
\eqq{
\kappa_{\al}:=\frac{1}{2-\al}\left[3 - \sqrt{3}\sqrt{\frac{1+\al}{3-\al}}\right] > 1 \m{ for every } \al \in (0,1).}
{kal}
Let us introduce
\eqq{
\omega_{\al,\delta}:=\frac{2\delta (\kappa_{\al}-1)}{\kappa_{\al}}.
}{omal}
We will consider three cases.\\
1. Let $x\in[\xj+\frac{1}{3}\delta,\xj + 2\delta -\omega_{\al,\delta}]$. Then,
\[
[\delta^{2}-(x-\xj-\delta)^{2}]^{2}\geq [\delta^{2}-(\delta - \omega_{\al,\delta})^{2}]^{2} \m{ and } (x-\xj)^{1-\al} \leq \left(2\delta - \omega_{\al,\delta}\right)^{1-\al}.
\]
Thus, for $a \geq \frac{4\delta^{2}}{\Gamma(2-\al)}\frac{(2\delta -  \omega_{\al,\delta})^{1-\al}}{[\delta^{2}-(\delta - \omega_{\al,\delta})^{2}]^{2}}$ we have
\[
a[\delta^{2}-(x-\xj-\delta)^{2}]^{2} \geq \frac{4\delta^{2}(x-\xj)^{1-\al}}{\Gamma(2-\al)}
\]
and the expression (\ref{szal}) is nonnegative.\\
2. If $x \in (\xj, \xj + \frac{1}{3}\delta]$, we may notice that
\[
3\int_{\xj}^{x}(x-p)^{-\al}(p-\xj-\delta)^{2}dp \geq 3\frac{4\delta^{2}}{9}\int_{\xj}^{x}(x-p)^{-\al}dp = \frac{4\delta^{2}}{3}\frac{(x-\xj)^{1-\al}}{1-\al},
\]
which ensures that (\ref{szal}) is nonnegative.\\
3. It remains to deal with the case $x \in [\xj + 2\delta - \omega_{\al,\delta}, \xj + 2 \delta)$. We apply the substitution $p = \xj + r(x-\xj)$ to obtain that
\[
3\int_{\xj}^{x}(x-p)^{-\al}(p-\xj-\delta)^{2}dp = 3 \izj (1-r)^{-\al}(r(x-\xj) - \delta)^{2}dr (x-\xj)^{1-\al}.
\]
Thus, it is enough to prove that for each $x \in [\xj + 2\delta - \omega_{\al,\delta}, \xj + 2 \delta]$
\[
3 \izj (1-r)^{-\al}(r(x-\xj) - \delta)^{2}dr \geq \frac{\delta^{2}}{1-\al},
\]
which is equivalent with
\[
3 \izj (1-r)^{-\al}r^{2}dr(x-\xj)^{2} - 6\delta\izj (1-r)^{-\al}rdr(x-\xj) + \frac{2\delta^{2}}{1-\al} \geq 0.
\]
Calculating the above integrals and dividing the inequality by $2$ we have
\[
\frac{\delta^{2}}{1-\al} -3\delta(x-\xj)\frac{\Gamma(1-\al)}{\Gamma(3-\al)} + 3(x-\xj)^{2}\frac{\Gamma(1-\al)}{\Gamma(4-\al)} \geq 0.
\]
Multiplying the inequality by $\frac{\Gamma(2-\al)}{\Gamma(1-\al)}$ we obtain another equivalent inequality
\[
\delta^{2} - \frac{3(x-\xj)}{2-\al}\delta + \frac{3(x-\xj)^{2}}{(2-\al)(3-\al)} \geq 0.
\]
By direct calculations we see that the roots of the function
\[
f(\delta):=\delta^{2} - \frac{3(x-\xj)}{2-\al}\delta + \frac{3(x-\xj)^{2}}{(2-\al)(3-\al)}
\]
are given by the formula
\[
\delta_{\mp} = \frac{(x-\xj)}{2(2-\al)}\left[3 \mp \sqrt{3}\sqrt{\frac{1+\al}{3-\al}}\right].
\]
Thus, it is enough to show that $\delta \leq \delta_{-}$ for every choice of $x \in [\xj + 2\delta - \omega_{\al,\delta}, \xj + 2 \delta]$.
Recalling the definitions (\ref{kal}) and (\ref{omal}), we have
\[
\delta_{-} = \kappa_{\al}\frac{(x-\xj)}{2} \geq  \kappa_{\al}\frac{2\delta - \omega_{\al,\delta}}{2} = \delta.
\]
This way we have shown that (\ref{szal}) is nonnegative for $x \in [\xj + 2\delta - \omega_{\al,\delta}, \xj + 2 \delta)$. Summing up the result, we obtained that (\ref{szal}) is nonnegative for every $x\in(\xj,\xj+2\delta)$ and, as a consequence, (\ref{e4}) holds.\\
Let us define $w = u-\eta$.
Then, applying (\ref{e2}), (\ref{e3}), (\ref{e4}) we obtain that
\[
 \left\{ \begin{array}{ll}
w_{t} - \poch D^{\al}_{\xj} w \geq 0 & \textrm{ in }  (0, \xj + 2\delta) \times (\tj,\tz], \\
w(0,t) = u(0,t) \geq 0, \ \ w(\xj+2\delta,t) = u(\xj+2\delta,t) \geq 0 & \textrm{ for  } t \in [\tj,\tz], \\
w(x,\tj) \geq 0 & \textrm{ for } x \in [0, \xj + 2\delta]. \\
\end{array} \right.
\]
Obviously Lemma \ref{weakmax} is true also if we consider a problem in a cylindrical domain, thus we may apply the minimum principle, to obtain that $w$ attains its minimum on the parabolic boundary of $[0, \xj + 2\delta] \times [\tj,\tz]$. Thus, $w \geq 0$ in  $[0, \xj + 2\delta] \times [\tj,\tz]$. In particular
\[
u(x,\tz) \geq \eta(x,\tz) = \ve e^{-a(\tz - \tj)}[\delta^{2}-(x-\xj-\delta)^{2}]^{2} > 0 \m{ for every } x \in (\xj,\xj+2\delta).
\]
This is a contradiction with $u(x,\tz) = 0$ on $[0,s(\tz)]$. Thus, we obtained that $u\equiv 0$ in $Q_{s,\tz}$, which is again a contradiction with $\uz \not\equiv 0$. This way we proved the lemma.
\end{proof}
We have just proven that $(\da u)(s(t),t)$ is non positive for every $t \in [0,T]$. In the next lemma, we will find the lower bound for  $(\da u)(s(t),t)$. We will also find a suitable estimate for a solution $u$.

\begin{lem}\label{daboundlem}
Let us assume that $\uz \geq 0$ satisfies the assumptions of Theorem \ref{existance}. Let us assume additionally that there exists $M>0$ such that
\eqq{
\uz(x)\leq \frac{M\Gamma(2-\al)}{b^{1-\al}}(b-x) \m{ for every } x \in [0,b].
}{Mu}
 Furthermore, let $s$ fulfill the assumption (\ref{zals}), where the constant $M$ comes from (\ref{Mu}). Let $u$ be a solution to (\ref{niech}) given by Theorem \ref{existance}. Then, there hold the following bounds,
\eqq{(\da u)(s(t),t) \geq -M \m{ for every } t \in (0,T)}{dabound}
and
\eqq{0 \leq u(x,t)\leq  M \Gamma(2-\al)s^{\al-1}(t)(s(t)-x) \m{ for } (x,t) \in Q_{s,T}.}{estu}
\end{lem}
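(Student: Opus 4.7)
The plan is to prove both bounds by comparing $u$ to the explicit linear barrier
\[
w(x,t) := M\Gamma(2-\al)\,s^{\al-1}(t)\,(s(t)-x),
\]
using the weak extremum principle (Lemma \ref{weakmax}), and then to extract (\ref{dabound}) by applying the spatial minimum principle (Lemma \ref{maxda}) at the moving endpoint $x=s(t)$.

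First I would record two direct calculations: $\da(s(t)-x) = -x^{1-\al}/\Gamma(2-\al)$, so that $\poch \da w = -M(1-\al)s^{\al-1}(t)x^{-\al} \leq 0$; and $w_t \geq 0$ in $Q_{s,T}$ thanks to $\dot{s}>0$ from (\ref{zals}). Combining these, $z := w - u$ satisfies $z_t - \poch \da z \geq 0$. The parabolic boundary of $Q_{s,T}$ decomposes into $\{t=0\}$, $\{x=s(t)\}$ and $\{x=0\}$; on the first two I would verify $z\geq 0$ directly from $s(0)=b$ together with hypothesis (\ref{Mu}) and from $u(s(t),t)=0=w(s(t),t)$ respectively. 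The left boundary $\{x=0\}$ is the delicate one, since the sign of $z(0,t)$ is not given a priori; instead, I would compute $z_x(0,t) = -M\Gamma(2-\al)s^{\al-1}(t) < 0$, exploiting the Neumann condition $u_x(0,t)=0$ (well defined for $t>0$ thanks to the regularity asserted in Theorem \ref{existance}), to rule out $\{x=0\}$ as a minimum point of $z$. Lemma \ref{weakmax} then forces $z \geq 0$ throughout $Q_{s,T}$, i.e., $u \leq w$; nonnegativity of $u$ has already been established in Lemma \ref{nonposda}, so this gives (\ref{estu}).

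To deduce (\ref{dabound}) I would fix $t \in (0,T]$ and observe that $x \mapsto (w-u)(x,t)$ is nonnegative on $[0,s(t)]$ with a zero at $x=s(t)$, hence attains its minimum there. Theorem \ref{existance} provides the $W^{1,\infty}$ spatial regularity of $(w-u)(\cdot,t)$ on $[0,s(t)]$ needed to apply Lemma \ref{maxda}, which yields $\da(w-u)(s(t),t) \leq 0$. A short computation using the constant derivative $w_x \equiv -M\Gamma(2-\al)s^{\al-1}(t)$ gives $\da w(s(t),t) = -M$, whence $\da u(s(t),t) \geq -M$.

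The main obstacle I expect lies at the left endpoint: the weak extremum principle only locates the minimum of $z$ on the parabolic boundary without determining its sign there, so the Neumann condition on $u$ and the strictly negative value of $w_x(0,t)$ must be combined to exclude $\{x=0\}\times(0,T]$. Beyond this, one has to confirm that the regularity of $u$ supplied by Theorem \ref{existance}---continuity of $u_x$ up to both endpoints for $t>0$---is exactly what is required to justify both the Neumann argument at $x=0$ and the endpoint application of Lemma \ref{maxda} at $x=s(t)$.
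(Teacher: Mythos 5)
Your proposal is correct and follows essentially the same route as the paper: the identical linear barrier $M\Gamma(2-\al)s^{\al-1}(t)(s(t)-x)$, the weak extremum principle applied to the difference (with the left boundary excluded via the Neumann condition against the strictly signed slope of the barrier), and Lemma \ref{maxda} at $x=s(t)$ combined with $\da w(s(t),t)=-M$ to obtain (\ref{dabound}). The only cosmetic differences are the sign convention in the comparison function and your explicit appeal to Lemma \ref{nonposda} for $u\geq 0$, which the paper leaves implicit.
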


\begin{rem}
We note that in the case $\al \in (\frac{1}{2},1)$ the assumption (\ref{Mu}) is trivial, since from $\uz \in H^{1+\al}(0,1)$ follows that $\uz$ is Lipschitz continuous.
\end{rem}

\begin{proof}

In the proof we follow the ideas introduced in \cite[Proposition 4.2]{Andreucci}, where the author consider the classical Stefan problem. We define an auxiliary function $v$ by the formula
\[
v(x,t) =M_{0} s^{\al-1}(t)(s(t)-x),
\]
where $M_{0} = M \Gamma(2-\al)$. Then we may calculate
\[
(\da v)(s(t),t) = -\frac{M_{0}s^{\al-1}(t)}{\Gamma(1-\al)}\int_{0}^{s(t)}(s(t)-p)^{-\al}dp = -\frac{M_{0}}{\Gamma(2-\al)} = -M.
\]
Moreover, making use of (\ref{Mu}) we obtain
\[
v(s(t),t) = 0, \hd \hd v_{x}(x,t) = -M_{0}s^{\al-1}(t) <  0=u_{x}(0,t), \hd \hd v(x,0) = \frac{M_{0}}{b^{1-\al}}(b-x) \geq u_{0}(x).
\]
We may calculate further
\[
v_{t}(x,t) = M_{0}\al s^{\al-1}(t)\dot{s}(t) + (1-\al)M_{0}s^{\al-2}(t)\dot{s}(t)x,
\]
\[
\poch \da v(x,t) = -\frac{M_{0}s^{\al-1}(t)}{\Gamma(1-\al)}x^{-\al}.
\]
Together we have
\[
v_{t}(x,t) - \poch \da v(x,t)
\]
\[
= M_{0}\al s^{\al-1}(t)\dot{s}(t) + (1-\al)M_{0}s^{\al-2}(t)\dot{s}(t)x+\frac{M_{0}s^{\al-1}(t)}{\Gamma(1-\al)}x^{-\al}=:-f(x,t)\geq 0.
\]
We define the function $w = u-v$. Then $w$ satisfies
\[
 \left\{ \begin{array}{ll}
w_{t} - \poch \da w = f & \textrm{ in }  Q_{s,T}, \\
w_{x}(0,t) > 0, \ \ w(s(t),t) = 0 & \textrm{ for  } t \in (0,T), \\
w(x,0) \leq 0 & \textrm{ for } 0<x<s(0). \\
\end{array} \right.
\]
We may apply the weak maximum principle from Lemma \ref{weakmax} to function $w$ to obtain that $\max_{\overline{Q_{s,T}}}w =\max_{\p\Gamma_{s,T}}w$. We note that $w(x,0) \leq 0$ and $w_{x}(0,t) > 0$ and $w(s(t),t) = 0$, thus $w \leq 0$ and we obtain (\ref{estu}). Moreover, $w$ must admit its maximum on the part of the boundary $(s(t),t)$, where it is equal to zero. Thus, by Lemma \ref{maxda}, we get $(\da w)(s(t),t) \geq 0$, thus $(\da u)(s(t),t) \geq (\da v)(s(t),t)~=~-M$.
\end{proof}

\subsection{A proof of the final result}
Finally, we are ready to prove the theorem concerning the existence and uniqueness of the regular solution~to~(\ref{Stefan}). At first, we will show the existence of the solution. The method of the proof relays on the construction of the free boundary $s(\cdot)$ by the Schauder fixed point theorem.
\no Subsequently, we show that the obtained solution is unique. It will be done by proving the monotone dependence of solutions upon data.
\begin{theorem}\label{final}
Let $b, T > 0$ and $\al \in (0,1)$. Let us assume that $\uz \in H^{1+\al}(0,b)$, $\uz' \in {}_{0}H^{\al}(0,b)$, $\uz(b)=0$ and $\uz \geq 0$, $\uz \not\equiv 0$. Further let us assume that there exists $M>0$ such that for every $x \in [0,b]$
\[
\uz(x)\leq \frac{M\Gamma(2-\al)}{b^{1-\al}}(b-x).
\]
\no Then, there exists $(u,s)$ a solution to (\ref{Stefan}), such that $s\in C^{0,1}([0,T])$, for almost all $t\in (0,T]$ there holds $0~<~\dot{s}(t)~\leq~M$, $u \in C(\overline{Q_{s,T}})$, $u_{t}, \poch \da u \in C(Q_{s,T})$. Moreover, in the case $\al \in (\frac{1}{2},1)$ $u_{x} \in C(\overline{Q_{s,T}})$, while in the case $\al \in (0,\frac{1}{2}]$ $u_{x} \in C(\overline{Q_{s,T}} \setminus (\{t=0\} \times [0,b]))$. Finally, there exists $\beta \in (\al,1)$, such that for every $t \in (0,T]$ and every $0<\ve<\omega<s(t)$ we have $u(\cdot,t)\in W^{2,\frac{1}{1-\beta}}(\ve,\omega)$.
\end{theorem}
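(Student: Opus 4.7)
The plan is to construct the free boundary $s$ via Schauder's fixed point theorem, using the Stefan condition itself to define a self-map on a suitable set of candidate boundaries. Let
\[
K := \{s \in C^{0,1}([0,T]) : s(0) = b,\; 0 \leq \dot{s}(t) \leq M \text{ a.e.}\},
\]
viewed as a closed convex bounded subset of $C([0,T])$. The linear machinery of Section~3 uses (\ref{zals}) only through the upper bound $\dot{s}/s \leq M/b$, so it extends to every $s \in K$ (even when $\dot{s}$ vanishes) and produces, via Theorem \ref{existance}, a unique $u_{s}$. By Lemma \ref{nonposda} together with Lemma \ref{daboundlem} we have $-M\le (\da u_{s})(s(t),t)\le 0$ for all $t\in(0,T]$. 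Hence we may define
\[
\mathcal{T}(s)(t) := b - \int_0^t (\da u_{s})(s(\tau),\tau)\,d\tau,
\]
which satisfies $\mathcal{T}(s)\in K$, and the family $\mathcal{T}(K)$ is uniformly Lipschitz (with constant $M$) and hence precompact in $C([0,T])$ by Arzelà--Ascoli.

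The main technical work is showing that $\mathcal{T}\colon K\to K$ is continuous in the $C([0,T])$ topology. Given $s_n\to s$ uniformly in $K$, I would pass to the transformed problem (\ref{niecv}) and study the mild solutions $v_n$ defined by (\ref{wzorv}), with evolution operators $G_n(t,\sigma)$ generated by $A_{2,n}(t) = s_n^{-1-\al}(t)\poch\da$ and drift $x\dot{s}_n/s_n$. Uniform convergence $s_n\to s$ gives uniform convergence $A_{2,n}(t)\to A_2(t)$ in $B(\dd,L^{2}(0,1))$, which by a standard perturbation argument (in the spirit of \cite[Corollary 6.1.8]{Lunardi}) yields convergence of the evolution operators $G_n(t,\sigma)\to G(t,\sigma)$ in the operator norm uniformly on compact subsets of $\{0\le\sigma<t\le T\}$, with estimates (\ref{r1})--(\ref{r3}) holding uniformly in $n$. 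Combined with the uniform boundedness of $\dot s_n/s_n$ in $L^\infty$ and a fixed-point contraction argument as in Theorem \ref{first}, this forces $v_n\to v$ in $C([0,T];\dd)$. The local regularity of Corollary~\ref{regost} then lets us take the trace and deduce $(\da u_{s_n})(s_n(t),t)\to (\da u_s)(s(t),t)$ uniformly in $t$, so that $\mathcal{T}(s_n)\to\mathcal{T}(s)$ uniformly.

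Schauder's fixed point theorem then yields $s^*\in K$ with $\mathcal{T}(s^*)=s^*$. Setting $u := u_{s^*}$, the identity
\[
\dot{s}^*(t) = -(\da u)(s^*(t),t) \quad \text{for a.e.\ } t\in(0,T)
\]
holds by construction, and the three remaining conditions of (\ref{Stefan}) are built into the definition of $u_{s^*}$ via Theorem \ref{existance}. Because $\uz\geq 0$ and $\uz\not\equiv 0$, the sharp form of Lemma \ref{nonposda} gives $(\da u)(s^*(t),t)<0$ for every $t\in(0,T]$, so $\dot{s}^*(t)>0$ a.e., matching the stated conclusion. All the declared regularity properties of $u$ (continuity up to the boundary, the $C^{1,0}$-type continuity of $u_x$ distinguishing the two ranges of $\al$, the interior Sobolev regularity $u(\cdot,t)\in W^{2,\frac{1}{1-\beta}}(\ve,\omega)$, and the continuity of $u_t$ and $\poch \da u$ on $Q_{s^*,T}$) transfer directly from Theorem \ref{existance}.

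The principal obstacle is the continuity of $\mathcal{T}$. The delicacy is that $s_n\to s$ only in $C$, so $\dot{s}_n$ converges at best in weak$^*$-$L^\infty$, while the linear theory of Section~3 was constructed precisely to accommodate merely $L^\infty$ (and not H\"older) $\dot{s}$. Passing to the limit in (\ref{wzorv}) therefore cannot appeal to any Hölder regularity in time and must rely on the smoothing properties (\ref{r1})--(\ref{r3}) of the non-autonomous evolution family together with the fact that the limiting drift $x\dot s/s$ can be recovered from the equation for $v$ even when the approximating drifts converge only weakly. Controlling the dependence of the two-parameter family $G_n(t,\sigma)$ on the coefficient $s_n$ in the operator norm, uniformly away from the diagonal, is the key estimate that drives the whole continuity argument.
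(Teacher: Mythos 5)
Your overall architecture (Schauder fixed point on a compact convex set of Lipschitz boundaries, with the self-map property coming from Lemma \ref{nonposda} and the bound (\ref{dabound}), and the final identification of the regularity from Theorem \ref{existance}) coincides with the paper's. The proof, however, stands or falls on the continuity of the map $s\mapsto \mathcal{T}(s)$, and this is exactly where your argument has a genuine gap. You propose to prove continuity by showing $v_{n}\to v$ in $C([0,T];\dd)$ via convergence of the evolution operators $G_{n}(t,\sigma)\to G(t,\sigma)$. The operator part is unproblematic, since $A_{2,n}(t)=s_{n}^{-1-\al}(t)\poch\da$ depends on $s_{n}$ only, not on $\dot{s}_{n}$. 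But the Duhamel term $\int_{0}^{t}G_{n}(t,\sigma)\frac{\dot{s}_{n}(\sigma)}{s_{n}(\sigma)}xv_{n,x}(\cdot,\sigma)d\sigma$ contains $\dot{s}_{n}$, which under uniform convergence of $s_{n}$ converges only weak-$*$ in $L^{\infty}(0,T)$. Pairing a weak-$*$ convergent scalar sequence against a fixed Bochner-integrable $\dd$-valued function yields, for each $t$, only \emph{weak} convergence of the integral in $\dd$, not norm convergence, and certainly not convergence in $C([0,T];\dd)$. Upgrading this to the uniform convergence of the boundary trace $(\da u_{s_{n}})(s_{n}(t),t)$ --- which is a continuous functional of $v_{n,x}(\cdot,t)$ in the strong ${}_{0}H^{\al}$ topology but not obviously in any topology you actually control --- is precisely the step you defer to ``a standard perturbation argument'' and to ``recovering the drift from the equation.'' Neither is standard here, and nothing in Section 3 of the paper supplies it. As written, the continuity of $\mathcal{T}$ is asserted rather than proved.

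The paper avoids this obstruction entirely by never comparing the evolution operators. It first integrates the equation $u_{t}=\poch\da u$ in $x$ over $(0,s(\tau))$, using the divergence form and the conditions $\da u(0,\tau)=0$, $u(s(\tau),\tau)=0$, to rewrite the fixed-point map as
\begin{equation*}
(Ps)(t)=b+\int_{0}^{b}\uz(x)dx-\int_{0}^{s(t)}u(x,t)dx,
\end{equation*}
so that the delicate boundary trace of $\da u$ is replaced by an integral of $u$ itself. Continuity of $P$ then reduces to an $L^{\infty}$ estimate for $v=u_{1}-u_{2}$ on the common domain $Q_{s_{\min},T}$: applying the weak maximum principle (Lemma \ref{weakmax}) to $v\pm\ve x$ and controlling $u_{i}$ on the curve $x=s_{\min}(t)$ by the barrier estimate (\ref{estu}) gives $\max_{Q_{s_{\min},T}}|v|\le M_{0}b^{\al-1}\max_{\tau}|s_{1}(\tau)-s_{2}(\tau)|$, and hence an explicit Lipschitz-type bound for $|(Ps_{1})(t)-(Ps_{2})(t)|$ in terms of $\max_{\tau}|s_{1}(\tau)-s_{2}(\tau)|$. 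This is both more elementary and strictly stronger than what your route would deliver. If you want to salvage your approach, you would need to prove a quantitative stability estimate for the trace $(\da u_{s})(s(t),t)$ with respect to $s$ in the uniform norm only; I recommend instead adopting the integral reformulation, which is where the real idea of this step lies.
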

\begin{proof}[Proof of Theorem \ref{final}]

We follow the idea introduced in the proof of \cite[Theorem 5.1]{Andreucci}.
We define the set
\[
\Sigma := \{s\in C^{0,1}[0,T], \hd 0< \dot{s}\leq M, \hd s(0)=b \}.
\]
Then $\Sigma$ is a compact and convex subset of a Banach space $C([0,T])$
with a maximum norm and for every $s\in \Sigma$ there exists a unique solution to (\ref{niech}), given by Theorem \ref{existance}. For $s\in \Sigma$ we define the operator
\[
(Ps)(t) = b - \izt (\da u)(s(\tau),\tau)d\tau,
\]
where $u$ is a solution to (\ref{niech}), corresponding to $s$, given by Theorem \ref{existance}. We would like to apply the Schauder fixed point theorem, thus we have to show that $P:\Sigma\rightarrow\Sigma$ and that it is continuous in maximum norm. Clearly we have $(Ps)(0) = b$ and from Lemma \ref{nonposda} and estimate (\ref{dabound}) we infer
\[
0 < \frac{d}{dt}(Ps)(t) = -(\da u)(s(t),t) \leq M.
\]
Hence, $P:\Sigma\rightarrow\Sigma$.

\no To prove that $P$ is continuous in maximum norm, we firstly note that integrating the first equation in (\ref{niech}) we obtain
\[
(\da u)(s(\tau),\tau) = \int_{0}^{s(\tau)}u_{t}(x,\tau)dx.
\]
Hence, we may rewrite the formula for $P$ as follows
\[
(Ps)(t) = b - \izt \int_{0}^{s(\tau)}u_{t}(x,\tau)dx d\tau  = b - \int_{0}^{b}\izt u_{t}(x,\tau) d\tau dx - \int_{b}^{s(t)} \int_{s^{-1}(x)}^{t}u_{t}(x,\tau)d\tau dx
\]
\eqq{
=b -\int_{0}^{b}u(x,t)dx + \int_{0}^{b}u(x,0)dx - \int_{b}^{s(t)}u(x,t)dx = b + \int_{0}^{b}\uz(x)dx - \int_{0}^{s(t)}u(x,t)dx.
}{pfo}

Now, we take arbitrary $s_{1}$, $s_{2} \in \Sigma$. Let us define $s_{min}(t)=\min\{s_{1}(t), s_{2}(t)\}$, $s_{max}(t)=\max\{s_{1}(t), s_{2}(t)\}$. We also define function $i=i(t) = 1$ if $s_{max}(t) = s_{1}(t)$ and $i = 2$ otherwise. Let $u_{1}$ and $u_{2}$ be two solutions to (\ref{niech}), given by Theorem \ref{existance}, corresponding to $s_{1}$ and $s_{2}$ respectively.
Let us define $v(x,t) = u_{1}(x,t)-u_{2}(x,t)$ and $v^{\ve}(x,t) = v(x,t)+\ve x$. Then $v^{\ve}$ satisfies
 \[
 \left\{ \begin{array}{ll}
v^{\ve}_{t} - \poch \da v^{\ve} = -\frac{\ve x^{-\al}}{\Gamma(1-\al)} & \textrm{ in } \{(x,t):0<x<s_{min}(t), 0<t<T\}=: Q_{s_{min},T}, \\
v^{\ve}_{x}(0,t) = \ve,  & \textrm{ for  } t \in (0,T), \\
v^{\ve}(x,0) = \ve x & \textrm{ in } 0<x<b. \\
\end{array} \right. \]

\no From Lemma \ref{weakmax} we obtain that $v^{\ve}$ attains its maximum on the parabolic boundary. We may estimate
\[
\abs{v^{\ve}(s_{min}(t),t)} \leq \abs{u_{1}(s_{min}(t),t)} + \abs{u_{2}(s_{min}(t),t)} + \ve s_{min}(T) = \abs{u_{i}(s_{min}(t),t)}+ \ve s_{min}(T)
\]
and since $v^{\ve}(x,0) = \ve x \leq \ve s_{min}(T)$ and $v_{x}^{\ve}(0,t) > 0$ we obtain that
\[
\max_{Q_{s_{min},T}}v^{\ve} \leq \abs{u_{i}(s_{min}(t),t)}+ \ve s_{min}(T).
\]
Applying the estimate (\ref{estu}) form Lemma \ref{daboundlem} we get
\[
\abs{u_{i}(s_{min}(t),t)} \leq M_{0}s_{max}^{\al-1}(t)(s_{max}(t)-s_{min}(t)) \leq M_{0}b^{\al-1}\max_{\tau\in [0,t]}\abs{s_{1}(\tau)-s_{2}(\tau)}.
\]
Hence,
\[
\max_{Q_{s_{min},T}} v = \max_{Q_{s_{min},T}}(v^{\ve}-\ve x) \leq M_{0}b^{\al-1}\max_{\tau\in [0,t]}\abs{s_{1}(\tau)-s_{2}(\tau)} + \ve s_{min}(T).
\]
Passing with $\ve$ to zero we obtain
\[
\max_{Q_{s_{min},T}}v \leq M_{0}b^{\al-1}\max_{\tau\in [0,t]}\abs{s_{1}(\tau)-s_{2}(\tau)}.
\]
To estimate $v$ from below we proceed similarly. We introduce $v_{\ve}(x,t) = v(x,t) - \ve x $. Then $v_{\ve}$ satisfies

 \[
 \left\{ \begin{array}{ll}
v_{\ve t} - \poch \da v_{\ve} = \frac{\ve x^{-\al}}{\Gamma(1-\al)} & \textrm{ in }  Q_{s_{min},T} \\
v_{\ve x}(0,t) = -\ve,  & \textrm{ for  } t \in (0,T) \\
v_{\ve}(x,0) = -\ve x & \textrm{ in } 0<x<b. \\
\end{array} \right. \]

\no Lemma \ref{weakmax} implies that $v_{\ve}$ attains its minimum on the parabolic boundary. We may estimate
\[
v_{\ve}(s_{min}(t),t) \geq  -\abs{u_{i}(s_{min}(t),t)}- \ve s_{min}(T)
\]
and since $v_{\ve}(x,0) = -\ve x \geq -\ve s_{min}(T)$ and $v_{\ve x}(0,t) < 0$ we obtain that
\[
\min_{Q_{s_{min},T}}v_{\ve} \geq -\abs{u_{i}(s_{min}(t),t)}- \ve s_{min}(T) \geq -M_{0}b^{\al-1}\max_{\tau\in [0,t]}\abs{s_{1}(\tau)-s_{2}(\tau)}- \ve s_{min}(T),
\]
thus
\[
\min_{Q_{s_{min},T}} v = \min_{Q_{s_{min},T}}(v_{\ve}+\ve x) \geq -M_{0}b^{\al-1}\max_{\tau\in [0,t]}\abs{s_{1}(\tau)-s_{2}(\tau)}-\ve s_{min}(T).
\]
Passing to the limit with $\ve$ we arrive at
\[
\min_{Q_{s_{min},T}} v \geq -M_{0}b^{\al-1}\max_{\tau\in [0,t]}\abs{s_{1}(\tau)-s_{2}(\tau)}.
\]
Combining the estimates for minimal and maximal value of $v$ we obtain
\[
\max_{Q_{s_{min},T}}\abs{v}\leq M_{0}b^{\al-1}\max_{\tau\in [0,t]}\abs{s_{1}(\tau)-s_{2}(\tau)}.
\]
Finally, we may estimate
\[
\abs{(Ps_{2})(t)-(Ps_{1})(t)} = \abs{\int_{0}^{s_{2}(t)}u_{2}(x,t)dx - \int_{0}^{s_{1}(t)}u_{1}(x,t)dx}
\]
\[
\leq \int_{0}^{s_{min}(t)}\abs{ v(x,t)} dx + \int_{s_{min}(t)}^{s_{max}(t)}u_{i}(x,t)dx
\]
\[
\leq s_{min}(t)\max_{Q_{s_{min},T}}\abs{v}+ (s_{max}(t)-s_{min}(t))^{2}M_{0}b^{\al-1}
\]
\[
\leq (b+MT)M_{0}b^{\al-1}\max_{\tau\in[0,t]}\abs{s_{1}(\tau) - s_{2}(\tau)} + M_{0}b^{\al-1}\max_{\tau\in[0,t]}\abs{s_{1}(\tau) - s_{2}(\tau)}^{2}.
\]
Thus $P$ is continuous and by the Schauder fixed point theorem there exist a fixed point of $P$. This way we proved the existence of the solution.
\end{proof}
In order to show that the obtained solution is unique we will prove the monotone dependence upon data.
\begin{theorem}\label{monot}
Let $(u^{i}, s_{i})$ be a solution to (\ref{Stefan}) given by Theorem \ref{final} corresponding to $b_{i}$ and $\uz^{i}$ for $i=1,2$. If $b_{1} \leq b_{2}$ and $\uz^{1} \leq \uz^{2}$, then for every $t\in[0,T]$ we have $s_{1}(t) \leq s_{2}(t)$.
\end{theorem}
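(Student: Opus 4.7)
The plan is to argue by contradiction, following the classical scheme for the Stefan problem (cf.~\cite{Andreucci}). Suppose there exists $t^{\ast} \in (0, T]$ with $s_{1}(t^{\ast}) > s_{2}(t^{\ast})$ and set $t_{0} := \inf\{t \in [0, T] : s_{1}(t) > s_{2}(t)\}$. Since $s_{1}(0) = b_{1} \leq b_{2} = s_{2}(0)$ and both functions are continuous, one has $t_{0} < t^{\ast}$, $s_{1}(t_{0}) = s_{2}(t_{0})$, $s_{1}(t) \leq s_{2}(t)$ for $t \in [0, t_{0}]$, and there exists a sequence $t_{n} \downarrow t_{0}$ satisfying $s_{1}(t_{n}) > s_{2}(t_{n})$. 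Because $s_1 \leq s_2$ on $[0,t_0]$, one has $Q_{s_1,t_0} \subseteq Q_{s_2,t_0}$, and the difference $w := u^{2} - u^{1}$ is well defined on $Q_{s_1,t_0}$.

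On this domain $w$ satisfies the linear problem
\[
w_{t} - \poch \da w = 0, \quad w_{x}(0,t) = 0, \quad w(s_{1}(t),t) = u^{2}(s_{1}(t),t) \geq 0, \quad w(x,0) = u_{0}^{2}(x) - u_{0}^{1}(x) \geq 0,
\]
where nonnegativity on the curve $x = s_1(t)$ follows from $u^2 \geq 0$ (Lemma \ref{daboundlem}) and the inclusion $(s_1(t),t) \in \overline{Q_{s_2,t_0}}$. Applying the weak extremum principle of Lemma \ref{weakmax} to $-w$ yields $w \geq 0$ on $\overline{Q_{s_1,t_0}}$, and combined with $u^{1}(s_{1}(t_{0}),t_{0}) = u^{2}(s_{2}(t_{0}),t_{0}) = 0$ this shows that $w$ attains its minimum value zero at the common endpoint $(s_{1}(t_{0}), t_{0}) = (s_2(t_0),t_0)$.

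The decisive step is to deduce the strict inequality $(\da w)(s_{1}(t_{0}), t_{0}) < 0$ via a Hopf-type argument modeled on Lemma \ref{nonposda}. In the non-degenerate case $w \not\equiv 0$ on $Q_{s_{1},t_{0}}$, one constructs a local barrier $\eta$ of the type used in Lemma \ref{nonposda} around a point where $w > 0$, applies the weak minimum principle to $w - \eta$ to propagate positivity up to time $t_{0}$, and then invokes the strict form of Lemma \ref{maxda} at $x_{0} = s_{1}(t_{0})$ to obtain the strict sign. The degenerate case $w \equiv 0$ on $Q_{s_{1}, t_{0}}$ is handled via the conservation identity (\ref{pfo}): equating its values at $t_{0}$ for $i=1$ and $i=2$ using $s_{1}(t_{0}) = s_{2}(t_{0})$ and $u^{1} \equiv u^{2}$ on the integration domain yields
\[
(b_{2} - b_{1}) + \int_{0}^{b_{1}}(u_{0}^{2} - u_{0}^{1})\,dx + \int_{b_{1}}^{b_{2}} u_{0}^{2}\,dx = 0,
\]
which, by the sign assumptions, forces $b_{1} = b_{2}$ and $u_{0}^{1} \equiv u_{0}^{2}$; in this case the conclusion $s_1 \leq s_2$ is obtained by applying the whole argument symmetrically to derive the opposite inequality and hence equality.

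Once the strict bound $(\da w)(s_1(t_0),t_0) < 0$ is secured, I combine the Stefan conditions $\dot{s}_{i}(t) = -(\da u^{i})(s_{i}(t), t)$ (valid almost everywhere) with the continuity of $t \mapsto (\da u^{i})(s_{i}(t), t)$ inherited from the space-time regularity of $u^{i}$ in Theorem \ref{existance}. Since at $t=t_0$ the quantity $\dot{s}_{2}(t) - \dot{s}_{1}(t) = (\da u^1)(s_1(t),t) - (\da u^2)(s_2(t),t)$ agrees with $-(\da w)(s_{1}(t_{0}), t_{0}) > 0$, by continuity it stays strictly positive on a small right-neighborhood of $t_{0}$; integrating on $(t_0,t)$ yields $s_{2}(t) - s_{1}(t) > 0$ for $t$ slightly larger than $t_{0}$, contradicting the existence of $t_{n} \downarrow t_0$ with $s_{1}(t_{n}) > s_{2}(t_{n})$. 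The main obstacle is the verification of the strict Hopf-type bound for $w$, since the barrier construction of Lemma \ref{nonposda} was tailored to the original system and must be adapted to accommodate the inhomogeneous boundary datum $w(s_{1}(t),t) = u^{2}(s_{1}(t),t)$, the non-strict initial inequality $w(x,0) \geq 0$, and the degenerate equal-data case.
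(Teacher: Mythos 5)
Your contradiction argument on the set $\{t: s_1(t)>s_2(t)\}$, the dichotomy ``either $w\equiv 0$ on $Q_{s_1,t_0}$ or $(\da w)(s_1(t_0),t_0)<0$'' via Lemma \ref{maxda} and the barrier of Lemma \ref{nonposda}, and the final comparison of $\dot{s}_1(t_0)$ with $\dot{s}_2(t_0)$ all match the paper's first step. The genuine gap is in your degenerate case. You correctly deduce from the conservation identity that $w\equiv 0$ on $Q_{s_1,t_0}$ forces $b_1=b_2$ and $\uz^1\equiv\uz^2$, but your proposed resolution --- ``applying the whole argument symmetrically to derive the opposite inequality and hence equality'' --- is circular. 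With identical data the symmetric run of the argument lands in exactly the same degenerate alternative ($w\equiv 0$ again follows, since you already have $w\ge 0$ and $w\le 0$ from the two runs), so neither direction ever produces the strict Hopf inequality needed to close the contradiction. What you are implicitly invoking in the equal-data case is precisely the uniqueness of the solution, which in this paper is a \emph{corollary} of Theorem \ref{monot}, not an available input; two solutions with the same data could in principle branch at $t_0$ and nothing in your argument excludes this.

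The paper breaks this circularity differently. It first proves the monotonicity only under the \emph{strict} hypotheses $b_1<b_2$ and $\uz^1\not\equiv\uz^2$ on $[0,b_1]$, where the degenerate alternative $w\equiv 0$ is immediately contradicted by the initial data, so the Hopf-type step always applies. The general case $b_1\le b_2$, $\uz^1\le\uz^2$ is then reduced to the strict case by an approximation: one constructs $\uzde\ge\uzd$ supported in $[0,b_2+\delta]$ with $\norm{\uzde-\uzd}_{L^\infty}=\delta$, obtains $s_1\le\sde$ and $s_2\le\sde$ from the strict case, and then uses the conservation identity (\ref{pfo}) together with the sign information from Lemma \ref{weakmax} to show $\sde(t)\le s_2(t)+\delta+b_2\delta+\delta^2/2$, after which $\delta\to 0$ gives $s_1\le s_2$. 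This quantitative stability step, which you do not have, is the essential ingredient; without it (or some substitute argument establishing uniqueness for equal data) your proof does not close. I would also note, as a minor point, that to conclude $w\ge 0$ from Lemma \ref{weakmax} you must rule out a negative minimum on the Neumann part $\{x=0\}$ of the parabolic boundary, which requires the $\pm\ve x$ perturbation used elsewhere in the paper rather than a direct citation of the lemma.
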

\begin{proof}
The proof will be given in two steps. \\
1. Let us firstly discuss the case $b_{1} < b_{2}$, $\uzj \leq \uzd$ and $\uzj \not\equiv \uzd$ on $[0,b_{1}]$. We will proceed by contradiction. Let us assume that there exists $t\in[0,T]$ such that $\sj(t) > \sd(t)$. We denote $\tz = \inf\{t\in[0,T]: \sj(t) = \sd(t)\}$. Then by virtue of weak minimum principle (Lemma \ref{weakmax}) function $v = \ud-\uj$ is nonnegative in $Q_{\sj,\tz}$ and $v(\sj(\tz),\tz) = 0$. Thus, from Lemma \ref{maxda} and the proof of Lemma \ref{nonposda} we infer that either $v \equiv 0$ on $Q_{\sj,\tz}$ or $(\da v)(s(\tz),\tz) < 0$. The first possibility is a contradiction with $\uzj \not\equiv  \uzd$. Hence,
\[
0>(\da v)(s(\tz),\tz) = (\da \ud)(s(\tz),\tz) - (\da \uj)(s(\tz),\tz) = \dot{\sj}(\tz) - \dot{\sd}(\tz)
\]
and we obtain the contradiction with the definition of $\tz$. Thus, we obtain that if $b_{1} < b_{2}$, $\uzj \leq \uzd$ and $\uzj \not\equiv  \uzd$ on $[0,b_{1}]$, then $\sj(t) \leq \sd(t)$ for every $t \in [0,T]$.\\
2. In the general case, that is $b_{1} \leq b_{2} $ and $\uzj \leq \uzd$ we proceed as follows. We fix $\delta > 0$ and denote by $\uzde$ a smooth function defined on $[0,b_{2}+\delta]$ in such a way that $\uzde \equiv 0$ on $[b_{2}+\delta/2,b_{2}+\delta]$, $\uzde \geq \uzd$ on $[0,b_{2}]$ and $\max_{x\in[0,b_{2}]}(\uzde(x)-\uzd(x)) = \delta$, $\max_{x\in[b_{2},b_{2}+\delta/2]}\uzde(x) \leq \delta$. Then, we denote by $(\ude, \sde)$ the solution to (\ref{Stefan}) given by Theorem \ref{final} corresponding to $\uzde$. By the first step of the proof, we have $\sj \leq \sde$ and $\sd \leq \sde$. On the other hand performing calculations as in (\ref{pfo}) we have
\[
\sde(t) = b_{2}+\delta + \izt \dot{\sde}(\tau)d\tau = b_{2}+\delta - \izt (\da \ude)(\sde(\tau),\tau)d\tau
\]
\[
 = b_{2}+\delta +\int_{0}^{b_{2}+\delta}\uzde(x)dx - \int_{0}^{\sde(t)}\ude(x,t)dx
\]
and
\[
s_{2}(t) = b_{2} +\int_{0}^{b_{2}}\uzd(x)dx - \int_{0}^{\sd(t)}\ud(x,t)dx.
\]
Subtracting these identities we obtain
\[
\sde(t) - \sd(t) = \delta +\int_{0}^{b_{2}+\delta}\uzde(x)dx - \int_{0}^{b_{2}}\uzd(x)dx - \int_{0}^{\sde(t)}\ude(x,t)dx +  \int_{0}^{\sd(t)}\ud(x,t)dx
\]
\[
= \delta + \int_{0}^{b_{2}}\uzde(x) - \uzd(x)dx + \int_{b_{2}}^{b_{2}+\frac{\delta}{2}}\uzde(x)dx- \int_{\sd(t)}^{\sde(t)}\ude(x,t)dx - \int_{0}^{\sd(t)}[\ude(x,t) - \ud(x,t)]dx.
\]
The last two integrals are positive due to Lemma \ref{weakmax}. Making use of $\norm{\uzde-\uzd}_{L^{\infty}(0,b_{2})} = \delta$ we obtain
\[
\sj(t) \leq \sde(t) \leq \sd(t) + \delta + b_{2}\delta + \frac{\delta}{2}\delta \m{ for every }t \in [0,T].
\]
Passing to the limit with $\delta$ we obtain that $\sj(t) \leq \sd(t)$ for every $t \in [0,T]$.
\end{proof}
\begin{coro}
  From Theorem \ref{monot} it follows that the solution $(u,s)$ to problem (\ref{Stefan}) given by Theorem \ref{final} is unique.
\end{coro}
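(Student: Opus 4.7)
The plan is to read the corollary as a direct consequence of Theorem~\ref{monot} applied symmetrically, combined with the uniqueness for the linear non-cylindrical problem established in Theorem~\ref{existance}. Concretely, suppose $(u^{1},s_{1})$ and $(u^{2},s_{2})$ are two solutions to (\ref{Stefan}) corresponding to the same data $b$ and $\uz$, both produced by Theorem~\ref{final}. I want to conclude $s_{1}\equiv s_{2}$ and then $u^{1}\equiv u^{2}$.

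First I would observe that the hypothesis of Theorem~\ref{monot} is satisfied in both orientations: take $b_{1}=b_{2}=b$ and $\uz^{1}=\uz^{2}=\uz$, so trivially $b_{1}\leq b_{2}$ and $\uz^{1}\leq\uz^{2}$, which yields $s_{1}(t)\leq s_{2}(t)$ for every $t\in[0,T]$. Exchanging the roles of the two solutions gives the reverse inequality $s_{2}(t)\leq s_{1}(t)$. Consequently $s_{1}\equiv s_{2}$ on $[0,T]$, and I will denote this common function by $s$; note that $s$ clearly satisfies the hypothesis (\ref{zals}) since both $s_{i}$ do by Theorem~\ref{final}.

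Once the free boundary is identified, each $u^{i}$ is a solution of the linear non-cylindrical problem (\ref{niech}) on the same domain $Q_{s,T}$ with the same initial datum $\uz$ and the same boundary conditions. By the uniqueness part of Theorem~\ref{existance} there is only one such solution in the relevant regularity class, so $u^{1}\equiv u^{2}$ on $\overline{Q_{s,T}}$. This completes the argument.

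There is essentially no obstacle here: the only subtle point is that Theorem~\ref{monot} is stated under the hypothesis $b_{1}\leq b_{2}$ with no strict inequality required, so it applies directly to the equal case and can be used symmetrically. The heavy lifting---the strong Hopf-type inequality $(\da u)(s(t),t)<0$ used in Lemma~\ref{nonposda} and the extremum machinery used to derive the monotone dependence---has already been carried out, so the corollary is genuinely a one-line consequence.
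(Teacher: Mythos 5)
Your argument is correct and is exactly the intended one: the paper states the corollary without an explicit proof, and the implicit reasoning is precisely the symmetric application of Theorem~\ref{monot} with $b_{1}=b_{2}=b$, $\uz^{1}=\uz^{2}=\uz$ to force $s_{1}\equiv s_{2}$, followed by the uniqueness of the solution to (\ref{niech}) for a fixed $s$ from Theorem~\ref{existance} to conclude $u^{1}\equiv u^{2}$. Your observation that the hypothesis of Theorem~\ref{monot} admits equality (so it can be used in both orientations) is the only point that needs checking, and it holds.
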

\section{Acknowledgments}
The author is grateful to Prof. Piotr Rybka and Dr. Adam Kubica for their valuable remarks.
The author was partly supported by National Sciences Center, Poland through 2017/26/M/ST1/00700 Grant.

\end{document}